\newcommand{\U}{\mathcal{U}}
\newcommand{\PP}{\mathbf{P}}
\newcommand{\Pstd}{\PP^{(0)}}
\newcommand{\Pinfty}{\PP^\infty}
\newcommand{\hh}{h_1}
\newcommand{\vh}{h_2}
\newcommand{\R}{\mathbb{R}}
\newcommand{\weaklim}{\textnormal{weak}^*}
\newcommand{\C}{\mathcal{C}}
\newcommand{\N}{\mathbb{N}}
\newcommand{\Z}{\mathbb{Z}}
\newcommand{\vol}{\normalfont \text{Vol}}
\newcommand{\ups}{\upsilon}
\newcommand{\GL}{\normalfont \text{GL}}
\newcommand{\ka}
{\kappa}
\newcommand{\al}
{\alpha}
\newcommand{\be}
{\beta}
\newcommand{\id}
{\textup{id}}
\newcommand{\SL}{\normalfont \text{SL}}
\newcommand{\SO}{\normalfont \text{SO}}
\newcommand{\PO}{\textup{PO}}
\newcommand{\Ad}{\normalfont \text{Ad}}
\newcommand{\G}{\mathcal{G}}
\newcommand{\Gr}{\text{Gr}}
\newcommand{\e}{\epsilon}
\newcommand{\de}{\delta}
\newcommand{\De}{\Delta}
\newcommand{\La}{\Lambda}
\newcommand{\Ga}{\Gamma}
\newcommand{\ga}{\gamma}
\newcommand{\PGL}{\text{PGL}}
\newcommand{\shape}{\text{s}}
\newcommand{\tr}{\intercal}
\newcommand{\HH}{\mathcal{H}}
\newcommand{\D}{\mathcal{D}}
\newcommand{\Span}{\normalfont \text{Span}}
\newcommand{\diag}{\normalfont \text{diag}}
\newcommand{\rank}{\normalfont \text{rank}}
\newcommand{\stab}{\normalfont \text{Stab}}
\newcommand{\Lie}{\textup{Lie}}
\newtheorem{theorem}{Theorem}[section]
\newtheorem{lemma}[theorem]{Lemma}
\newtheorem{proposition}[theorem]{Proposition}
\newtheorem{corollary}[theorem]{Corollary}
\newtheorem*{example*}{Example}
\newtheorem*{conjecture*}{Conjecture}
\theoremstyle{definition}
\newtheorem{definition}[theorem]{Definition}
\theoremstyle{remark}
\newtheorem{remark}[theorem]{Remark}
\newtheorem*{remark*}{Remark}
\numberwithin{equation}{section}
\begin{document}

\title[Limit distributions for $\SO(n,1)$ actions on $k$-lattices in $\R^{n+1}$]{Limit distributions for $\SO(n,1)$ actions on $k$-lattices in $\R^{n+1}$}
\author{Michael Bersudsky}\author{Nimish A. Shah}
\address{The Ohio State University, Columbus, OH 43210, USA}
\email{bersudsky87@gmail.com, shah@math.osu.edu}

\begin{abstract}
We study the asymptotic distribution of norm ball averages along orbits of a lattice $\Gamma \subseteq \SO(n,1)$ acting on the moduli space of pairs of orthogonal discrete subgroups of $\R^{n+1}$ up to homothety. Our main result shows that, except for   special $2$-lattices in $\R^3$ lying in hyperplanes tangent to the light cone, these measures converge to an explicit semi-invariant probability measure supported on the space of homothety classes of pairs of orthogonal lattices tangent to the light cone. 

Our main motivation is a conjecture of Sargent and Shapira~\cite{Sargent2017DynamicsOT}, which is resolved as a special case of our general result.
\end{abstract}

\maketitle
\tableofcontents
\section{Introduction}
\thispagestyle{empty} 
The moduli space of $k$-dimensional lattices of $\R^n$ for $k<n$ has attracted considerable attention in recent years. This subject arises naturally in number theory with its origins tracing back to the works of Roelcke~\cite{Roelcke56}, Maass~\cite{Maass59}, and later Schmidt~\cite{Schmidt98}, who studied  the distribution of discrete subgroups of $\Z^n$ within large covolume balls — a natural generalization of counting integral vectors in expanding Euclidean balls. More recent interest comes from \cite{emss} and \cite{AES_lat}, which refined the aforementioned works by applying techniques of homogeneous dynamics.

Beyond its number-theoretic motivations, the moduli space of $k$-lattices is a natural object of study in homogeneous dynamics. This space can be realized as a homogeneous space of $\SL(n,\R)$ and admits the structure of a fiber bundle over the Grassmannian of $k$-planes in $\R^n$, with fibers isomorphic to $\SL(k,\R)/\SL(k,\Z)$. Notably, the action of subgroups of $\SL(n,\R)$ on these two spaces exhibits distinct dynamical behaviors. For instance, the action of a one-parameter diagonal subgroup $\{a(t)\}_{t\in\R} \subseteq \SL(n,\R)$ is ergodic and mixing on $\SL(n,\R)/\SL(n,\Z)$, whereas on the Grassmannian, the same flow is  contracting: points either remain fixed or converge to a limit. A concrete example explored in this paper is the action of the diagonal subgroup in \(\SO(n,1)\) on the space of \(k\)-lattices (see \Cref{lem:difference of gs and as}). In this case, all \(k\)-dimensional subspaces contract toward a specific limit, and the orbit of almost every \(k\)-lattice accumulates densely on the collection of \(k\)-lattices contained in this limiting subspace. 

Our main motivation  is the following  conjecture stated in \cite{Sargent2017DynamicsOT}.  Let \(X_2 \cong \PGL(2,\R)/\PGL(2,\Z)\) denote the space of full-rank lattices in \(\R^2\) up to scaling. Given a rank-\(2\) discrete subgroup \(\Lambda\subseteq \R^3\), its \emph{shape} is the equivalence class \(\shape(\Lambda)\in \PO(2)\backslash X_2\) of \(\Lambda\) under scaling and orthogonal transformations of \(\R^3\). Consider \( H_\Z:=\{(x,y,z)\in\Z^3:2xz-y^2=1\}\), and for each $v\in H_\Z$, let \[\La_v:=v^\perp\cap \Z^3,\] where $v^\perp$ is the orthogonal complement of $v$. The conjecture is that the collection $\{\shape(\La_v):v\in H_\Z\}$ is dense in $\PO(2)\backslash X_2$. 

As noted in \cite{Sargent2017DynamicsOT}, the motivation for this conjecture is related to  Furstenberg's approach to resolve a long-standing conjecture on the well-approximability of cubic irrationals. 
The technique of \cite{Sargent2017DynamicsOT} relies on a stationary measure classification for random walks of a Zariski dense subgroup \(\Gamma \subseteq \SO(2,1)\) acting on the moduli space of \(2\)-lattices in \(\R^3\), building on the techniques of Benoist and Quint, see \cite{BenoistQuintBook}. However, an incomplete stationary measure classification and certain unexpected stationary measures called \(k\)-extensions  prevented a full resolution of the conjecture in that work\footnote{The conjecture itself was resolved several years ago by Uri Bader and Uri Shapira by a different method. Their work, even though announced publicly in various talks, is still unpublished. As explained to us in private communication by Uri Shapira, their proof goes through establishing a refined classification of stationary measures (similar to \cite{gorodnik2022stationary}) coupled with an instability result which allows to upgrade the classification to understanding random walks. We stress that the Bader-Shapira results hold only for 2-lattices in $\R^3$, while this paper considers joint distributions of pairs of orthogonal lattices in higher dimensions.}. We also refer the reader to the more recent work \cite{gorodnik2022stationary} studying random walks of lattices in representations of $\SL(2,\R)$ acting on general hybrids of projective and lattice type quotients and proving stronger stationary measure classification.

In this paper, we take a direct approach to resolving the conjecture by analyzing the limiting distribution of orbits of a lattice \(\Gamma \subseteq \SO(n,1)\) in the moduli space of \(k\)-lattices, as the norm of the acting matrices grows. In fact, the conjecture follows as a special case of our general result, \Cref{thm:non-degen orb}. Additionally, we describe the limiting distribution of the aforementioned exceptional orbits in \Cref{thm:degen orbits low dim}, and in higher dimensions we obtain the limiting joint distribution of orbits of pairs of orthogonal complementary lattices, see \Cref{thm:non-degen orb} and \Cref{thm:degen orbits higher dim}.
\subsection*{Concrete definitions and results}
For  natural numbers $r\leq n$, a \textit{$r$-lattice in \(\R^{n+1}\)} is a discrete subgroup of the form 
$$\La=\Span_\Z\{v_1,...,v_r\},$$where $v_1,...,v_r\in \R^{n+1}$ are linearly independent. The homothety class \([\La]\) of an $r$-lattice $\La$ is defined as the equivalence class with respect to the equivalence relation $\La\sim\La'\iff \La=\al \La',$ for $\al\in \R^\times.$

We consider the moduli space $X_{r,n+1}$ given by 
$$\{([\La_1],[\La_2]):\text{$\La_1$ and $\La_2$ are respectively $r$ and $(n+1-r)$-lattices in $\R^{n+1}$ such that } \La_1\perp\La_2\},$$
where $\perp$ represents the  Euclidean orthogonality. Note that if $r=1$ or $r=n$, the pair notation is redundant: one of the lattices is one dimensional, and a one-dimensional lattice is unique up to homothety. 

Induced by the usual linear representation, the matrix group $\SL(n+1,\R)
$ 
acts on $X_{r,n+1}$ as:
\begin{equation}\label{eq:def of action on X}
    g\cdot ([\La_1],[\La_2]):=([g\La_1],[g^*\La_2]),\;\;g\in \SL(n+1,\R),\;\;([\La_1],[\La_2])\in X_{r,n+1},
\end{equation}
where \(g^*:=(g^\tr)^{-1}\).

Consider the quadratic form \(q(x_1,...,x_{n+1}):=x_1^2+\cdots+x_n^2-x_{n+1}^2\), and denote by \(G:=\SO(n,1)^\circ\) the identity component of the special orthogonal group of $q$.
Let $\Ga\subseteq G$ be a lattice, let  $\|\cdot\|$ be a  norm on $M_{n+1}(\R)$, and denote   
$$\Ga_T:=\{\ga\in\Ga:\|\ga\|\leq T\}.$$ 
Our goal is to analyze the limiting distribution of the $\Ga$-norm ball averages on $X_{r,n+1}$:
\begin{equation}\label{eq:def avgs}
   \frac{1}{\#\Ga_T}\sum_{\ga\in\Ga_T}f(\ga\cdot([\La_1],[\La_2])),\,\forall f\in C_c(X_{r,n+1}),\text{ as $T\to \infty$}, 
\end{equation}
for any $([\La_1],[\La_2])\in X_{r,n+1}$. Our main results below show that the above averages converge to an explicit measure which depends on $([\La_1],[\La_2])$ and on the chosen norm. The limiting measures are concentrated on the collection of pairs of lattices lying in subspaces tangent to the light cone $\{\mathbf{x}\in\R^{n+1}:q(\mathbf{x})=0\}$, and we now describe the aforementioned subspaces in more detail.  

We say that a $r$-dimensional vector subspace $V$ of $\R^{n+1}$ is degenerate, if the restricted quadratic form $q|_V$ is degenerate. A subspace $V$ is degenerate if and only if $V$ is tangent to the light cone, and for our particular quadratic form $q$,  its Euclidean orthogonal complement $V^\perp$ is also degenerate. Let $\Gr_r(\R^{n+1})$ denote the Grassmannian of $r$-dimensional subspaces of $\R^{n+1}$. We  define 
\begin{equation}
    \D_{r,n+1}:=\{(P_1,P_2)\in \Gr_r(\R^{n+1})\times \Gr_{n+1-r}(\R^{n+1}):P_1 \text{ is degenerate and }P_2=P_1^\perp \}.
\end{equation}
Let $\mathbf{e}_1,...,\mathbf{e}_{n+1}$ represent the canonical basis vectors of $\R^{n
+1}$.  For $r\leq n$, we reserve a special notation $\Pinfty=(P_1^\infty,P_2^\infty)$ for the following pair of  degenerate subspaces:
\begin{gather*}
P_1^\infty:=\Span_\R\{\mathbf{e}_1+\mathbf{e}_{n+1},\mathbf{e}_2,...,\mathbf{e}_r\}\text{, and }\\
P_2^\infty:=(P_1^\infty)^\perp=
\begin{cases} 
\Span_\R\{\mathbf{e}_1-\mathbf{e}_{n+1},\mathbf{e}_{r+1},...,\mathbf{e}_n\},& \text{if } r\leq n-1 \\
\Span_\R\{\mathbf{e}_1-\mathbf{e}_{n+1}\}, & \text{if } r=n.
\end{cases}
\end{gather*}
Consider the maximal compact subgroup of $\SO(n,1)^0$ given by:
\begin{equation}\label{eq:the maximal cpct}
    K:=\begin{bsmallmatrix}
    \SO(n)&\\
    &1
\end{bsmallmatrix},
\end{equation}
where $\SO(n)$ is the special orthogonal group of the form $x_1 ^2+\dots+x_n^2.$ The subgroup $K$ acts transitively on  $\D_{r,n+1}$, so that \( \D_{r,n+1}\cong K/K_{\Pinfty},\) where $K_{\PP^\infty}=\stab_K(P_1^\infty,P_2^\infty).$ 

We will call a $r$-lattice degenerate, if it is contained in a degenerate subspace, and we define 
\begin{equation}
    \C_{r,n+1}:=\{([\La_1],[\La_2])\in X_{r,n+1}:\La_1\text{ is degenerate}\}.
\end{equation}

The collection \(\C_{r,n+1}\) has the following natural fiber-bundle structure over \(\D_{r,n+1}\). Consider the natural projection \( \pi:X_{r,n+1}\to \Gr_{r}(\R^{n+1})\times \Gr_{n+1-r}(\R^{n+1}),\) defined as:
$$\pi([\La_1],[\La_2]):=(\Span_{\R}(\La_1),\Span_\R(\La_2)).$$
For $k\in \N$ we let 
\[  X_k:=\PGL(k,\R)/\PGL(k,\Z)\cong \SL(k,\R)/\SL(k,\Z),\]
which is identified with the space of (full rank) unimodular lattices of $\R^k$. Then, the fibers of \(\pi\) are isomorphic to $X_r\times X_{n+1-r};$  namely, $\C_{r,n+1}$ is a fiber-bundle over $\D_{r,n+1}$, where the fibers are isomorphic to \(X_r\times X_{n+1-r}.\) For $\PP=(P_1,P_2)\in\D_{r,n+1}$, we denote by $X_{\PP}$ the fiber over $\PP$, i.e.:
\begin{equation}\label{eq:def of fibers of pairs of lats}
    X_\PP=\{([\La_1],[\La_2])\in X_{r,n+1}:\La_1\subseteq P_1,~\La_2\subseteq P_2\}.
\end{equation}

We equip $X_{\PP}$ with the $\PGL(r,\R)\times\PGL(n+1-r,\R)$-invariant probability measure $\mu_\PP$ as follows. Choosing a basis of $P_1$ and $P_2$, we get a representation of $\PGL(r,\R)\times\PGL(n+1-r,\R)$ on $P_1\times P_2$, which in turn identifies $X_{\PP}$ with $X_r\times X_{n+1-r}$ as a $\PGL(r,\R)\times\PGL(n+1-r,\R)$-space, and it admits a unique $\PGL(r,\R)\times\PGL(n+1-r,\R)$-invariant probability measure. This measure is independent of the choice of the basis for the subspaces $P_1$ and $P_2$.

\subsection*{Orbits of nondegenerate orthogonal lattices}  
Let $(\La_1,\La_2)$ be a nondegenerate pair. Without loss of generality, throughout the paper, we will assume that $\La_1$ is such that $q|_{\Span_{\R}(\La_1)}$ is positive definite. This is convenient for stating the results in a uniform manner, and there is no loss of generality in that choice. 

The limiting measure of \eqref{eq:def avgs} for a non-degenerate pair will be obtained by integrating the measures $\mu_\PP$ with respect to a smooth measure on $\D_{r,n+1}\cong K/K_{\Pinfty}$. We will now describe the density on $\D_{r,n+1}$ of the aforementioned measure, which depends on the starting point and the norm. 

Consider the subgroup
\begin{equation*}
    H=\begin{bmatrix}
        \SO(r)&\mathbf{0}\\
        \mathbf{0}& \SO(n-r,1)^\circ
    \end{bmatrix} \subseteq G,
\end{equation*}
and let $dh$ be a Haar measure on $H$. Let $dk$ and $d\kappa$ denote the Haar probability measures on $K$ and $K_{\Pinfty}$, respectively.

For a positive-definite subspace $P_0\in \Gr_r(\R^{n+1})$, we define the density function $w_{P_0}\in C(\D_{r,n+1})$  as follows. By Witt's theorem, there exists a $g_0\in G$ such that \(P_0=g_0\cdot\Span_\R\{\mathbf{e}_1,...,\mathbf{e}_r\}\). We define for all $k_0\in K$,
\begin{gather}\label{eq:wP0}
w_{P_0}(k_0K_{\Pinfty}):=
\begin{cases}
\frac{\int_{K_{\Pinfty}}\int_{H}{\|k_0\kappa a(\infty) hg_0^{-1}\|^{-(n-1)}} dh d\kappa}
{\int_K\int_{H}{\|ka(\infty)hg_0^{-1}\|^{-(n-1)}}dhdk}, & \text{if } r>1,\\[10pt]
\frac{\int_{K_{\Pinfty}}\int_{H}\bigl({\|k_0\kappa a(\infty) hg_0^{-1}\|^{-(n-1)}}+{\|k_0\kappa a(-\infty) hg_0^{-1}\|^{-(n-1)}}\bigr) dh d\kappa}
{\int_K\int_{H}\bigl({\|k a(\infty) hg_0^{-1}\|^{-(n-1)}}+{\|k a(-\infty) hg_0^{-1}\|^{-(n-1)}}\bigr)dhdk}, &\text{if } r=1,
\end{cases} \\[7pt]
   \text{where} \quad
    a(\infty):=\begin{bsmallmatrix}
        \frac{1}{2}&\mathbf{0}&\frac{1}{2}\\
        \mathbf{0}&\mathbf{0}&\mathbf{0}\\ 
        \frac{1}{2}&\mathbf{0}&\frac{1}{2}
    \end{bsmallmatrix} \quad \text{ and } \quad
a(-\infty):=\begin{bsmallmatrix}
        \frac{1}{2}&\mathbf{0}&-\frac{1}{2}\\
        \mathbf{0}&\mathbf{0}&\mathbf{0}\\
        -\frac{1}{2}&\mathbf{0}&\frac{1}{2}
    \end{bsmallmatrix}\label{eq:DefOfainftyIntro}.
\end{gather}

To see that the above expressions are well defined, we refer to~\Cref{lem:integral of Ah inverse estimate} proving convergence of the integrals. In addition, the expression defining $w_{P_0}$  is independent of the choice of $g_0$ because $H$ is the subgroup of $G$ preserving $\Span_\R\{\mathbf{e}_1,...,\mathbf{e}_r\}$, and $dh$ is right-$H$ invariant. 

Observe that  if $\|\cdot\|$ is left $K$-invariant, then $w_{P_0}$ is the constant function $1$, for every $P_0$.

\begin{theorem}\label{thm:non-degen orb}
    Let $\Ga\subseteq G$ be a lattice, and let $([\La_1],[\La_2])\in X_{r,n+1}$ where $\Span_\R(\La_1)$ is positive definite. We denote $P_0:=\Span_{\R}(\La_1).$ Then, for all $f\in C_c(X_{r,n+1}),$
    \begin{equation}
        \lim_{T\to\infty}\frac{1}{\#\Ga_T}\sum_{\ga\in \Ga_T}f(\ga\cdot ([\La_1],[\La_2]))=\int_{K/K_{\Pinfty}}\mu_{k\cdot \Pinfty}(f) w_{P_0}(kK_{\Pinfty})d(kK_{\Pinfty}),
    \end{equation}
 where $d(kK_{\Pinfty})$ is the $K$-invariant probability on $K/K_{\Pinfty}$.
\end{theorem}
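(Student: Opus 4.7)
Following the Eskin--McMullen strategy, the first step is to reduce the orbit average to a volume average: by a standard thickening argument combined with mixing of the left $G$-action on $\Ga \backslash G$, one shows that for all $f \in C_c(X_{r,n+1})$,
\[
\frac{1}{\#\Ga_T}\sum_{\ga\in\Ga_T} f(\ga x_0) \;=\; \frac{1}{\vol(B_T)}\int_{B_T} f(g x_0)\,dg + o(1),
\]
where $B_T:=\{g\in G:\|g\|\le T\}$. This reduction also uses the Duke--Rudnick--Sarnak asymptotic $\#\Ga_T \sim \vol(B_T)/\vol(\Ga \backslash G)$, which applies here because $\stab_G(x_0)$ is a lattice in the conjugate $g_0 H g_0^{-1}$ of the symmetric subgroup $H$: this follows from Borel--Harish-Chandra applied to $\La_2$.

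For the volume integral, I would substitute $g' = g g_0$ to standardize the base point to $y_0 := g_0^{-1} x_0$, whose $\La_1$-subspace is $\Span_\R\{\mathbf{e}_1,\dots,\mathbf{e}_r\}$, and then parameterize $g'$ via the generalized Cartan decomposition for the symmetric pair $(G,H)$: $g' = k\,a(s)\,h$ with $k \in K$, $s \in \R$, $h \in H$. Here the Haar measure reads $dg' = J(s)\,dk\,ds\,dh$ with $J(s) \sim c_0\,e^{(n-1)|s|}$ as $|s|\to\infty$, and the norm satisfies $\|k a(s) h g_0^{-1}\| \sim e^{|s|}\,\|k a(\pm\infty) h g_0^{-1}\|$. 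Integrating $J(s)\,ds$ up to the threshold $|s| \leq \log T - \log \|k a(\pm\infty) h g_0^{-1}\|$ imposed by $\|g'g_0^{-1}\| \leq T$ yields precisely the factor $\|k a(\pm\infty) h g_0^{-1}\|^{-(n-1)}$ appearing in $w_{P_0}$. For $r > 1$ there exists a Weyl-type element $k_- \in \SO(r) \subset K \cap H$ sending $\mathbf{e}_1$ to $-\mathbf{e}_1$ and therefore conjugating $a(\infty)$ to $a(-\infty)$; an obvious substitution in the $K \times H$ integration then shows that the $a(\infty)$ and $a(-\infty)$ contributions are equal, so one may retain only the $a(\infty)$ term. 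For $r=1$, the analogous $k_-$ fails to lie in $K \cap H = \SO(n-1)$ (which fixes $\mathbf{e}_1$), so the two endpoints give genuinely distinct contributions, explaining the appearance of both $a(\infty)$ and $a(-\infty)$ in that case.

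The main obstacle is the fiber equidistribution. As $s \to \infty$, the subspaces of $g' y_0$ converge to $k \cdot \Pinfty \in \D_{r,n+1}$ independently of $h$, since $h \in H$ stabilizes $\Span_\R\{\mathbf{e}_1,\dots,\mathbf{e}_r\}$, while the fiber-shape pair $(a(s) h_1 \La_1,\,a(s)^* h_2^* \La_2)$ (after the $g_0$-normalization) evolves by a one-parameter diagonal subgroup of $\PGL(r,\R)\times\PGL(n+1-r,\R)$. I would show that for each fixed direction $k \in K$, as $s\to\infty$ and $h$ varies over $H$ with the above weight, this pair equidistributes to $\mu_{k\cdot\Pinfty}$ on $X_{k\cdot\Pinfty} \cong X_r \times X_{n+1-r}$; I would attack this via equidistribution of expanding translates of $H$-orbits on $\Ga \backslash G$ combined with a non-divergence argument in the fibers, in the spirit of Eskin--Margulis--Mozes and Mozes--Shah, using that $H$ is a symmetric subgroup of $G$ and that the relevant diagonal flows are mixing on $X_r$ and $X_{n+1-r}$. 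Assembling the pieces, the numerator of $w_{P_0}(k_0 K_{\Pinfty})$ corresponds to restricting the $k$-integration to the preimage of $k_0 K_{\Pinfty}$ in $K$, the denominator normalizes by $\vol(B_T)$, and the factor $\mu_{k\cdot\Pinfty}(f)$ remains from the fiber equidistribution, producing the stated formula.
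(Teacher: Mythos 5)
Your overall architecture (thickening, generalized Cartan decomposition $g=ka(s)h$, the threshold computation producing $\|ka(\pm\infty)hg_0^{-1}\|^{-(n-1)}$, and the Weyl-element explanation of why $r=1$ retains both $a(\infty)$ and $a(-\infty)$) matches the paper's Sections 2--3 and 6, and that part of your outline is correct. But there is a genuine gap at the step you yourself flag as the main obstacle. The theorem requires identifying the \emph{joint} limit of $(a(s)h\,g_0\Ga,\ d_1(s)h_1y_1,\ d_2(s)h_2^*y_2)$ on $G/\Ga\times X_r\times X_{n+1-r}$ as the full product measure; your proposal splits this into ``equidistribution of expanding $H$-translates on $\Ga\backslash G$'' plus ``non-divergence in the fibers,'' but non-divergence only prevents escape of mass and does not identify the limit, nor does mixing of the diagonal flows on $X_r$ and $X_{n+1-r}$ apply to a measure supported on a single translated $H$-orbit. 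One must rule out correlations between the three factors. The paper does this by showing the limit is invariant under a unipotent whose projection to each factor acts ergodically (Lemmas 5.2 and 5.5), invoking Ratner, and then checking that no proper subgroup of $\SO(n,1)^\circ\times\PGL(r,\R)\times\PGL(n+1-r,\R)$ with closed orbit surjects onto every factor. For $n\geq3$ this last step is the absence of surjections $\SO(n,1)^\circ\to\PGL(i+1,\R)\times\PGL(j+1,\R)$; for $n=2$ it \emph{fails} because of the isogeny $\SL(2,\R)\to\SO(2,1)^\circ$, and the paper must instead invoke Lei Yang's theorem on expanding translates of the curve $(h(t/2),h(t))$ in a product of two $\SL(2,\R)$-quotients (Proposition 5.7 and Lemma 5.9). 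Your sketch supplies no substitute for this input, and the $n=2$, $r=1$ case is precisely the one needed for the Sargent--Shapira conjecture.

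Two smaller points. First, your justification of $\#\Ga_T\sim\vol(B_T)/\vol(\Ga\backslash G)$ via Borel--Harish-Chandra applied to $\La_2$ is wrong: $\Ga$ is an arbitrary lattice in $G$ with no relation to $\La_2$, and $\stab_G(x_0)$ is generically \emph{not} a lattice in $g_0Hg_0^{-1}$ (the homothety stabilizer of a generic $[\La_2]$ in $\SO(n-r,1)^\circ$ is trivial). The asymptotic you need is plain lattice-point counting in the group $G$ itself, which is assumption \eqref{eq:counting estimate of lattice pts in gp} and holds by Gorodnik--Nevo. Second, the reduction in your first display to the volume average over $X$ alone is not ``standard'' independently of the equidistribution step: the thickening produces the joint integral $\int_{G_T}\varphi_\de(g\Ga)f(gx_0)\,dg$, and discarding the $G/\Ga$-coordinate is legitimate only once one knows the joint limit is a product in that coordinate (this is exactly why Proposition \ref{prop:reduction general method} is phrased via $\lim_{\de\to0}\nu(\varphi_\de f)$, and why the conclusion changes in the special case of Theorem \ref{thm:degen orbits low dim}).
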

We will now verify the conjecture of Sargent--Shapira~\emph{\cite{Sargent2017DynamicsOT}} affirmatively using \Cref{thm:non-degen orb}.
\begin{proof}[Proof of the Sargent--Shapira conjecture] Since for a $2$-lattice $\La\subseteq \R^3$, the homothety class of any lattice in its orthogonal line is unique, the pair notation for $X_{2,3}$ is redundant; that is, we write \[X_{2,3}=\{[\La]:\La\textup{ is a \(2\)-lattice in $\R^3$}\}.\]
For a plane \(P\in \Gr_2(\R^3)\), let $X_P=\{[\La]\in X_{2,3}: \La \subseteq P\}$, and let $\mu_P$ be the $\PGL(2,\R)$-invariant probability on $X_P$. 

We note the identification \(\SO(3)\backslash X_{2,3}\cong \PO(2)\backslash X_2\) obtained by identifying the class $\SO(3)[\La]$ with the shapes of lattices in the horizontal plane $\R^2\times 0$. Hence, the map sending a \(2\)-lattice in $\R^3$ to its shape is the natural quotient map $\shape:X_{2,3}\to \SO(3)\backslash X_{2,3}$.  With this identification, the pushed measure $\shape_*\mu_P$ is the natural measure $\mu_{\PO(2)\backslash X_2}$  on $\PO(2)\backslash X_2$, for every $P\in \Gr_2(\R^3)$. Therefore, given a lattice $\tilde\Ga\subseteq \SO(2,1)^\circ$ and a non-degenerate \(2\)-lattice $\La\subseteq \R^3$, we obtain from \Cref{thm:non-degen orb}
\begin{equation}\label{eq:AvgsInShapSarProof}
    \lim_{T\to\infty}\frac{1}{\#\tilde\Ga_T}\sum_{\tilde\ga\in \tilde\Ga_T}f(\shape([\tilde\ga\La]))=\mu_{\PO(2)\backslash X_2}(f),\;\forall f\in C_c(\PO(2)\backslash X_2).
\end{equation}

Let $Q(x,y,z)=2xz-y^2$ and let $\SO(Q)$ be the special orthogonal group preserving $Q$. Consider $\Ga:=\SO(Q)^\circ \cap \SL(3,\Z)$, and note that $\Ga^*=\Ga$, where $\Ga^*:=(\Ga^\tr)^{-1}$. For $v\in \Z^3\smallsetminus 0,$ we have that
$$\ga\La_{v}=\La_{\gamma^* v},~\text{ for }\ga\in \Ga,$$
where $\La_v:=v^\perp \cap \Z^3$.  Thus, to prove that the shapes $\shape(\La_v)$ for $v\in H_\Z:=\{x\in\Z^3:Q(x)=1\}$ are dense in \(\PO(2)\backslash X_2\), it is sufficient to show that the shapes of the subcollection  $\Ga\La_{v_0}$ are dense, where $v_0:=(1,1,1)\in H_\Z$. 

To apply \Cref{thm:non-degen orb}, we may choose \(M\in \textup{O}(3)\) such that
$M^{-1}\SO(Q)^\circ M=\SO(2,1)^\circ$, and we observe that the plane
$\Span_\R(M^{-1}\Lambda_{v_0})$ is nondegenerate for the form \(x^2+y^2-z^2\). Thus, we may apply \Cref{thm:non-degen orb} for $\tilde\Ga=M^{-1}\Ga M$ and $\La=M^{-1} \La_{v_0}$. Let $x\in \PO(2)\backslash X_2$ and let $\U\subseteq \PO(2)\backslash X_2$ be a compact neighborhood of $x$.  Choose $f\in C_c(\PO(2)\backslash X_2)$ supported in $\overline{\U}$ and positive on $\U$. By \eqref{eq:AvgsInShapSarProof},
\[0<\mu_{\PO(2)\backslash X_2}(f)=\lim_{T\to\infty}\frac{1}{\#\tilde\Ga_T}\sum_{\tilde\ga\in \tilde\Ga_T} f(\shape([\tilde\ga (M^{-1}\La_{v_0})])=\lim_{T\to\infty}\frac{1}{\#\tilde\Ga_T}\sum_{\ga\in \Ga,\; \|M^{-1}\ga M\|\leq T} f(\shape([\ga \La_{v_0}])).\]
In particular, there exists $\ga \in \Ga$ such that $\shape[\ga \La_{v_0}]\in \U$.
\end{proof}

\subsection*{Orbits of degenerate lattices}
We now present our results on the limits of the norm-ball averages \eqref{eq:def avgs} for a degenerate pair $(\La_1,\La_2)$. We split the discussion into the higher-dimensional case $n\geq 3$ and the low-dimensional case $n=2$, due to a special arithmetic phenomenon occurring only when $n=2$. 

When $n\geq 3$, the limits of \eqref{eq:def avgs} for $([\La_1],[\La_2])\in\C_{r,n+1}$ are described by a limiting measure of the same form as in  \Cref{thm:non-degen orb}, where the only difference is the density function on $\D_{r,n+1}$.
\subsubsection*{Orbits of degenerate lattices for $n\geq3$}

Consider the unipotent subgroup $U\subseteq G$ given by
\begin{equation}
    U:=\left\{u(\mathbf{x}):=\exp\left(\begin{bsmallmatrix}
        0&\mathbf{x}&0\\
        -\mathbf{x}^{\tr}&\mathbf{0}&\mathbf{x}^{\tr}\\
        0&\mathbf{x}&0
    \end{bsmallmatrix}\right):\mathbf{x}\in\R^{n-1}\right\}.
\end{equation}
Given a degenerate $r$-dimensional subspace $P_0\in \Gr_r(\R^{n+1})$,  there exists $g_0\in G$ such that $$ P_0=g_0\Span_\R\{\mathbf{e}_1-\mathbf{e}_{n+1},\mathbf{e}_2,\dots,\mathbf{e}_r\}.$$ 
We define a density function $w^\infty_{P_0}\in C(\D_{r,n+1})$ by: 
\begin{equation}
w^\infty_{P_0}(k_0K_{\Pinfty}):=
\frac{\int_{K_{\Pinfty}}\int_{\R^{n-1}}{\|k_0\kappa a(\infty) u(\mathbf{x})g_0^{-1}\|^{-(n-1)}}\, d\mathbf{x} d\kappa}
{\int_K\int_{\R^{n-1}}{\|ka(\infty)u(\mathbf{x})g_0^{-1}\|^{-(n-1)}}\, d\mathbf{x}dk},\:\textup{for }k_0\in K.
\end{equation}
Here $a(\infty)$ is given in \eqref{eq:DefOfainftyIntro}. For the convergence of the above integral, see \Cref{lem:integral of Aux inverse estimate}. The definition $w^\infty_{P_0}$ is independent of the choice of $g_0$; indeed, choosing a different $g_0'$ such that $g'_0\Span_\R\{\mathbf{e}_1-\mathbf{e}_{n+1},\mathbf{e}_2,\dots,\mathbf{e}_r\}=P_0$ changes the above expressions by the same scalar multiple, which cancels in the quotient.

We note that if $\|\cdot\|$ is left $K$-invariant, then $w^\infty_{P_0}\equiv 1$, for every $P_0\in \Gr_r(\R^{n+1})$.

\begin{theorem}\label{thm:degen orbits higher dim}
    Let $n\geq3$. Let $\Ga\subseteq G$ be a lattice, and let $([\La_1],[\La_2])\in \C_{r,n+1}$, i.e.  $\Span_\R(\La_1)$ is degenerate. We denote $P_0:=\Span_{\R}(\La_1).$ Then, for all $f\in C_c(X_{r,n+1}),$
    \begin{equation}
        \lim_{T\to\infty}\frac{1}{\#\Ga_T}\sum_{\ga\in \Ga_T}f(\ga\cdot ([\La_1],[\La_2]))=\int_{K/K_{\Pinfty}}\mu_{k\cdot \Pinfty}(f) w^\infty_{P_0}(kK_{\Pinfty})d(kK_{\Pinfty}),
    \end{equation}
 where $d(kK_{\Pinfty})$ is the $K$-invariant probability on $K/K_{\Pinfty}$.
\end{theorem}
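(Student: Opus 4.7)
The plan is to adapt the proof of \Cref{thm:non-degen orb} to the setting where $\La_1$ already lies in a degenerate subspace, so the orbit $\Ga\cdot x_0$ is contained in the boundary locus $\C_{r,n+1}$ from the outset and the relevant decomposition of $G$ is via a parabolic rather than a reductive subgroup. First, I would invoke the counting theorems of Eskin--McMullen and Gorodnik--Nevo for norm balls in the rank-one group $G=\SO(n,1)^\circ$, giving $\#\Ga_T\sim c\cdot T^{n-1}$, and apply a standard thickening argument to replace the discrete sum by the continuous average
$$\frac{1}{\vol(G_T)}\int_{G_T}f(g\cdot x_0)\,dg,$$
with tightness at the cusps of the fibers $X_r\times X_{n+1-r}$ controlled by a Dani--Margulis nondivergence estimate for the unipotent $U$-orbit through $x_0$ in $X_\PP$. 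Next, I would pick $g_0\in G$ with $P_0=g_0\cdot\Span_\R\{\mathbf{e}_1-\mathbf{e}_{n+1},\mathbf{e}_2,\dots,\mathbf{e}_r\}$, set $x_0^{st}:=g_0^{-1}\cdot x_0$, and use the Iwasawa decomposition $G=KAU$ to parameterize every $g\in G$ uniquely as $g=k\cdot a(t)\cdot u(\mathbf{x})\cdot g_0^{-1}$, so that $g\cdot x_0=k\cdot a(t)u(\mathbf{x})\cdot x_0^{st}$, $\|g\|=\|k\cdot a(t)u(\mathbf{x})g_0^{-1}\|$, and the Haar element decomposes as $dg=C\,e^{(n-1)t}\,dk\,dt\,d\mathbf{x}$.

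The central dynamical step is to show that the expanding-translate average $\iint e^{(n-1)t}\,\delta_{a(t)u(\mathbf{x})\cdot x_0^{st}}\,dt\,d\mathbf{x}$ (restricted to the appropriate norm-ball region) equidistributes on $\C_{r,n+1}$, with Grassmannian projection determined by $(k,\mathbf{x})$ and fiber distribution given by the invariant probability $\mu_\PP$. As $t\to\infty$, the Grassmannian projection of $a(t)u(\mathbf{x})\cdot x_0^{st}$ converges to a degenerate subspace $V_\infty(\mathbf{x})$ containing the attracting null direction $\mathbf{e}_1+\mathbf{e}_{n+1}$, and combined with the outer $K$-rotation, $(k,\mathbf{x})$ sweeps out $\D_{r,n+1}\cong K/K_{\Pinfty}$. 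On the fiber, the $a(t)u(\mathbf{x})$-action produces expanding horocycle orbits in $X_r\times X_{n+1-r}$, and their equidistribution toward $\mu_\PP$ follows from Ratner's measure classification combined with the Dani--Margulis linearization method and the Mozes--Shah expanding-translate theorem. The hypothesis $n\geq 3$ enters essentially here: $\dim U=n-1\geq 2$ ensures that the projections of $U$ to the factors $\PGL(r,\R)$ and $\PGL(n+1-r,\R)$ are multi-dimensional unipotents, ruling out the exceptional closed horocycle orbits that arise when $n=2$ (treated separately by \Cref{thm:degen orbits low dim}). This step is the main technical obstacle and demands uniformity in the Grassmannian parameter $k\in K$.

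Finally, the weight $w^\infty_{P_0}$ is extracted by a Jacobian computation. Using $e^{-t}a(t)\to a(\infty)$ as $t\to\infty$, one has
$$\|k\cdot a(t)u(\mathbf{x})g_0^{-1}\|\sim e^t\,\|k\cdot a(\infty)u(\mathbf{x})g_0^{-1}\|,$$
so the constraint $\|g\|\leq T$ is equivalent to $e^t\lesssim T/\|k\cdot a(\infty)u(\mathbf{x})g_0^{-1}\|$. Integrating $e^{(n-1)t}\,dt$ up to this threshold produces the factor $\|k\cdot a(\infty)u(\mathbf{x})g_0^{-1}\|^{-(n-1)}$ in the integrand. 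Disintegrating $dk$ along the projection $K\to K/K_{\Pinfty}$ and dividing by $\vol(G_T)\sim c'T^{n-1}$ yields precisely the weight $w^\infty_{P_0}$ of the statement, with independence from the choice of $g_0$ consistent with the right-invariance of $d\mathbf{x}$ under the Levi action of the stabilizer of $\Span_\R\{\mathbf{e}_1-\mathbf{e}_{n+1},\mathbf{e}_2,\dots,\mathbf{e}_r\}$.
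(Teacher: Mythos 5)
Your outline follows the same architecture as the paper's argument: reduce the discrete average to the continuous one over $G_T$ (the paper's \Cref{prop:reduction general method}), translate the base point by $g_0$ so that $\La_1\subset P_1^\infty$, decompose Haar measure via $KAU$ as in \eqref{eq:Haar measure in KAU}, use $a(s)=e^s a(\infty)+O(1)$ to turn the norm‑ball cutoff into the density $\|k a(\infty)u(\mathbf{x})g_0^{-1}\|^{-(n-1)}$ (Lemmas \ref{lem:assymp of bT integral for gen case}--\ref{lem:the set dtL is epsilon negilble}), and feed in equidistribution of the expanding translates $a(s)u(\mathbf{x})$ acting simultaneously on $G/\Ga$ and on the fiber $X_r\times X_{n+1-r}$. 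Where you diverge is the proof of that central equidistribution: the paper runs mixing (the ``banana trick'', \Cref{thm:margulis banana trick}) on each factor to pin down the marginals, upgrades to unipotent invariance of the limit (\Cref{lem: unipotent invariance general}), and then uses a Goursat‑type joining argument with Ratner (\Cref{prop:if projections are full and no iso then equid is full}); your Ratner‑plus‑linearization‑plus‑Mozes--Shah route on the product space is a legitimate alternative and would also require the uniformity over $k$ and over translates of $U$ that the paper supplies by a partition of unity (\Cref{prop:convergence of inner integrals.}).

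Two points need repair. First, your stated reason for the hypothesis $n\geq 3$ --- that $\dim U=n-1\geq 2$ makes the unipotent projections ``multi‑dimensional'' --- is not the operative mechanism: a multi‑dimensional unipotent does not by itself exclude proper intermediate subgroups with closed orbits (e.g.\ diagonal embeddings). What is actually needed, and what the paper proves, is that any closed subgroup $L$ of $G\times\PGL(r,\R)\times\PGL(n+1-r,\R)$ projecting onto each factor must be everything, which holds because for $n\geq3$ there is no surjective homomorphism from the simple group $\SO(n,1)^\circ$ onto $\PGL(r,\R)\times\PGL(n+1-r,\R)$; for $n=2$ the isogeny $\SL(2,\R)\to\SO(2,1)^\circ$ produces exactly the exceptional diagonal orbits of \Cref{thm:degen orbits low dim}. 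Second, two smaller imprecisions: the equidistribution must be established \emph{jointly} on $G/\Ga\times X_{r,n+1}$ (not only on $\C_{r,n+1}$), since the thickening step of \Cref{prop:reduction general method} tests against $\varphi_\de\otimes f$ and one needs the limit to factor as $\mu_{G/\Ga}\otimes(\cdot)$; and the Grassmannian component of $a(t)u(\mathbf{x})\cdot x_0^{st}$ does not converge to some $\mathbf{x}$‑dependent subspace --- $A$ and $U$ stabilize $P_1^\infty$, so that component is constant equal to $\Pinfty$ and only the outer $K$‑variable sweeps out $\D_{r,n+1}\cong K/K_{\Pinfty}$. Neither issue breaks the architecture, but the first one is precisely the step on which the theorem's dimensional restriction rests, so it cannot be left at the level of ``multi‑dimensionality of $U$.''
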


\subsubsection*{Special orbits of degenerate $2$-lattices in $\R^3$}

In this case, the pair notation is redundant. In particular, the collection $\D_{2,3}\subseteq \Gr_2(\R^3)$ is identified with the circle of planes tangent to the light cone, and the collection $\C_{2,3}$ of degenerate $2$-lattices is a bundle over that circle.

Consider the diagonal subgroup $A\subseteq G$ and its expanding unipotent subgroup $U\subset G$, given by
\[
A:=\left\{a(s):=\exp\begin{bsmallmatrix}
        0& 0 &s\\
        0& 0 & 0\\
        s & 0 &0
    \end{bsmallmatrix}:s\in\R \right\},
\quad
U:=\left\{u(t):= \exp\left(\begin{bsmallmatrix}
        0&t&0\\
        -t&0&t\\
        0&t&0
\end{bsmallmatrix}\right):t\in\R\right\}.
\]

We denote \[
\de(s):=\begin{bmatrix}
    e^{\frac{s}{2}} & 0 \\
    0 & e^{-\frac{s}{2}}
\end{bmatrix},
\quad
\ups(t):=\begin{bmatrix}
    1 & t \\
    0 & 1
\end{bmatrix},
\]and we let \(\Phi:\SL(2,\R)\to G\) be the special isogeny (surjective map with finite kernel) characterized by
\begin{equation}\label{eq:CharacterisationOfIsogeny}
    \Phi(\de(s))=a(s),\quad \Phi(\ups(t))=u(t).
\end{equation}

In the following, we define a family of representations of $\SL(2,\R)$ acting on each degenerate plane, which align in a certain way with the $\SL(2,\R)$ representation on \(\R^3\) given by \(\Phi\). 

First consider the standard degenerate plane \(P_0^\infty:=\Span_{\R}\{\mathbf e_1+\mathbf e_3,\mathbf e_2\}\). Denote
\[
v^+:=\mathbf e_1+\mathbf e_3,\quad v^0:=\mathbf e_2.
\]
By representing linear maps on $P_0^\infty$ with respect to the ordered basis $(v^+,v^0)$, we get an isomorphism
\[
\rho^\infty:\SL(2,\R)\to \SL(P_0^\infty);
\]
that is,
\[\rho^\infty\begin{bsmallmatrix}
    a&b\\c&d
\end{bsmallmatrix}v^+=av^++cv^0,\quad\rho^\infty\begin{bsmallmatrix}
    a&b\\c&d
\end{bsmallmatrix}v^0=bv^++dv^0.\]

Observe that the vectors $v^+,v^0$ are eigenvectors of $a(s)$, and the parabolic subgroup $AU\subseteq G$ preserves $P_0^\infty$ and acts on $X_{P_0^\infty}$. Explicitly:
\begin{align}\label{eq:a u action}
    a(s)v^+=e^sv^+,&\quad a(s)v^0=v^0,\\
    u(t)v^+=v^+,&\quad u(t)v^0=tv^++v^0.\nonumber
\end{align}
Then, by \eqref{eq:a u action}, we get the following correspondence:
\begin{equation}\label{eq:alignment}
   \Phi(\de(s))\cdot[\La]=[\rho^\infty(\de(s))\La],
   \quad
   \Phi(\ups(t))\cdot [\La]=[\rho^\infty(\ups(t))\La],\quad \forall\La\subseteq P_0^\infty.
\end{equation}

Next, to describe the correspondence for the remaining degenerate planes, set \(P^\infty_\theta:=k_\theta P_0^\infty,\) where 
\[k_\theta:=\begin{bsmallmatrix}
    \cos(\theta)&-\sin(\theta)&0\\
    \sin(\theta)&\cos(\theta)&0\\
    0&0&1
\end{bsmallmatrix}\in G.
\]

We define the representation \(\rho^\infty_\theta:\SL(2,\R)\to \SL(P^\infty_\theta)\) by
\begin{equation}
   \label{eq:DefOfRhoInftyTheta}
\rho^\infty_\theta(g):=k_\theta \rho^\infty(g) k_\theta^{-1},\quad g\in\SL(2,\R).
\end{equation}
Notice that the parabolic subgroup \(k_\theta AU k_\theta^{-1}\) preserves $P^\infty_\theta$, and similarly to \eqref{eq:alignment}, we have
\begin{equation}\label{eq:TwistedAlignment}
k_\theta\Phi(\de(s))k_\theta^{-1}\cdot[\La]=[\rho_\theta^\infty(\de(s))\La],
\quad
k_\theta\Phi(\ups(t))k_\theta^{-1}\cdot [\La]=[\rho_\theta^\infty(\ups(t))\La],\quad \forall\La\subseteq P_\theta^\infty.   
\end{equation}

We will now define the special points.

\begin{definition}\label{def:special points}
     Let $\Ga\subseteq \SO(2,1)^\circ$ be a lattice, and for $\theta\in[0,2\pi)$, define
\begin{equation}\label{eq:DefGammaThetaPhi}
  \Ga^\Phi_\theta:=\{g\in\SL(2,\R):k_\theta \Phi(g)k_\theta^{-1}\in \Ga\}.
\end{equation} 
We say that a $2$-lattice $\La\subseteq P^\infty_\theta$ is $\Ga$-special, if the orbit \(\{[\rho_\theta^\infty(g)\La]:g\in \Ga^\Phi_\theta\}\) is finite, say
     \[
     \{[\rho_\theta^\infty(g)\La]:g\in \Ga^\Phi_\theta\}=:\{[\La_1],\dots,[\La_m]\}.
     \]
     We refer to this finite set as the $(\Ga,\La)$-packet.
\end{definition}

\begin{remark}\label{rem:StabIslatticeForSpecialOrbits}
    Let
    \[
    \textup{Stab}_\theta([\La]):=\{g\in\SL(2,\R):[\rho_\theta^\infty(g)\La]=[\La]\}.
    \]
    Note that both $\textup{Stab}_\theta([\La])$ and $\Ga^\Phi_\theta$ are lattices in $\SL(2,\R)$. Then $\La$ is $\Ga$-special if and only if $\textup{Stab}_\theta([\La])$ is commensurable with $\Ga^\Phi_\theta$, equivalently if and only if $\textup{Stab}_\theta([\La])\cap \Ga^\Phi_\theta$ is a lattice in $\SL(2,\R)$.
\end{remark}

\begin{remark}
    If $\Lambda$ is $\Gamma$-special, then a conjugate of $\Gamma$ is commensurable with $\Phi(\SL(2,\Z))$. 
\end{remark}

The following lemma shows that the $\Ga$-orbit of a special point is trapped in a multi-section. Denote:
\[\ka_\vartheta:=\begin{bsmallmatrix}
    \cos(\vartheta)&-\sin(\vartheta)\\
    \sin(\vartheta)&\cos(\vartheta)
\end{bsmallmatrix},\]
and note that $\Phi(\ka_{\theta/2})=k_\theta$.
\begin{lemma}\label{lem:containment in multi-section}
    Let $\Ga\subseteq G$ be a lattice and let $\La\subset P^\infty_{\theta_0}$ be a $\Ga$-special $2$-lattice. Then
    \[
    \Ga\cdot[\La]\subseteq \left\{
    k_{\theta}\bigl[\rho_{\theta_0}^\infty(\ka_{-\theta/2})\La_i\bigr]
    :\ 1\le i\le m,\ \theta\in[0,2\pi)
    \right\},
    \]
    where $\{[\La_i]: 1\le i\le m\}$ is the $(\Ga,\La)$-packet.
\end{lemma}
\begin{remark}
    Note that \(\rho_{\theta_0}^\infty(\ka_{-\theta/2})\) is not a Euclidean rotation. Namely, the shapes of the lattices \(\rho_{\theta_0}^\infty(\ka_{-\theta/2})\La_i\) vary when varying $\theta$.
\end{remark}
\begin{proof}
Take $\ga\in\Ga$. Since $\ga P^\infty_{\theta_0}$ is degenerate, there exists $\theta\in[0,2\pi)$ such that \(\ga P^\infty_{\theta_0}=P^\infty_{\theta+\theta_0}.\) Hence
\[
b_\ga:=k_{\theta+\theta_0}^{-1}\ga k_{\theta_0}\in AU.
\]

Choose $g_\ga=\de(s)\ups(t)\in\SL(2,\R)$ such that $\Phi(g_\ga)=b_\ga$, and note that
\[
\ga=k_\theta\left(k_{\theta_0}\Phi(g_\ga) k_{\theta_0}^{-1}\right). 
\]
Then, by \eqref{eq:TwistedAlignment}, \(\ga\cdot[\La]=k_\theta \cdot\left[\rho_{\theta_0}(g_\ga) \La\right]\).

Using that \(\Phi(\ka_{\theta/2})=k_\theta\), we also see that \(\ga=k_{\theta_0}\Phi(\ka_{\theta/2}g_\ga) k_{\theta_0}^{-1}\). Therefore, $\ka_{\theta/2}g_\ga\in\Ga^\Phi_{\theta_0}$, and by definition of the packet, there exists $i$ such that
\[
[\La_i]=[\rho_{\theta_0}^\infty(\ka_{\theta/2}g_\ga)\La].
\]
We may now conclude that:
\[
\ga\cdot[\La]
=
k_\theta\bigl[\rho_{\theta_0}^\infty(\ka_{-\theta/2}(\ka_{\theta/2}g_\ga))\La\bigr]
=
k_\theta\bigl[\rho_{\theta_0}^\infty(\ka_{-\theta/2})\La_i\bigr],
\]
which proves the claim.
\end{proof}

Our main theorem below proves that orbits of special points become dense in their multi-section and are distributed according to an explicit measure. This limiting measure is an $m$-extension, a notion introduced in \cite[Definition 1.5]{Sargent2017DynamicsOT}.  If a lattice is not $\Ga$-special, then the corresponding limiting distribution is as in \Cref{thm:degen orbits higher dim}.
\begin{theorem}\label{thm:degen orbits low dim}
    Let $\Ga\subseteq G$ be a lattice, and let $[\La]\in X_{2,3}$ be a degenerate $2$-lattice. Choose $k_{\theta_0}\in K$ such that $\La\subseteq k_{\theta_0}P^\infty$. Let
    \[
    w_{\theta_0}(\vartheta):=\frac{\int_{\R}\frac{1}{\|k_{\vartheta} a(\infty) u(x)k_{\theta_0}^{-1}\|}\, dx}{\int_0^{2\pi}\int_{\R}\frac{1}{\|k_{\theta} a(\infty) u(x)k_{\theta_0}^{-1}\|}\, dx\,d\theta}.
    \]
    Then:
    \begin{enumerate}
        \item\label{enu:special case nondeg} If $\La$ is $\Ga$-special, then for all $f\in C_c(X_{2,3})$,
    \begin{equation}
        \lim_{T\to\infty}\frac{1}{\#\Ga_T}\sum_{\ga\in \Ga_T}f(\ga\cdot [\La])
        =
        \frac{1}{m}\sum_{i=1}^m\int_0^{2\pi}
        f\!\left(k_{\theta}\bigl[\rho_{\theta_0}^\infty(\ka_{-\theta/2})\La_i\bigr]\right)
        w_{\theta_0}(\theta+\theta_0)\,d\theta,
    \end{equation}
    where $\{[\La_1],\dots,[\La_m]\}$ is the $(\Ga,\La)$-packet.
    \item If $\La$ is not $\Ga$-special, then
    \begin{equation*}
        \lim_{T\to\infty}\frac{1}{\#\Ga_T}\sum_{\ga\in \Ga_T}f(\ga\cdot [\La])
        =
        \int_0^{2\pi}\mu_{k_{\theta} P^\infty}(f)\, w_{\theta_0}(\theta)\,d\theta.
    \end{equation*}
    \end{enumerate}
\end{theorem}

\section{Sums to integrals}
This section describes a general method used to obtain our main result. It is a slight adaptation of \cite[Proposition 3.1]{Gorodnik2004DistributionOL}, and the proof follows the same argument closely.

In the following, $G\leq \SL(n,\R)$ is a closed subgroup, $\De\leq G$ is a lattice, and  $\|\cdot\|$ is a norm. Let $dg$ be the Haar measure on $G$ such that $\vol(G/\De)=1$. For a subset \(L \subseteq G\) and \(T>0\), write
\[
L_T:=\{l\in L:\|l\|\le T\}.
\]

We assume: 
\begin{itemize}
    \item For every $\e>0$ there exists $\de>0$ and $T_0>0$ such that for all $T>T_0$\begin{equation}\label{eq:uniform continuity of norm balls}
    \frac{\vol(G_{T(1+\de)})}{\vol(G_{T})}\leq 1+\e.
\end{equation}
\item  It holds that\begin{equation}\label{eq:counting estimate of lattice pts in gp}
     \lim_{\tau\to\infty}\frac{\#\De_\tau}{\vol(G_\tau)}=1.
    \end{equation}
\end{itemize}
These assumptions hold in considerable generality; in particular, they hold for simple groups \(G\), see for example  \cite{GorodnikNevo2012}.

Suppose that $G$ acts continuously on a manifold $X$, and fix $x_0 \in X$. For $T>0$, define a probability measure $\mu_T$ on $G/\De \times X$ by
\begin{equation*}
\mu_T(\varphi \otimes f)
:=
\frac{1}{\operatorname{vol}(G_T)}
\int_{G_T} \varphi(g\De)\, f(gx_0)\, dg,
\end{equation*}
for $\varphi \in C_c(G/\De)$ and $f \in C_c(X)$.

Let $\{O_\de\}_{\de \in (0,1]}$ be a nested family of symmetric neighborhoods of the identity in $G$ such that $O_\de \downarrow \{e\}$ as $\de \to 0$. For each $\de \in (0,1]$, choose a nonnegative bump function $\chi_\de \in C_c(G)$ supported in $O_\de$ and normalized by \(\int_G \chi_\de(g)\, dg = 1.\) Define
\begin{equation*}
\varphi_\de(g\De) := \sum_{\gamma \in \De} \chi_\de(g\gamma^{-1}).
\end{equation*}
Then $\varphi_\de \in C_c(G/\De)$ and \(\int_{G/\De} \varphi_\de = 1.\)

\begin{proposition}\label{prop:reduction general method}
Fix $x_0 \in X$, and suppose that there exists a measure $\nu$ on $G/\De \times X$ such that
\begin{equation}\label{eq:joint equidistribution assumption}
\mu_T \xrightarrow[T\to\infty]{} \nu
\end{equation}
in the weak-* topology. Then, for every $f \in C_c(X)$,
\begin{equation}
\lim_{T\to\infty}
\frac{1}{\#\De_T}
\sum_{\gamma \in \De_T} f(\gamma x_0)
=
\lim_{\de\to 0} \nu(\varphi_\de \otimes f).
\end{equation}
\end{proposition}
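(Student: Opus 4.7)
My plan is to apply the standard thickening method. Since $\varphi_\de$ has compact support on $G/\De$ (its support lies in the projection of $O_\de$, which is precompact for small $\de$), the product $\varphi_\de \otimes f$ lies in $C_c(G/\De \times X)$, so the weak-$*$ hypothesis \eqref{eq:joint equidistribution assumption} immediately gives $\lim_{T\to\infty}\mu_T(\varphi_\de \otimes f) = \nu(\varphi_\de f)$ for each fixed $\de\in(0,1]$. The whole task therefore reduces to establishing the comparison
\[
\lim_{\de \to 0}\limsup_{T\to\infty} \Bigl| \mu_T(\varphi_\de \otimes f) - \frac{1}{\vol(G_T)} \sum_{\ga \in \De_T} f(\ga x_0) \Bigr| = 0,
\]
after which the conclusion follows by first sending $T\to\infty$ (for each $\de$, producing $\nu(\varphi_\de f)$) and then $\de\to 0$; the same estimate simultaneously shows that $\lim_{\de\to 0}\nu(\varphi_\de f)$ exists.

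To prove the comparison, I would unfold the definition of $\varphi_\de$, exchange the locally finite sum with the integral, substitute $u := g\ga$ (so $g = u\ga^{-1}$ and $dg = du$ by unimodularity of $G$), and relabel $\ga_0 := \ga^{-1}$, obtaining
\[
\mu_T(\varphi_\de \otimes f) = \frac{1}{\vol(G_T)} \sum_{\ga_0 \in \De} \int_{O_\de \cap (G_T\ga_0^{-1})} \chi_\de(u)\, f(u\ga_0 x_0)\, du.
\]
Two term-by-term replacements then approximate this by $\vol(G_T)^{-1}\sum_{\ga_0\in\De_T} f(\ga_0 x_0)$. First, uniform continuity of $f$ on the precompact set $\overline{O_1}\cdot\Supp(f)$ allows me to replace $f(u\ga_0 x_0)$ by $f(\ga_0 x_0)$ with an error $\omega(\de)\to 0$ (uniform in $\ga_0$). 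Second, a Lipschitz-type estimate on the norm --- automatic from equivalence of norms on $M_{n+1}(\R)$ and submultiplicativity of the operator norm --- furnishes a constant $c>0$ with $\|u\ga_0\|/\|\ga_0\| \in [1-c\de,\,1+c\de]$ for every $u\in O_\de$, sandwiching the condition $u\ga_0 \in G_T$ between the two nested conditions $\ga_0\in\De_{T(1-c\de)}$ and $\ga_0\in\De_{T(1+c\de)}$.

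Combining these two replacements and using $\int\chi_\de = 1$, the total error splits into a continuity contribution bounded by $\omega(\de)\,\#\De_{T(1+c\de)}/\vol(G_T)$ and a boundary contribution bounded by $\|f\|_\infty\,(\#\De_{T(1+c\de)} - \#\De_{T(1-c\de)})/\vol(G_T)$. The counting assumption \eqref{eq:counting estimate of lattice pts in gp} converts both into asymptotic volume ratios as $T\to\infty$, and the uniform norm-ball continuity \eqref{eq:uniform continuity of norm balls} then forces both to vanish as $\de\to 0$. The main technical point, and the only place the two hypotheses are genuinely used, is this boundary estimate: without \eqref{eq:uniform continuity of norm balls} there would be no way to rule out a positive fraction of $\De_T$ concentrating in the thin annulus $\{\ga: T(1-c\de) < \|\ga\| \leq T(1+c\de)\}$, and the whole scheme would collapse.
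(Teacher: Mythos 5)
Your argument is correct and is essentially the paper's own thickening argument: both unfold $\varphi_\de$, use uniform continuity of $f$ to trade $f(u\ga x_0)$ for $f(\ga x_0)$, and control the boundary terms via the norm-ball regularity \eqref{eq:uniform continuity of norm balls} combined with the lattice-point count \eqref{eq:counting estimate of lattice pts in gp}. The only (cosmetic) difference is that the paper sandwiches the sum between integrals over $G_{T/(1+\de)}$ and $G_{(1+\de)T}$ for non-negative $f$, whereas you keep the integral over $G_T$ and bound the symmetric difference directly, which handles signed $f$ and the existence of $\lim_{\de\to 0}\nu(\varphi_\de f)$ in one stroke.
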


\begin{proof}
It is enough to treat the case $f \ge 0$, since the general case follows by writing $f = f^+ - f^-$.

Fix $\varepsilon > 0$. Since $f$ is compactly supported, it is left uniformly continuous with respect to the $G$-action. Therefore, for $\de > 0$ sufficiently small,
\begin{equation}\label{eq:uniform continuity replacement}
|f(ugx_0) - f(gx_0)| \le \varepsilon
\quad
\text{for all } u \in O_\de \text{ and } g \in G.
\end{equation}
We also choose $\de$ so that
\begin{equation}\label{eq:TranslateOFBallBydelta}
O_\de G_T \subseteq G_{(1+\de)T}
\end{equation}
for all sufficiently large $T$.

We first prove the upper bound. If $\gamma \in \De_T$, then the support of $g \mapsto \chi_\de(g\gamma^{-1})$ is contained in $O_\de \gamma \subseteq G_{(1+\de)T}$, so
\begin{equation}
1
=
\int_G \chi_\de(g\gamma^{-1})\, dg
=
\int_{G_{(1+\de)T}} \chi_\de(g\gamma^{-1})\, dg.
\end{equation}
Hence
\begin{equation}\label{eq:upper first step}
\sum_{\gamma \in \De_T} f(\gamma x_0)
\le
\sum_{\gamma \in \De}
\int_{G_{(1+\de)T}} \chi_\de(g\gamma^{-1})\, f(\gamma x_0)\, dg.
\end{equation}

Moreover, if $\|\gamma\| > (1+\de)^2 T$, then
\begin{equation}
\int_{G_{(1+\de)T}} \chi_\de(g\gamma^{-1})\, f(\gamma x_0)\, dg = 0.
\end{equation}
Indeed, if the integrand is nonzero for some $g \in G_{(1+\de)T}$, then $g\gamma^{-1} \in O_\de$, so $\gamma = ug$ for some $u \in O_\de^{-1}= O_\de$. By \eqref{eq:TranslateOFBallBydelta}, this implies $\gamma \in G_{(1+\de)^2T}$, a contradiction.

Next, for every measurable $B \subseteq G$ and every $\gamma \in \De$, \eqref{eq:uniform continuity replacement} gives
\begin{equation}
\left|
\int_B \chi_\de(g\gamma^{-1})\bigl(f(gx_0) - f(\gamma x_0)\bigr)\, dg
\right|
\le
\varepsilon \int_B \chi_\de(g\gamma^{-1})\, dg
\le
\varepsilon.
\end{equation}
Summing over $\gamma$, and using the vanishing just proved, we obtain
\begin{align}
&\sum_{\gamma \in \De}\int_{G_{(1+\de)T}}
 \chi_\de(g\gamma^{-1})
f(\gamma x_0)\, dg
=\sum_{\gamma \in \De_{(1+\de)^2T}}\int_{G_{(1+\de)T}}
 \chi_\de(g\gamma^{-1})
f(\gamma x_0)\, dg\nonumber\\
&=\int_{G_{(1+\de)T}}
\left(\sum_{\gamma \in \De} \chi_\de(g\gamma^{-1})\right)
f(gx_0)\, dg  + O(\varepsilon)\, \#\De_{(1+\de)^2T}.\label{eq:replace f gamma by f g}
\end{align}
Since $\sum_{\gamma \in \De} \chi_\de(g\gamma^{-1}) = \varphi_\de(g\De)$, combining \eqref{eq:upper first step} with \eqref{eq:replace f gamma by f g} and using  \eqref{eq:counting estimate of lattice pts in gp} yields
\begin{equation}\label{eq:key bound from above}
\frac{1}{\operatorname{vol}(G_T)}
\sum_{\gamma \in \De_T} f(\gamma x_0)
\le
\frac{1}{\operatorname{vol}(G_T)}
\int_{G_{(1+\de)T}} \varphi_\de(g\De)\, f(gx_0)\, dg
+
O(\varepsilon).
\end{equation}
We now prove the lower bound. Similarly to above, if $\|\gamma\| > T$, then for every $u \in O_\de=O_\de^{-1}$ we have $\|u\gamma\| > T/(1+\de)$, and therefore
\begin{equation}
\int_{G_{T/(1+\de)}} \chi_\de(g\gamma^{-1})\, dg = 0.
\end{equation}
It follows that
\begin{equation}\label{eq:DeltaTSumLowerBound}
\sum_{\gamma \in \De}
\int_{G_{T/(1+\de)}} \chi_\de(g\gamma^{-1})\, f(\gamma x_0)\, dg=\sum_{\gamma \in \De_T}
\int_{G_{T/(1+\de)}} \chi_\de(g\gamma^{-1})\, f(\gamma x_0)\, dg
\le
\sum_{\gamma \in \De_T} f(\gamma x_0).
\end{equation}
Using \eqref{eq:uniform continuity replacement} exactly as in the upper-bound argument, but now integrating over $G_{T/(1+\de)}$, we obtain
\begin{align}
\sum_{\gamma \in \De}
\int_{G_{T/(1+\de)}} \chi_\de(g\gamma^{-1})\, f(\gamma x_0)\, dg
=
\int_{G_{T/(1+\de)}}
\varphi_\de(g\De)
f(gx_0)\, dg + O(\varepsilon)\,\#\De_T.
\label{eq:lower replacement}
\end{align}
Therefore, by \eqref{eq:DeltaTSumLowerBound} and \eqref{eq:lower replacement}
\begin{equation}\label{eq:key bound from below}
\frac{1}{\operatorname{vol}(G_T)}
\int_{G_{T/(1+\de)}} \varphi_\de(g\De)\, f(gx_0)\, dg
+
O(\varepsilon)
\le
\frac{1}{\operatorname{vol}(G_T)}
\sum_{\gamma \in \De_T} f(\gamma x_0).
\end{equation}

Define
\begin{equation}
L^-(\de)
:=
\liminf_{T\to\infty}
\frac{\operatorname{vol}(G_{T/(1+\de)})}{\operatorname{vol}(G_T)},
\quad
L^+(\de)
:=
\limsup_{T\to\infty}
\frac{\operatorname{vol}(G_{(1+\de)T})}{\operatorname{vol}(G_T)}.
\end{equation}
Using \eqref{eq:key bound from above}, \eqref{eq:key bound from below}, and the weak-* convergence in \eqref{eq:joint equidistribution assumption}, we get
\begin{align}
L^-(\de)\, \nu(\varphi_\de \otimes f) + O(\varepsilon)
&\le
\liminf_{T\to\infty}
\frac{1}{\operatorname{vol}(G_T)}
\sum_{\gamma \in \De_T} f(\gamma x_0) \nonumber \\
&\le
\limsup_{T\to\infty}
\frac{1}{\operatorname{vol}(G_T)}
\sum_{\gamma \in \De_T} f(\gamma x_0) \nonumber \\
&\le
L^+(\de)\, \nu(\varphi_\de \otimes f) + O(\varepsilon). \label{eq:final squeeze}
\end{align}
By \eqref{eq:uniform continuity of norm balls}, we have $L^\pm(\de) \to 1$ as $\de \to 0$. Since $\varepsilon > 0$ is arbitrary, \eqref{eq:final squeeze} implies
\begin{equation}
\lim_{T\to\infty}
\frac{1}{\operatorname{vol}(G_T)}
\sum_{\gamma \in \De_T} f(\gamma x_0)
=
\lim_{\de\to 0} \nu(\varphi_\de \otimes f).
\end{equation}
Since $\lim_{T\to\infty}\frac{\#\De_T}{\vol(G_T)}=1$, this completes the proof.
\end{proof}

\section{Volume computations}\label{sec:vol estiamtes}
We let $n\geq 2$ and denote $G:=\SO(n,1)^\circ$. In this section, we compute the growth of the norm balls \(G_T:=\{g\in G:\|g\|\le T\}\) and isolate the part of the Haar integral that contributes to the limiting densities appearing in Theorems~\ref{thm:non-degen orb}, \ref{thm:degen orbits higher dim}, and \ref{thm:degen orbits low dim}. To obtain the volume estimates, we use decompositions of \(G\) of the form \(KAH\), where \(A\) is a fixed Cartan subgroup of \(G\), \(K\) is the corresponding maximal compact subgroup, and \(H\) is either a symmetric subgroup or the expanding horospherical subgroup. We begin by introducing these subgroups and recording the relevant Haar-measure decompositions.

For $i,j\in\N\cup\{0\}$ with $i+j=n-1$, let
\begin{equation}\label{eq:DefHij}
    H_{i,j}=
    \begin{bmatrix}
        \SO(i+1) & \mathbf{0}\\
        \mathbf{0} & \SO(j,1)^\circ
    \end{bmatrix}.
\end{equation}
Here $\SO(0,1)^\circ$ and $\SO(1)$ are understood to be the trivial group $\{\id\}$. 

Our diagonalizable subgroup \(A\subseteq G\) is given by
\begin{equation}\label{eq:defining the diagonal subgroup}
    A=\left\{a(s):=\exp\left(\begin{bsmallmatrix}
        \mathbf{0} & 1\\
        1 & \mathbf{0}
    \end{bsmallmatrix}s\right):s\in\R\right\},
\end{equation}
where
\begin{equation}\label{eq:expression for a(s)}
    \exp\left(\begin{bsmallmatrix}
        \mathbf{0} & 1\\
        1 & \mathbf{0}
    \end{bsmallmatrix}s\right)
    =
    \begin{bsmallmatrix}
        \cosh(s) & \mathbf{0} & \sinh(s)\\
        \mathbf{0} & I_{n-1} & \mathbf{0}\\
        \sinh(s) & \mathbf{0} & \cosh(s)
    \end{bsmallmatrix},
\end{equation}
and its expanding horosphere is
\begin{equation}\label{eq:DefOfU}
    U=\left\{u(\mathbf{x}):=\exp\left(\begin{bsmallmatrix}
        0 & \mathbf{x} & 0\\
        -\mathbf{x}^{\tr} & \mathbf{0} & \mathbf{x}^{\tr}\\
        0 & \mathbf{x} & 0
    \end{bsmallmatrix}\right):\mathbf{x}\in\R^{n-1}\right\},
\end{equation}
where
\begin{equation}\label{eq:u_x expression}
   \exp\left(\begin{bsmallmatrix}
        0 & \mathbf{x} & 0\\
        -\mathbf{x}^{\tr} & \mathbf{0} & \mathbf{x}^{\tr}\\
        0 & \mathbf{x} & 0
    \end{bsmallmatrix}\right)
    =
    \begin{bsmallmatrix}
        1-\frac{\|\mathbf{x}\|^2}{2} & \mathbf{x} & \frac{\|\mathbf{x}\|^2}{2}\\
        -\mathbf{x}^{\tr} & I_{n-1} & \mathbf{x}^{\tr}\\
        -\frac{\|\mathbf{x}\|^2}{2} & \mathbf{x} & 1+\frac{\|\mathbf{x}\|^2}{2}
    \end{bsmallmatrix}.
\end{equation}

The Haar measure on $G$ admits the following decompositions; see \cite[Chapter 8]{Schlichtkrull_book} and \cite[Section 3]{shah_oh_gor_satake_2009} for further remarks. Let $K$ be the maximal compact subgroup given by \eqref{eq:the maximal cpct}. When $i=0$, we have
\begin{align}
    \int_G f(g)\,dg
    &=
    \int_K\int_{H_{0,n-1}}\int_0^\infty
    \left(
        f(ka(s)h)+
        f\left(ka(s)\text{\tiny$\begin{bsmallmatrix}
            -I_2&\\
            &I_{n-1}
        \end{bsmallmatrix}$}h\right)
    \right)
    \cosh(s)^{n-1}\,ds\,dh\,dk
    \nonumber\\
    &=
    \int_K\int_{H_{0,n-1}}\int_0^\infty
    \left(
        f(ka(s)h)+
        f\left(ka(s)h\text{\tiny$\begin{bsmallmatrix}
            -I_2&\\
            &I_{n-1}
        \end{bsmallmatrix}$}\right)
    \right)
    \cosh(s)^{n-1}\,ds\,dh\,dk.
    \label{eq:Haar measure in KAHij for i zero}
\end{align}
For $i\neq 0$ and $i+j=n-1$,
\begin{equation}\label{eq:Haar measure in KAHij for i nonzero}
    \int_G f(g)\,dg
    =
    \int_K\int_{H_{i,j}}\int_0^\infty
    f(ka(s)h)\sinh(s)^i\cosh(s)^j\,ds\,dh\,dk.
\end{equation}
Moreover,
\begin{equation}\label{eq:Haar measure in KAU}
    \int_G f(g)\,dg
    =
    \int_K\int_{\R^{n-1}}\int_{-\infty}^\infty
    f(ka(s)u(\mathbf{x}))e^{(n-1)s}\,ds\,d\mathbf{x}\,dk.
\end{equation}

Consider
\begin{equation}\label{eq:defining A}
    a(\infty):=
    \begin{bsmallmatrix}
        1/2 & \mathbf{0} & 1/2\\
        \mathbf{0} & \mathbf{0} & \mathbf{0}\\
        1/2 & \mathbf{0} & 1/2
    \end{bsmallmatrix}.
\end{equation}
The matrix $a(\infty)$ will be useful below, since $a(s)=e^s a(\infty)+O(1)$ as $s\to+\infty$.

\begin{lemma}\label{lem:useful estimates on Hij}
The following estimates hold:
\begin{enumerate}
    \item\label{enu:lower bound on norm of a(s)h}
    For $h\in H_{i,j},\;s\in\R$, we have
    $e^{|s|}\|h\| \ll \|ka(s)h\|$ uniformly in $k\in K$.

    \item\label{enu:norm of h is et}
    Suppose that $h\in H_{i,j}$, where $j\neq 0$. Write $h$ in the following Cartan form\emph{:}
    \begin{equation*}
        h=
        \begin{bsmallmatrix}
            c&\\
            &c_1a_j(t)c_2
        \end{bsmallmatrix},
        \quad
        c\in\SO(i+1),
        \quad
        c_1,c_2\in
        \begin{bsmallmatrix}
            \SO(j)&\\
            &1
        \end{bsmallmatrix},
    \end{equation*}
    where
    \begin{equation*}
        a_j(t)=
        \begin{bsmallmatrix}
            \cosh(t) & \mathbf{0} & \sinh(t)\\
            \mathbf{0} & I_{j-1} & \mathbf{0}\\
            \sinh(t) & \mathbf{0} & \cosh(t)
        \end{bsmallmatrix}.
    \end{equation*}
    Then
    \begin{equation}
        \|h\|\asymp \|a_j(t)\|\asymp e^{|t|}.
    \end{equation}

    \item\label{enu:lower bound on norm of a(s)ux}
    For $u(\mathbf{x})\in U,\;s\in\R$, we have
    $e^s\|\mathbf{x}\|^2+e^{|s|}\ll \|a(s)u(\mathbf{x})\|$.
    Moreover, if $s\geq 0$, then
    $e^s\|u(\mathbf{x})\| \ll \|a(s)u(\mathbf{x})\|$.

    \item\label{enu:norm of ux is x2and1}
    Let $u(\mathbf{x})\in U$. Then
    $\|u(\mathbf{x})\|\asymp 1+\|\mathbf{x}\|^2$.

    \item\label{enu:h over Ah is bdd}
    The ratio $\|h\|/\|a(\infty)h\|$ is uniformly bounded for $h\in H_{i,j}$, and the ratio $\|u(\mathbf{x})\|/\|a(\infty)u(\mathbf{x})\|$ is uniformly bounded for $u(\mathbf{x})\in U$.
\end{enumerate}
\end{lemma}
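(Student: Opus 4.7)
My plan is to dispatch the five claims in order of ascending subtlety. The first move is to replace the ambient norm by the operator norm on $M_{n+1}(\R)$: all norms are equivalent, so this costs nothing, and compactness of $K$ immediately absorbs the $k$ factor in (1) via $\|kM\|\asymp\|M\|$.

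Parts (2)--(4) are then routine matrix computations. For (4), the entries $\pm\|\mathbf{x}\|^2/2$ and $1\pm\|\mathbf{x}\|^2/2$ in the explicit form \eqref{eq:u_x expression} dominate, giving $\|u(\mathbf{x})\|\asymp 1+\|\mathbf{x}\|^2$. For (2), compactness of $c,c_1,c_2$ reduces the claim to $\|a_j(t)\|\asymp e^{|t|}$, immediate from the $\cosh t$, $\sinh t$ entries. For (3), I would multiply out $a(s)u(\mathbf{x})$ and read off the $(n+1,n+1)$-entry $e^s\|\mathbf{x}\|^2/2+\cosh s$; this yields both the lower bound $\gtrsim e^s\|\mathbf{x}\|^2+e^{|s|}$ and, using (4), the sharper bound $\gtrsim e^s\|u(\mathbf{x})\|$ when $s\geq 0$.

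The substantive work is in (1) and (5), both of which I would drive by a single observation: $a(\infty)=\tfrac{1}{2} v^+(v^+)^{\tr}$ with $v^+=\mathbf{e}_1+\mathbf{e}_{n+1}$ is a rank-one operator, so $\|a(\infty)h\|_{\mathrm{op}}=\tfrac{1}{\sqrt{2}}\|h^{\tr}v^+\|$. For (5) with $h=\mathrm{diag}(A,B)\in H_{i,j}$, the summands of $h^{\tr}v^+=A^{\tr}\mathbf{e}_1+B^{\tr}\mathbf{e}_{j+1}$ lie in orthogonal Euclidean subspaces, so the whole analysis reduces to relating $\|B^{\tr}\mathbf{e}_{j+1}\|$ and $\|B\|$ to the single entry $B_{j+1,j+1}$. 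The defining relation $BJB^{\tr}=J$ for $J=\mathrm{diag}(I_j,-1)$ gives $\|B^{\tr}\mathbf{e}_{j+1}\|^2=2B_{j+1,j+1}^2-1$, and applying the same identity row-by-row yields $\|B\|_F^2\asymp B_{j+1,j+1}^2$ (using $B_{j+1,j+1}\geq 1$), so $\|h\|\asymp B_{j+1,j+1}\asymp \|a(\infty)h\|$. The $U$ case of (5) is a direct computation giving $\|u(\mathbf{x})^{\tr}v^+\|^2=2(1+\|\mathbf{x}\|^2)^2$, and then $\|a(\infty)u(\mathbf{x})\|=1+\|\mathbf{x}\|^2\asymp\|u(\mathbf{x})\|$ by (4).

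Finally, for (1), I would test $a(s)h$ against $\mathbf{e}_{n+1}$: noting that $h\mathbf{e}_{n+1}$ has vanishing $\mathbf{e}_1$-component with $(n+1)$-coordinate $B_{j+1,j+1}$, a direct calculation gives $\|a(s)h\mathbf{e}_{n+1}\|^2=B_{j+1,j+1}^2\cosh(2s)+B_{j+1,j+1}^2-1\gtrsim B_{j+1,j+1}^2 e^{2|s|}$. Combined with $\|h\|\asymp B_{j+1,j+1}$ this yields the even stronger bound $\|a(s)h\|\gtrsim e^{|s|}\|h\|\geq e^s\|h\|$; the degenerate case $j=0$ (where $h\in K$ is compact) is handled separately via $\|a(s)\|\asymp e^{|s|}$. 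The main obstacle I expect is the algebraic identity $\|B\|_F\asymp B_{j+1,j+1}$ for $B\in \SO(j,1)^\circ$, which is the technical core driving both (1) and (5); once it is extracted from the hyperbolic orthogonality relations, every remaining estimate falls into place.
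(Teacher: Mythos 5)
Your proposal is correct, and all the computations check out (I verified the identities $\|B^{\tr}\mathbf{e}_{j+1}\|^2=2B_{j+1,j+1}^2-1$ and $\|B\|_F^2=j-3+4B_{j+1,j+1}^2$ from $B^{\tr}JB=J$, the formula $\|a(s)h\mathbf{e}_{n+1}\|^2=B_{j+1,j+1}^2\cosh(2s)+B_{j+1,j+1}^2-1$, and $\|u(\mathbf{x})^{\tr}v^+\|^2=2(1+\|\mathbf{x}\|^2)^2$). The overall strategy — normalize the norm, then read off a dominant entry or test vector — is the same as the paper's, but the execution differs in two places. The paper works with the Hilbert--Schmidt norm and gets everything from bottom-right corner entries: the $(n{+}1,n{+}1)$-entry of $a(s)h$ is $\cosh(s)\cosh(t)$, and $\|h\|\asymp\cosh(t)$ follows immediately from the decomposition $h=\diag(c,\,c_1a_j(t)c_2)$ already displayed in part (2) together with bi-invariance of the HS norm under the compact factors; parts (3)--(5) are likewise single corner-entry observations. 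You instead work with the operator norm, test $a(s)h$ against $\mathbf{e}_{n+1}$, and re-derive the comparison $\|B\|\asymp B_{j+1,j+1}$ from the quadratic relations of $\SO(j,1)$ rather than noticing that $B_{j+1,j+1}=\cosh(t)$ in the given $KAK$ coordinates — correct, but it recreates by hand what part (2) hands you for free. On the other hand, your observation that $a(\infty)=\tfrac12 v^+(v^+)^{\tr}$ is rank one, so that $\|a(\infty)h\|_{\mathrm{op}}=\tfrac{1}{\sqrt2}\|h^{\tr}v^+\|$, is a genuinely cleaner route to part (5) than the paper's corner-entry bound: it yields exact formulas rather than just the one-sided estimate needed for boundedness of $\|h\|/\|a(\infty)h\|$. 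Both arguments are complete and elementary; yours is slightly longer where it bypasses the $KAK$ decomposition and slightly sharper where it exploits the rank-one structure.
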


\begin{proof}
By equivalence of norms, there is no loss of generality in assuming that $\|\cdot\|$ is the Hilbert--Schmidt norm, namely $\|g\|=\sqrt{\text{Trace}(g^\tr g)}$, which is bi-$K$-invariant.
In particular, $$\|k a(s) h\|=\|a(s)h\|,~\forall k\in K,\, \forall a(s)\in A,\, \forall h\in H.$$ 

We now prove \eqref{enu:lower bound on norm of a(s)h} and \eqref{enu:norm of h is et}. First consider $H=H_{n-1,0}$, namely $H=K$. By bi-$K$-invariance, and since $\|h\|=\|I_{n+1}\|=\sqrt{n+1}$ 
\[
\|ka(s)h\|=\|a(s)\|\asymp e^{|s|}\asymp \|h\|e^{|s|},
\]
which gives \eqref{enu:lower bound on norm of a(s)h} in this case.

Now assume $H=H_{i,j}$ with $j\neq 0$.  Writing $h$ in Cartan form as in \eqref{enu:norm of h is et}, and using again bi-$K$-invariance,
\[
\|h\|
=
\left\|
\begin{bmatrix}
I_{i+1}&\\
&a_j(t)
\end{bmatrix}
\right\|.
\]
Since
\[
a_j(t)=
\exp\!\left(
\begin{bmatrix}
\mathbf{0}&1\\
1&\mathbf{0}
\end{bmatrix}t
\right)
=
\begin{bsmallmatrix}
\cosh(t)&0&\sinh(t)\\
0&I_{j-1}&0\\
\sinh(t)&0&\cosh(t)
\end{bsmallmatrix},
\]
we get
\[
\|h\|^2
=
(i+1)+(j-1)+2\cosh(2t)
=
n-1+2\cosh(2t)
\asymp \cosh^2(t).
\]
Hence \(\|h\|\asymp \cosh(t)\asymp e^{|t|},\) which proves \eqref{enu:norm of h is et}.

To prove \eqref{enu:lower bound on norm of a(s)h}, note that the bottom right entry of $a(s)h$ is $\cosh(s)\cosh(t)$. Therefore
\[
\|a(s)h\|
\ge \cosh(s)\cosh(t)
\gg e^{|s|}\cosh(t)
\asymp e^{|s|}\|h\|,
\]
and \eqref{enu:lower bound on norm of a(s)h} follows.

We now verify \eqref{enu:lower bound on norm of a(s)ux}. Note that the bottom right entry of $a(s)u(\mathbf{x})$ equals
\(\frac{e^s}{2}\|\mathbf{x}\|^2+\cosh(s).\) Thus
\[
\|a(s)u(\mathbf{x})\|
\ge \frac{e^s}{2}\|\mathbf{x}\|^2+\cosh(s)
\gg e^s\|\mathbf{x}\|^2+e^{|s|},
\]
which is exactly \eqref{enu:lower bound on norm of a(s)ux}. If $s\ge 0$, then by \eqref{enu:norm of ux is x2and1},
\[
\|a(s)u(\mathbf{x})\|
\gg e^s\bigl(1+\|\mathbf{x}\|^2\bigr)
\asymp e^s\|u(\mathbf{x})\|,
\]
which proves the second assertion in \textup{(3)}.

To prove \eqref{enu:norm of ux is x2and1}, observe first that the bottom right entry of $u(\mathbf{x})$ is \(1+\frac12\|\mathbf{x}\|^2,\) so
\[
\|u(\mathbf{x})\|\gg 1+\|\mathbf{x}\|^2.
\]
On the other hand, from the explicit expression \eqref{eq:u_x expression} for $u(\mathbf{x})$, each matrix entry is $O(1+\|\mathbf{x}\|^2)$, and therefore
\[
\|u(\mathbf{x})\|\ll 1+\|\mathbf{x}\|^2.
\]
Hence
\[
\|u(\mathbf{x})\|\asymp 1+\|\mathbf{x}\|^2,
\]
proving \eqref{enu:norm of ux is x2and1}.

Finally, we verify \eqref{enu:h over Ah is bdd}. If $H=H_{n-1,0}$, then $H$ is compact and, again by orthogonal invariance,
\[\|h\|/\|a(\infty)h\|=\sqrt{n+1}/\|a(\infty)\|.\]
If $H=H_{i,j}$ with $j\neq 0$, then the bottom right entry of $a(\infty)h$ is $\frac12\cosh(t)$, and therefore
\[
\|a(\infty)h\|\gg \cosh(t)\asymp \|h\|.
\]
Likewise, the bottom right entry of $a(\infty)u(\mathbf{x})$ is $\frac12\bigl(1+\|\mathbf{x}\|^2\bigr)$, so
\[
\|a(\infty)u(\mathbf{x})\|
\gg 1+\|\mathbf{x}\|^2
\asymp \|u(\mathbf{x})\|.
\]
This proves \eqref{enu:h over Ah is bdd}.
\end{proof}

\begin{lemma}\label{lem:integral of Ah inverse estimate}
Let $J$ denote either $I_{n+1}$ or $a(\infty)$. Let $H=H_{i,j}$, and denote by $dh$ a Haar measure on $H$.    If $n-2<\al$, then $\int_H\frac{1}{\|Jh\|^\al} dh$ converges with the following tail estimate:
     $$\int_{T\leq \|h\|}\frac{1}{\|Jh\|^\al} dh\ll T^{(n-2)-\al}.$$
Otherwise, for $n-2 \geq \al$:
    \begin{equation}
        \int_{\|h\|\leq T}\frac{1}{\|Jh\|^\al} dh\ll\begin{cases}
        \log(T)& \al=n-2, \\
        T^{n-2-\alpha}& \al<n-2,
    \end{cases}
\end{equation}

\end{lemma}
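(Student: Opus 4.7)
The plan is to reduce the integral to a one-dimensional integration in the hyperbolic parameter of the $KAK$-decomposition of the non-compact factor $\SO(j,1)^\circ$. The case $j=0$ is trivial: $H_{n-1,0}=K$ is compact, so $\int_H \|Jh\|^{-\al}\,dh$ is a finite constant and every stated bound holds trivially once $T\geq 1$.

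For $j\geq 1$, I would parameterize
\[
  h= c\cdot k_1\, a_j(t)\, k_2,\qquad c\in\SO(i+1),\ k_1,k_2\in\SO(j),\ t\geq 0,
\]
where $\SO(j)$ is the maximal compact of $\SO(j,1)^\circ$ and $a_j(t)$ is the one-parameter hyperbolic subgroup from part~\eqref{enu:norm of h is et} of \Cref{lem:useful estimates on Hij}. The Haar measure on $H_{i,j}$ factors as $dc\,dk_1\,\sinh(t)^{j-1}\,dt\,dk_2$, the standard Jacobian for the rank-one symmetric space $\SO(j,1)^\circ/\SO(j)$. Part~\eqref{enu:h over Ah is bdd} of \Cref{lem:useful estimates on Hij} gives $\|Jh\|\asymp\|h\|$ (trivially for $J=I_{n+1}$, and from the bounded-ratio statement for $J=a(\infty)$), while part~\eqref{enu:norm of h is et} gives $\|h\|\asymp e^t$ uniformly in the compact variables. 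Integrating out the compact directions,
\[
  \int_H \frac{1}{\|Jh\|^\al}\,dh \;\asymp\; \int_0^\infty e^{-\al t}\sinh(t)^{j-1}\,dt.
\]

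Since $i\geq 0$ and $i+j=n-1$, we have $j-1\leq n-2$, so $\sinh(t)^{j-1}\ll e^{(n-2)t}$ for $t\geq 1$. In the convergent regime $\al>n-2$, the substitution $T=e^{t_0}$ yields the tail bound
\[
  \int_{t_0}^\infty e^{(j-1-\al)t}\,dt \;\ll\; T^{(j-1)-\al} \;\leq\; T^{(n-2)-\al},
\]
using $T\geq 1$ and $j-1-\al<0$. The divergent regime reduces to evaluating $\int_0^{\log T}e^{(j-1-\al)t}\,dt$ in the three subcases $\al<j-1$, $\al=j-1$, $\al>j-1$; in each subcase the explicit answer is dominated by $T^{(n-2)-\al}$ when $\al<n-2$, and by $\log T$ when $\al=n-2$, again using $j-1\leq n-2$ and $T\geq 1$.

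There is no serious obstacle — once the compact directions are absorbed, everything reduces to elementary one-dimensional integration. The only subtlety worth flagging is that the stated bound is typically not tight: the sharp convergence threshold is $j-1$ rather than $n-2$, and the sharp tail exponent is $j-1-\al$. The uniform weakening to $n-2$ is evidently what is convenient for the later applications in the paper, so I would simply keep track of the coarser inequality $j-1\leq n-2$ throughout.
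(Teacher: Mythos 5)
Your proposal is correct and follows essentially the same route as the paper: decompose the Haar measure of $H_{i,j}$ via the Cartan decomposition of the $\SO(j,1)^\circ$ factor with Jacobian $\sinh(t)^{j-1}$, use \Cref{lem:useful estimates on Hij} to get $\|Jh\|\asymp e^{t}$, and then bound $\sinh(t)^{j-1}\ll e^{(n-2)t}$ using $j-1\le n-2$ before doing the elementary one-dimensional integral. Your observation that the sharp exponent is $j-1-\al$ and that the paper deliberately weakens it to $n-2-\al$ is accurate and consistent with how the lemma is applied later.
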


\begin{proof}
If $j=0$, then $H_{n-1,0}=K$ is compact, so the claims are immediate. Thus assume $j>0$.

In the decomposition of $h\in H_{i,j}$ as
\[
h=
\begin{bmatrix}
c&\\
&c_1a_j(t)c_2
\end{bmatrix},
~ t\geq 0,\quad c\in \SO(i+1), \quad c_1,c_2\in \begin{bsmallmatrix}\SO(j)&\\&1\end{bsmallmatrix},
\]
the integration element of the $H_{i,j}$-Haar measure is given by $dh=dc\,dc_1\,dc_2\sinh(t)^{j-1}dt$; cf. \eqref{eq:Haar measure in KAHij for i zero}. Then, by compactness, and by Lemma \ref{lem:useful estimates on Hij}, 
\[
\int_{\|h\|\le T}\frac{1}{\|Jh\|^\alpha}\,dh
\ll
\int_{e^t\le cT} e^{-\alpha t}\sinh(t)^{j-1}\,dt
\ll
\int_{e^t\le cT} e^{(j-1-\alpha)t}\,dt.
\]

Since $j-1\le n-2$, this gives
\[
\int_{\|h\|\le T}\frac{1}{\|Jh\|^\alpha}\,dh
\ll
\int_{e^t\le cT} e^{((n-2)-\alpha)t}\,dt
\ll
\begin{cases}
1,& \alpha>n-2,\\[1mm]
\log T,& \alpha=n-2,\\[1mm]
T^{\,n-2-\alpha},& \alpha<n-2.
\end{cases}
\]
This proves the stated bound when $\alpha\le n-2$, and also shows convergence of $\int_H \|Jh\|^{-\alpha}\,dh$ when $\alpha>n-2$.

For the tail, again using $\|Jh\|\gg e^t$ and $\|h\|\asymp e^t$,
\[
\int_{T\le \|h\|}\frac{1}{\|Jh\|^\alpha}\,dh
\ll
\int_{e^t\ge c^{-1}T} e^{-\alpha t}\sinh(t)^{j-1}\,dt
\ll
\int_{e^t\ge c^{-1}T} e^{((n-2)-\alpha)t}\,dt
\ll
T^{(n-2)-\alpha},
\]
provided $\alpha>n-2$. This proves the lemma.
\end{proof}

We get the following analogous estimates for $U$.    
\begin{lemma}\label{lem:integral of Aux inverse estimate}
Let $J$ denote either $I_{n+1}$ or $a(\infty)$, and let $d\mathbf{x}$ be a Haar measure on $U\cong \R^{n-1}$.    If $\frac{n-1}{2}<\al$, then:
     $$\int_{T\leq \|u(\mathbf{x})\|}\frac{1}{\|Ju(\mathbf{x})\|^\al} d\mathbf{x}\ll T^{\frac{n-1}{2}-\al},$$
and otherwise:
    \begin{equation}
        \int_{\|u(\mathbf{x})\|\leq T}\frac{1}{\|Ju(\mathbf{x})\|^\al} d\mathbf{x}\ll\begin{cases}
        \log(T)& \al=\frac{n-1}{2}, \\
       T^{\frac{n-1}{2}-\al}& \al<\frac{n-1}{2},
    \end{cases}
\end{equation}
for $J=I_{n+1}$ or $J=a(\infty)$.

\end{lemma}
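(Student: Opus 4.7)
The plan is to mirror the proof of \Cref{lem:integral of Ah inverse estimate}, replacing the subgroup $H_{i,j}$ by $U\cong\R^{n-1}$ with its Lebesgue measure $d\mathbf{x}$, and then reducing to a one-variable radial integral. By norm equivalence I may take $\|\cdot\|$ to be the Hilbert--Schmidt norm. First I reduce to the case $J=I_{n+1}$: part~\eqref{enu:h over Ah is bdd} of \Cref{lem:useful estimates on Hij} asserts that $\|u(\mathbf{x})\|/\|a(\infty)u(\mathbf{x})\|$ is uniformly bounded, while the reverse bound $\|a(\infty)u(\mathbf{x})\|\leq \|a(\infty)\|\,\|u(\mathbf{x})\|$ is automatic, so the integrand changes only by a multiplicative constant when $J=a(\infty)$.

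Next, part~\eqref{enu:norm of ux is x2and1} of \Cref{lem:useful estimates on Hij} gives $\|u(\mathbf{x})\|\asymp 1+\|\mathbf{x}\|^2$. Hence the region $\{\|u(\mathbf{x})\|\leq T\}$ is comparable to a Euclidean ball of radius $\asymp \sqrt{T}$ in $\R^{n-1}$, and the integrand satisfies $\|u(\mathbf{x})\|^{-\alpha}\asymp (1+\|\mathbf{x}\|^2)^{-\alpha}$. Passing to polar coordinates on $\R^{n-1}$ converts the integral into
\[
\int_{\|\mathbf{x}\|\leq R}\frac{d\mathbf{x}}{(1+\|\mathbf{x}\|^2)^\alpha}\,\asymp\,\int_0^{R}\frac{r^{n-2}}{(1+r^2)^\alpha}\,dr,\qquad R\asymp \sqrt{T}.
\]

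The integrand behaves like $r^{n-2-2\alpha}$ for large $r$. A direct computation then yields: convergence with tail $\int_R^\infty\asymp R^{n-1-2\alpha}$ when $2\alpha>n-1$; growth $\asymp \log R$ when $2\alpha=n-1$; and growth $\asymp R^{n-1-2\alpha}$ when $2\alpha<n-1$. Substituting $R\asymp \sqrt{T}$ converts these to $T^{(n-1)/2-\alpha}$ and $\tfrac{1}{2}\log T$ respectively, matching the claimed bounds. The tail estimate for the region $\{T\leq \|u(\mathbf{x})\|\}$ in the case $\alpha>(n-1)/2$ is obtained symmetrically by integrating from $c\sqrt{T}$ to $\infty$.

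There is no real obstacle: the argument is a direct reduction to an elementary one-variable integral. The only point worth highlighting is that the critical exponent shifts from $n-2$ in the $H_{i,j}$-case of \Cref{lem:integral of Ah inverse estimate} to $(n-1)/2$ here, reflecting the change from an exponential Jacobian (arising from the hyperbolic factor of $H_{i,j}$) to a polynomial Jacobian (arising from Lebesgue measure on $\R^{n-1}$ together with $\|u(\mathbf{x})\|\asymp 1+\|\mathbf{x}\|^2$).
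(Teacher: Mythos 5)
Your proof is correct and follows essentially the same route as the paper: the paper likewise invokes \Cref{lem:useful estimates on Hij} to replace $\|Ju(\mathbf{x})\|$ by $1+\|\mathbf{x}\|^2$ and reduces to the radial integral $\int_1^{\sqrt{T}} r^{(n-2)-2\alpha}\,dr$, though it records only the bounded-region case and leaves the tail estimate implicit. Your write-up is simply a more complete version of the same computation.
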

\begin{proof}
By Lemma \ref{lem:useful estimates on Hij}, there exists some $c>0$ such that:
\begin{align*}
      \int_{\|u(\mathbf{x})\|\leq T}\frac{1}{\|Ju(\mathbf{x})\|^\al} d\mathbf{x}\ll& \int_{\|\mathbf{x}\|\leq \sqrt{cT-1}} \frac{1}{(\|\mathbf{x}\|^2+1)^\al}d\mathbf{x}\\
      \ll& 1+\int_1^{\sqrt{T}}r^{(n-2)-2\al}dr.
\end{align*}
The tail estimate follows similarly.
\end{proof}
For $T>0,~k\in K,~h\in H$ consider
\begin{equation}\label{eq:def of bTkh}
    b_T(k,h)=\{0\leq s :\|k a(s) h\|\leq T\}.
\end{equation}
The following asymptotic estimate will be key in describing the density in the limiting measure appearing in the main theorems.

\begin{lemma}\label{lem:assymp of bT integral for gen case}
Fix $k\in K$. Then
\begin{equation}\label{eq:asymp-bT-general}
\lim_{T\to\infty}\frac{1}{T^{n-1}}\int_H\int_{b_T(k,h)}\omega(s)\,ds\,dh
=
\begin{cases}
 \frac{1}{(n-1)2^{n-1}}\int_H \frac{1}{\|k a(\infty)h\|^{n-1}}\,dh, & H=H_{i,j},\\[1.2em]
 \frac{1}{n-1}\int_H \frac{1}{\|k a(\infty)h\|^{n-1}}\,dh, & H=U,
\end{cases}
\end{equation}
where $\omega(s)=\sinh(s)^i\cosh(s)^j$ when $H=H_{i,j}$, and
$\omega(s)=e^{(n-1)s}$ when $H=U$.
\end{lemma}

\begin{proof} By Lemma \ref{lem:useful estimates on Hij}\eqref{enu:lower bound on norm of a(s)h}
and \eqref{enu:lower bound on norm of a(s)ux}, there exists $c_0>0$ such that
$b_T(k,h)=\emptyset$ whenever $\|h\|\ge c_0T$. Hence
\[
\frac{1}{T^{n-1}}\int_H\int_{b_T(k,h)}\omega(s)\,ds\,dh
=
\frac{1}{T^{n-1}}\int_{\|h\|\le c_0T}\int_{b_T(k,h)}\omega(s)\,ds\,dh.
\]

Also, we observe that 
\[
\bigl|\|ka(s)h\|-e^s\|ka(\infty)h\|\bigr|
\le \|k(a(s)-e^sa(\infty))h\|
\ll \|h\|,
\]
which implies, together with Lemma \ref{lem:useful estimates on Hij}\eqref{enu:h over Ah is bdd},
\[
\|ka(s)h\|
=
e^s\|ka(\infty)h\|\left(1+O\!\left(\frac{\|h\|}{e^s\|ka(\infty)h\|}\right)\right)
=
e^s\|ka(\infty)h\|\bigl(1+O(e^{-s})\bigr).
\]
Fix $\epsilon\in(0,1)$, and choose $s_0>0$ so that for all $s\ge s_0$,
\[
\|ka(s)h\|=e^s\|ka(\infty)h\|\bigl(1+O(\epsilon)\bigr).
\]
Therefore, for some absolute $C>0$,
\begin{align}
\left\{s\ge s_0:\ e^s\le \frac{T}{(1+C\epsilon)\|ka(\infty)h\|}\right\}
&\subseteq b_T(k,h)\label{eq:b_Tinclusion} \\
&\subseteq
[0,s_0]\cup
\left\{s\ge s_0:\ e^s\le \frac{T}{(1-C\epsilon)\|ka(\infty)h\|}\right\}\nonumber.
\end{align}

The contribution of $[0,s_0]$ is negligible. Indeed,
\[
\int_{\|h\|\le c_0T}\int_0^{s_0}\omega(s)\,ds\,dh
\ll_{s_0}\int_{\|h\|\le c_0T}dh.
\]
If $H=H_{i,j}$, then by Lemma \ref{lem:integral of Ah inverse estimate} with $\alpha=0$,
\[
\int_{\|h\|\le c_0T}dh\ll
\begin{cases}
\log T,& n=2,\\
T^{n-2},& n\ge 3,
\end{cases}
\]
while if $H=U$, Lemma \ref{lem:integral of Aux inverse estimate} with $\alpha=0$ gives
\[
\int_{\|h\|\le c_0T}dh\ll T^{\frac{n-1}{2}}.
\]
Hence in both cases
\[
\frac{1}{T^{n-1}}\int_{\|h\|\le c_0T}\int_0^{s_0}\omega(s)\,ds\,dh=o(1).
\]

We proceed to establish the estimate in the two cases $H=H_{i,j}$ and $U$ separately.

\noindent\emph{Case 1: $H=H_{i,j}$.}
Since $i+j=n-1$,
\[
\omega(s)=\sinh(s)^i\cosh(s)^j=\frac{1}{2^{n-1}}e^{(n-1)s}+r_n(s),
\]
where $r_n(s)\ll e^{(n-3)s}$.

Using the previous sandwich \eqref{eq:b_Tinclusion} for $b_T(k,h)$ and the negligible contribution of $[0,s_0]$, we get
\begin{align*}
\frac{1}{T^{n-1}}\int_H\int_{b_T(k,h)}\omega(s)\,ds\,dh
&=
\frac{(1+O(\epsilon))^{n-1}}{(n-1)2^{n-1}}
\int_{\|h\|\le c_0T}\frac{1}{\|ka(\infty)h\|^{n-1}}\,dh \\
&\quad + E_T + o(1),
\end{align*}
where the error $E_T$ comes from $r_n(s)$. 

We proceed to estimate $E_T$ using the inclusion \eqref{eq:b_Tinclusion} and Lemmata \ref{lem:integral of Ah inverse estimate}--\ref{lem:integral of Aux inverse estimate}. For $n=2$,
\[
E_T\ll \frac{1}{T}\int_{\|h\|\le c_0T}dh\ll \frac{\log T}{T}=o(1).
\]
For $n=3$,
\[
E_T\ll \frac{\log T}{T^2}\int_{\|h\|\le c_0T}dh\ll \frac{\log T}{T}=o(1).
\]
For $n\ge 4$,
\[
E_T\ll \frac{1}{T^2}\int_{\|h\|\le c_0T}\frac{1}{\|ka(\infty)h\|^{n-3}}\,dh
=O(T^{-1})=o(1),
\]
by Lemma \ref{lem:integral of Ah inverse estimate}, since $n-3<n-2$.

Thus
\[
\frac{1}{T^{n-1}}\int_H\int_{b_T(k,h)}\omega(s)\,ds\,dh
=
\frac{(1+O(\epsilon))^{n-1}}{(n-1)2^{n-1}}
\int_{\|h\|\le c_0T}\frac{1}{\|ka(\infty)h\|^{n-1}}\,dh
+o(1).
\]
Since $n-1>n-2$, Lemma \ref{lem:integral of Ah inverse estimate} gives
\[
\lim_{T\to\infty}\int_{\|h\|\le c_0T}\frac{1}{\|ka(\infty)h\|^{n-1}}\,dh
=
\int_H\frac{1}{\|ka(\infty)h\|^{n-1}}\,dh.
\]
Letting $T\to\infty$ and then $\epsilon\to 0$ yields
\[
\lim_{T\to\infty}\frac{1}{T^{n-1}}\int_H\int_{b_T(k,h)}\omega(s)\,ds\,dh
=
\frac{1}{(n-1)2^{n-1}}
\int_H\frac{1}{\|ka(\infty)h\|^{n-1}}\,dh.
\]

\medskip
\noindent\emph{Case 2: $H=U$.}
Here $\omega(s)=e^{(n-1)s}$ exactly, so the same argument gives
\[
\frac{1}{T^{n-1}}\int_H\int_{b_T(k,h)}\omega(s)\,ds\,dh
=
\frac{(1+O(\epsilon))^{n-1}}{n-1}
\int_{\|h\|\le c_0T}\frac{1}{\|ka(\infty)h\|^{n-1}}\,dh
+o(1).
\]
Since $n-1>\frac{n-1}{2}$, Lemma \ref{lem:integral of Aux inverse estimate} implies
\[
\lim_{T\to\infty}\int_{\|h\|\le c_0T}\frac{1}{\|ka(\infty)h\|^{n-1}}\,dh
=
\int_H\frac{1}{\|ka(\infty)h\|^{n-1}}\,dh.
\]
Letting $T\to\infty$ and then $\epsilon\to 0$ finishes the proof.
\end{proof}
\begin{corollary}\label{cor:volume of G balls}
Let
\[
G_T:=\{g\in G:\|g\|\le T\},
\quad
\textup{J}:=\begin{bmatrix}
-1&\\
&I_{n}
\end{bmatrix},
\quad
a(-\infty):=Ja(\infty)J.
\]

If $H=H_{0,n-1}$, then
\begin{equation}\label{eq:vol BT H0n-1 case}
\lim_{T\to\infty}\frac{\vol(G_T)}{T^{n-1}}
=
\frac{1}{(n-1)2^{n-1}}
\int_K\int_H
\left(
\frac{1}{\|ka(\infty)h\|^{n-1}}
+
\frac{1}{\|ka(-\infty)h\|^{n-1}}
\right)\,dh\,dk.
\end{equation}

If $H=H_{i,j}$ with $i\neq 0$, then
\begin{equation}\label{eq:vol BT Hij case}
\lim_{T\to\infty}\frac{\vol(G_T)}{T^{n-1}}
=
\frac{1}{(n-1)2^{n-1}}
\int_K\int_H \frac{1}{\|ka(\infty)h\|^{n-1}}\,dh\,dk.
\end{equation}

If $H=U$, then
\begin{equation}\label{eq:vol BT U case}
\lim_{T\to\infty}\frac{\vol(G_T)}{T^{n-1}}
=
\frac{1}{n-1}
\int_K\int_{\mathbb R^{n-1}}
\frac{1}{\|ka(\infty)u(\mathbf x)\|^{n-1}}\,d\mathbf x\,dk.
\end{equation}
\end{corollary}

\begin{proof}
We prove \eqref{eq:vol BT H0n-1 case} and \eqref{eq:vol BT U case}. The proof of
\eqref{eq:vol BT Hij case} is similar.

\emph{Case 1: $H=H_{0,n-1}$.}
For $k\in K$ and $h\in H$, define
\[
b_T^+(k,h):=\{s\ge 0:\|ka(s)h\|\le T\},
\quad
b_T^-(k,h):=\{s<0:\|ka(s)h\|\le T\}.
\]
Since $a(-s)=\textup{J}a(s)\textup{J}$, and since \(\textup{J}h=h\textup{J}\), we have
\[b_T^-(k,h):=\{s<0:\|ka(s)h\|\le T\}=\{s< 0:\|Jk^\textup{J}a(-s)h\textup{J}\|\le T\},\]
where \(k^\textup{J}=\textup{J}k\textup{J}\). Then, after the change of variable $t=-s$ we have
\[
\int_{b_T^-(k,h)}\cosh(s)^{n-1}\,ds
=
\int_{\widetilde b_T(k^\textup{J},h)}\cosh(t)^{n-1}\,dt,
\]
where \(\widetilde b_T(k^\textup{J},h):=\{t\ge 0:\|k^\textup{J}a(t)h\|_\textup{J}\le T\}\), with \(\|g\|_\textup{J}:=\|\textup{J}g\textup{J}\|\).

Hence, by the Haar measure formula \eqref{eq:Haar measure in KAHij for i zero} for $H_{0,n-1}$,
\[
\frac{\vol(G_T)}{T^{n-1}}
=
\frac{1}{T^{n-1}}
\int_K\int_H
\left(
\int_{b_T^+(k,h)}+\int_{\widetilde b_T(k^\textup{J},h)}
\right)\cosh(s)^{n-1}\,ds\,dh\,dk.
\]

Apply Lemma~\ref{lem:assymp of bT integral for gen case} to the first inner integral with respect to the norm $\|\cdot\|$, and to the second inner integral with respect to the norm $\|\cdot\|_{\mathrm J}$, and then dominated convergence in $k$, we obtain
\[
\lim_{T\to\infty}\frac{\vol(G_T)}{T^{n-1}}
=
\frac{1}{(n-1)2^{n-1}}
\int_K\int_H
\left(
\frac{1}{\|ka(\infty)h\|^{n-1}}
+
\frac{1}{\|k^\textup{J}a(\infty)h\|_\textup{J}^{n-1}}
\right)\,dh\,dk.
\]
For the second term above, note that 
\[\frac{1}{\|k^\textup{J}a(\infty)h\|^{n-1}_\textup{J}}=\frac{1}{\|\textup{J}k^\textup{J}a(\infty)h\textup{J}\|^{n-1}} = \frac{1}{\|ka(-\infty)h\|^{n-1}},\]
which proves \eqref{eq:vol BT H0n-1 case}.

\emph{Case 2: $H=U$.}
By the Haar measure formula for $U$,
\[
\frac{\vol(G_T)}{T^{n-1}}
=
\frac{1}{T^{n-1}}
\int_K\int_{\mathbb R^{n-1}}
\left(
\int_{b_T^+(k,u(\mathbf x))}+\int_{b_T^-(k,u(\mathbf x))}
\right)e^{(n-1)s}\,ds\,d\mathbf x\,dk.
\]

We claim that
\begin{equation}\label{eq:neg U negligible}
\frac{1}{T^{n-1}}
\int_K\int_{\mathbb R^{n-1}}
\int_{b_T^-(k,u(\mathbf x))}e^{(n-1)s}\,ds\,d\mathbf x\,dk
=
O\!\left(\frac{\log T}{T^{\frac{n-1}{2}}}\right).
\end{equation}
Indeed, by Lemma \ref{lem:useful estimates on Hij}, \eqref{enu:lower bound on norm of a(s)ux},
there exists $c_0>0$ such that
\[
b_T^-(k,u(\mathbf x))
\subseteq
\{s<0: e^s\|\mathbf x\|^2+e^{-s}\le c_0T\}.
\]
Therefore
\begin{align*}
\int_{\mathbb R^{n-1}}\int_{b_T^-(k,u(\mathbf x))}e^{(n-1)s}\,ds\,d\mathbf x
&\ll
\int_{-\log(c_0T)}^0
\left(\frac{c_0T-e^{-s}}{e^s}\right)^{\frac{n-1}{2}}
e^{(n-1)s}\,ds \\
&=
\int_{-\log(c_0T)}^0
(c_0Te^s-1)^{\frac{n-1}{2}}\,ds \\
&\ll
T^{\frac{n-1}{2}}\log T,
\end{align*}
which proves \eqref{eq:neg U negligible}.

Hence the negative-$s$ contribution is negligible after division by $T^{n-1}$, and Lemma
\ref{lem:assymp of bT integral for gen case} gives
\begin{align*}
\lim_{T\to\infty}\frac{\vol(G_T)}{T^{n-1}}
&=
\lim_{T\to\infty}
\frac{1}{T^{n-1}}
\int_K\int_{\mathbb R^{n-1}}
\int_{b_T^+(k,u(\mathbf x))}e^{(n-1)s}\,ds\,d\mathbf x\,dk \\
&=
\frac{1}{n-1}
\int_K\int_{\mathbb R^{n-1}}
\frac{1}{\|ka(\infty)u(\mathbf x)\|^{n-1}}\,d\mathbf x\,dk.
\end{align*}
This proves \eqref{eq:vol BT U case}.
\end{proof}
Next, for $L>0$, $k\in K$, and $h\in H$, define
\[
d_{T,L}(k,h):=\{0\le s\le \log T-L:\ \|ka(s)h\|\le T\}.
\]

\begin{lemma}\label{lem:the set dtL is epsilon negilble}
Let $H$ be either $H_{i,j}$ or $U$. For every $\epsilon>0$ there exist
$L_0=L_0(\epsilon)$ and $T_0=T_0(\epsilon)$ such that, for all
$L\ge L_0$, $T\ge T_0$, and all $k\in K$,
\[
\frac{1}{T^{n-1}}\int_H\int_{d_{T,L}(k,h)}\omega(s)\,ds\,dh\le \epsilon,
\]
where $\omega(s)=\sinh(s)^i\cosh(s)^j$ if $H=H_{i,j}$, and
$\omega(s)=e^{(n-1)s}$ if $H=U$.
\end{lemma}

\begin{proof}
By Lemma \ref{lem:useful estimates on Hij}, \eqref{enu:lower bound on norm of a(s)h},
and \eqref{enu:lower bound on norm of a(s)ux}, there exists $c_0>0$ such that
\[
d_{T,L}(k,h)\subseteq
\left\{0\le s\le \log T-L:\ e^s\le \frac{c_0T}{\|h\|}\right\}.
\]
Since $\omega(s)\ll e^{(n-1)s}$, we get
\begin{align*}
\frac{1}{T^{n-1}}\int_H\int_{d_{T,L}(k,h)}\omega(s)\,ds\,dh
&\ll
\frac{1}{T^{n-1}}
\int_{\|h\|\le c_0e^L}\int_0^{\log T-L} e^{(n-1)s}\,ds\,dh \\
&\quad+
\frac{1}{T^{n-1}}
\int_{c_0e^L\le \|h\|\le c_0T}
\int_0^{\log(c_0T/\|h\|)} e^{(n-1)s}\,ds\,dh.
\end{align*}
For the first term,
\[
\frac{1}{T^{n-1}}
\int_{\|h\|\le c_0e^L}\int_0^{\log T-L} e^{(n-1)s}\,ds\,dh
\ll
e^{-(n-1)L}\int_{\|h\|\le c_0e^L}dh=o_L(1),
\]
by Lemma \ref{lem:integral of Ah inverse estimate} or
Lemma \ref{lem:integral of Aux inverse estimate} with $\alpha=0$.

For the second term,
\begin{align*}
\frac{1}{T^{n-1}}
\int_{c_0e^L\le \|h\|\le c_0T}
\int_0^{\log(c_0T/\|h\|)} e^{(n-1)s}\,ds\,dh
&\ll
\int_{\|h\|\ge c_0e^L}\frac{1}{\|h\|^{n-1}}\,dh
+
\frac{1}{T^{n-1}}\int_{\|h\|\le c_0T}dh \\
&= o_L(1)+o_T(1),
\end{align*}
by Lemma \ref{lem:integral of Ah inverse estimate} or
Lemma \ref{lem:integral of Aux inverse estimate} with $\alpha=n-1$ for the first term
and $\alpha=0$ for the second.
Now choose $L_0$ so that the $o_L(1)$ terms are $<\epsilon/2$ for $L\ge L_0$, and then
choose $T_0$ so that the $o_T(1)$ term is $<\epsilon/2$ for $T\ge T_0$.
\end{proof}

We denote
\begin{equation}\label{eq:definition of bTL}    
b_{T,L}(k,h):=b_T(k,h)\setminus d_{T,L}(k,h)
=\{s\ge \log T-L:\ \|ka(s)h\|\le T\}.
\end{equation}

Note that  there exists $c_0>0$ such that
if $\|h\|\ge c_0e^L$, then $b_{T,L}(k,h)=\emptyset$  by Lemma \ref{lem:useful estimates on Hij}. Thus, after restricting to the late range $s\ge \log T-L$, the $H$-integration is automatically
confined to the bounded region $\|h\|\ll e^L$, uniformly in $T$. This is used later in Section \ref{sec:PartOfUnity} in the proof
deriving the limiting distribution, via Corollary \ref{cor:H component can be bounded by paying epsilon} below, to
truncate the $h$-integration to a compact range before applying the equidistribution argument.

\begin{corollary}\label{cor:H component can be bounded by paying epsilon}
Let $f:G\to\mathbb R$ be bounded. For every $\epsilon>0$ there exist
$L_0=L_0(\epsilon,f)$ and $T_0=T_0(\epsilon,f)$ such that, for all
$L\ge L_0$, $T\ge T_0$, and all $k\in K$,
\[
\left|
\frac{1}{\vol(G_T)}\int_H\int_{b_T(k,h)} f(ka(s)h)\omega(s)\,ds\,dh
-
\frac{1}{\vol(G_T)}\int_H\int_{b_{T,L}(k,h)} f(ka(s)h)\omega(s)\,ds\,dh
\right|
\le \epsilon.
\]
Moreover, there exists $c_0>0$ such that
$b_{T,L}(k,h)=\emptyset$ whenever $\|h\|\ge c_0e^L$.
\end{corollary}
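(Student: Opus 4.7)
The plan is to recognize that the two inner integrals differ only on the set $d_{T,L}(k,h) := b_T(k,h) \setminus b_{T,L}(k,h)$, and then use boundedness of $f$ together with the already-established estimate from \Cref{lem:the set dtL is epsilon negilble}.

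First I would write the difference of the two expressions as
\[
    \frac{1}{\vol(G_T)} \int_K \int_H \int_{d_{T,L}(k,h)} f(k\,a(s)\,h)\,\omega(s)\,ds\,dh,
\]
and apply the bound $|f|\leq M$ (where $M:=\sup_G|f|<\infty$) to reduce to estimating
\[
    \frac{M}{\vol(G_T)} \int_H \int_{d_{T,L}(k,h)} \omega(s)\,ds\,dh.
\]
By \Cref{cor:volume of G balls}, $\vol(G_T)/T^{n-1}$ converges to a strictly positive constant, so for all large $T$ we have $\vol(G_T) \gg T^{n-1}$. Hence the above expression is controlled by a constant multiple of $T^{-(n-1)} \int_H \int_{d_{T,L}(k,h)} \omega(s)\,ds\,dh$, which is $\leq\epsilon$ once $L\geq L_0(\epsilon)$ and $T\geq T_0(\epsilon)$ by \Cref{lem:the set dtL is epsilon negilble}. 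Choosing $L_0,T_0$ so the product with $M$ and the implicit constant is at most the prescribed $\epsilon$ completes the main claim.

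For the ``moreover'' assertion, I would invoke \Cref{lem:useful estimates on Hij}\eqref{enu:lower bound on norm of a(s)h} (and \eqref{enu:lower bound on norm of a(s)ux} in the $U$-case), which gives a constant $c>0$ such that $\|k\,a(s)\,h\| \geq c\,e^s\|h\|$ uniformly. If $s\geq \log T - L$ and $\|h\|\geq c_0 e^L$ with $c_0 := 2/c$, then
\[
    \|k\,a(s)\,h\| \geq c\,e^s\|h\| \geq c \cdot \frac{T}{e^L}\cdot c_0 e^L = 2T > T,
\]
showing $s\notin b_{T,L}(k,h)$, so $b_{T,L}(k,h)$ is empty for all such $h$.

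There is no serious obstacle here: the hard analytic work has already been absorbed into \Cref{lem:the set dtL is epsilon negilble} and \Cref{cor:volume of G balls}. The only point requiring care is ensuring the constants $L_0$ and $T_0$ are chosen \emph{after} absorbing both the constant $M$ and the implicit constant coming from $\vol(G_T) \asymp T^{n-1}$; this is routine.
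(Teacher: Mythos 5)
Your argument is correct and is exactly the route the paper intends: the paper gives no written proof of this corollary precisely because it follows by combining the boundedness of $f$, the identity $b_T(k,h)\smallsetminus b_{T,L}(k,h)=d_{T,L}(k,h)$, the bound $\vol(G_T)\gg T^{n-1}$ from \Cref{cor:volume of G balls}, and \Cref{lem:the set dtL is epsilon negilble}, with the ``moreover'' part coming from \Cref{lem:useful estimates on Hij} just as you describe. The only blemish is the stray $\int_K$ in your first display (the statement fixes $k\in K$, so no integration over $K$ appears); it is harmless and disappears in your subsequent displays.
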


\begin{proof}
Since $b_T(k,h)\setminus b_{T,L}(k,h)=d_{T,L}(k,h)$,
\begin{align*}
&\left|
\frac{1}{\vol(G_T)}\int_H\int_{b_T(k,h)} f(ka(s)h)\omega(s)\,ds\,dh
-
\frac{1}{\vol(G_T)}\int_H\int_{b_{T,L}(k,h)} f(ka(s)h)\omega(s)\,ds\,dh
\right| \\
&\le
\frac{\|f\|_\infty}{\vol(G_T)}
\int_H\int_{d_{T,L}(k,h)}\omega(s)\,ds\,dh.
\end{align*}
By Corollary \ref{cor:volume of G balls}, $\vol(G_T)\asymp T^{n-1}$, and by
Lemma \ref{lem:the set dtL is epsilon negilble}, \(\frac{1}{\vol(G_T)}
\int_H\int_{d_{T,L}(k,h)}\omega(s)\,ds\,dh\) can be made arbitrarily small by taking first $L$ and then $T$ sufficiently large.
This proves the first claim. The second is exactly the observation above.
\end{proof}

\section{Action on orthogonal lattices}
Let $n\geq 2$ and $r<n+1$ be natural numbers and denote $X:=X_{r,n+1}$.  A challenge in analyzing the $G$-action \eqref{eq:def of action on X} on $X$ is that the lattices not
only change shape, but also move between different ambient subspaces. In our
setting, a convenient way to separate these two effects is to use a $KAH$
decomposition. The subgroup $H$ is chosen to preserve the ambient subspaces of
the lattices, and therefore acts on the lattices inside these subspaces by
explicit transformations. The essential point is then to understand the action
of the diagonal part $A$. We show that asymptotically this action can be
written in the form $R\delta$, where $\delta$ acts on the lattices inside
their subspaces by explicit diagonal matrices in $\PGL(k,\R)$, while $R$
moves the ambient subspaces to the distinguished degenerate pair
$\mathbf{P}^\infty$; see Lemma~\ref{lem:description of gs}.


Combined with Corollary~\ref{cor:H component can be bounded by paying epsilon},
this reduces the original equidistribution problem to the well-studied problem
of equidistribution of expanding translates of bounded manifolds treated in
Section~\ref{sec:equid results}. Those results yield equidistribution for the
lattices contained in $\mathbf{P}^\infty$, and the $K$-component then moves
this limiting distribution to the lattices contained in the other degenerate
pairs.

\subsection{The case of $H_{i,j}$}

Fix nonnegative integers $i,j$ such that $i+j=n-1$. Consider the pair of nondegenerate orthogonal complementary subspaces
$P^{(0)}_1,P^{(0)}_2\subseteq \R^{n+1}$ defined by
\begin{align}
    P^{(0)}_1&:=\Span_\R\{\mathbf{e}_1,\dots,\mathbf{e}_{i+1}\},\nonumber\\
    P^{(0)}_2&:=\Span_\R\{\mathbf{e}_{i+2},\dots,\mathbf{e}_{n+1}\},\label{eq:def of ver and hor vspaces}\\
    \Pstd&:=(P^{(0)}_1,P^{(0)}_2).\nonumber
\end{align}

Consider \(X_{\Pstd}:=\{([\Lambda_1],[\Lambda_2])\in X:\Lambda_1\subseteq P^{(0)}_1\},\) and observe that the subgroup $H_{i,j}\subseteq \SO(n,1)^\circ$ acts on $X_{\Pstd}$ with respect to the  action induced by the $G$-action  \eqref{eq:def of action on X}.
More specifically, for $h\in H_{i,j}$, write
\[
h=
\begin{bmatrix}
h_1 & 0\\
0 & h_2
\end{bmatrix},
~
h_1\in \SO(i+1),~ h_2\in \SO(j,1)^\circ.
\]
Then,
\begin{equation}
    h\cdot([\Lambda_1],[\Lambda_2])
    =
    ([h_1\Lambda_1],[h_2^*\Lambda_2]),\quad h\in H_{i,j}.
   \end{equation}

It will be convenient to work with the following matrix representation. Let
\[
\rho^{(0)}:\GL(i+1,\R)\times \GL(j+1,\R)\to \GL(P^{(0)}_1)\times\GL(P^{(0)}_2)
\]
be the representation obtained by expressing linear maps
$P^{(0)}_1\to P^{(0)}_1$ and $P^{(0)}_2\to P^{(0)}_2$
in the ordered bases
\[
B_1:=(\mathbf{e}_1,\dots,\mathbf{e}_{i+1}),
\quad
B_2:=(\mathbf{e}_{i+2},\dots,\mathbf{e}_{n+1}).
\]

For $k\in\N$, denote
\[
X_{k}
:=\PGL(k,\R)/\PGL(k,\Z) .
\]
The representation $\rho^{(0)}$ induces an action of
$\PGL(i+1,\R)\times \PGL(j+1,\R)$ and hence an identification
\[
X_{\Pstd}\cong X_{i+1}\times X_{j+1},
\]
under which the identity coset corresponds to \(\bigl([\Span_{\Z}\{\mathbf{e}_1,\dots,\mathbf{e}_{i+1}\}],
[\Span_{\Z}\{\mathbf{e}_{i+2},\dots,\mathbf{e}_{n+1}\}]\bigr).\)

For $([\Lambda_1],[\Lambda_2])\in X_{\Pstd}$, if
$(y_1,y_2)\in X_{i+1}\times X_{j+1}$ is such that \(\rho^{(0)}(y_1,y_2)=([\Lambda_1],[\Lambda_2]),\) then
\begin{equation}\label{eq:action-Hij-on-XP0}
    \begin{bsmallmatrix}
h_1 & 0\\
0 & h_2
\end{bsmallmatrix}\cdot([\Lambda_1],[\Lambda_2])
    =
    \rho^{(0)}\bigl(h_1y_1,h_2^*y_2\bigr).
\end{equation}

We now define matrices $g_1(s)$ and $g_2(s)$, for $s>0$, that describe the
asymptotic action of $a(s)$ on $X_{\Pstd}$.

Let $v^+,v^-\in\R^{n+1}$ be given by
\begin{equation}\label{eq:def of eigenvecs vpm}
    v^+:=\begin{psmallmatrix}
        1\\
        \mathbf{0}\\
        1
    \end{psmallmatrix},
    ~~
    v^-:=\begin{psmallmatrix}
        1\\
        \mathbf{0}\\
        -1
    \end{psmallmatrix}.
\end{equation}
Then
\begin{align}
    a(s)v^+=e^s v^+,\quad
    a(s)v^-=e^{-s}v^-,
    \quad
    a(s)\mathbf{e}_q=\mathbf{e}_q\quad (2\le q\le n).
    \label{eq:a(s) eigenval eqn}
\end{align}

Since $(\mathbf{e}_1,\mathbf{e}_2,\dots,\mathbf{e}_n,v^-)$ and
$(v^+,\mathbf{e}_2,\dots,\mathbf{e}_n,\mathbf{e}_{n+1})$
are bases of $\R^{n+1}$, we define $g_1(s),g_2(s)\in \GL(n+1,\R)$ by
\begin{equation}\label{eq:def g1s}
    g_1(s)\mathbf{e}_1:=\frac{e^s}{2}v^+,\quad
    g_1(s)\mathbf{e}_q:=\mathbf{e}_q\ (2\le q\le n),\quad
    g_1(s)v^-:=e^{-s}v^-,
\end{equation}
and
\begin{equation}\label{eq:def g2s}
    g_2(s)v^+:=e^{-s}v^+,\quad
    g_2(s)\mathbf{e}_q:=\mathbf{e}_q\ (2\le q\le n),\quad
    g_2(s)\mathbf{e}_{n+1}:=-\frac{e^s}{2}v^-.
\end{equation}

\begin{lemma}\label{lem:difference of gs and as}
It holds that
\[
\lim_{s\to\infty}a(s)(g_1(s))^{-1}=\id,
\quad
\lim_{s\to\infty}a(s)^*(g_2(s))^{-1}=\id.
\]
\end{lemma}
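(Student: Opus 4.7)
The plan is to prove both limits by a direct matrix computation on a convenient eigenbasis. The key observation is that $a(s)-\hg(s)$ (respectively $a(s)^*-\vg(s)$) is a rank-one perturbation whose image lies in the contracting direction of $\hg(s)$ (respectively $\vg(s)$); hence, although $\hg(s)^{-1}$ has operator norm of order $e^s$, the product $(a(s)-\hg(s))\hg(s)^{-1}$ still tends to zero.

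For the first limit, I would work in the basis $\{\mathbf{e}_1,\mathbf{e}_2,\dots,\mathbf{e}_n,v^-\}$ of $\R^{n+1}$. Using $\mathbf{e}_1=\tfrac12(v^++v^-)$ together with the eigenvalue equations \eqref{eq:a(s) eigenval eqn}, one directly obtains
\[
a(s)\mathbf{e}_1=\tfrac{e^s}{2}v^++\tfrac{e^{-s}}{2}v^-,\quad a(s)\mathbf{e}_q=\mathbf{e}_q\ (2\le q\le n),\quad a(s)v^-=e^{-s}v^-.
\]
Comparing with \eqref{eq:def g1s}, the difference $a(s)-\hg(s)$ annihilates $\mathbf{e}_2,\dots,\mathbf{e}_n,v^-$ and sends $\mathbf{e}_1\mapsto\tfrac{e^{-s}}{2}v^-$; in particular $\|a(s)-\hg(s)\|=O(e^{-s})$ with image contained in $\R v^-$. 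Reading \eqref{eq:def g1s} the other way, $\hg(s)^{-1}$ acts by $v^+\mapsto 2e^{-s}\mathbf{e}_1$, $\mathbf{e}_q\mapsto\mathbf{e}_q$, and $v^-\mapsto e^{s}v^-$. Writing $a(s)\hg(s)^{-1}-\mathrm{id}=(a(s)-\hg(s))\hg(s)^{-1}$ and evaluating on the basis $\{v^+,\mathbf{e}_2,\dots,\mathbf{e}_n,v^-\}$, only the image of $v^+$ is non-zero, and it equals $e^{-2s}v^-$. Thus $\|a(s)\hg(s)^{-1}-\mathrm{id}\|=O(e^{-2s})\to 0$, which proves the first identity.

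For the second limit, observe that $a(s)$ is symmetric (from \eqref{eq:expression for a(s)}), so $a(s)^*=(a(s)^{T})^{-1}=a(s)^{-1}=a(-s)$. I then repeat the previous argument on the basis $\{v^+,\mathbf{e}_2,\dots,\mathbf{e}_n,\mathbf{e}_{n+1}\}$, writing $\mathbf{e}_{n+1}=\tfrac12(v^+-v^-)$ to expand $a(-s)\mathbf{e}_{n+1}$ and comparing with $\vg(s)$ from \eqref{eq:def g2s}. The difference $a(-s)-\vg(s)$ is again rank one, of norm $O(e^{-s})$, and has image in the one-dimensional eigenspace that $\vg(s)^{-1}$ contracts; multiplying by $\vg(s)^{-1}$ (which sends $v^-$ into a vector of length $O(e^{-s})$ in the direction of $\mathbf{e}_{n+1}$) produces a remainder of order $e^{-2s}$.

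The proof is essentially bookkeeping: the only point that requires care is checking the signs in the decompositions $\mathbf{e}_1=\tfrac12(v^++v^-)$ and $\mathbf{e}_{n+1}=\tfrac12(v^+-v^-)$ when matching the expanding and contracting components of $a(\pm s)$ to the definitions \eqref{eq:def g1s}--\eqref{eq:def g2s}. Once one organizes $a(s)-\hg(s)$ and $a(s)^*-\vg(s)$ as rank-one operators placed in the contracting directions of $\hg(s)^{-1}$ and $\vg(s)^{-1}$ respectively, the decay estimate is immediate and no further machinery is needed.
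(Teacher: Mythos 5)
Your treatment of the first limit is correct and is essentially the computation in the paper: the paper evaluates $a(s)\hg(s)^{-1}$ directly on the basis $(v^+,\mathbf{e}_2,\dots,\mathbf{e}_n,v^-)$ and finds $a(s)\hg(s)^{-1}v^+=v^++e^{-2s}v^-$ with the other basis vectors fixed exactly; your factorization through the rank-one operator $a(s)-\hg(s)$ is the same calculation in different packaging.

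The second limit is where your argument breaks, and it breaks precisely at the point you yourself flagged as ``the only point that requires care.'' With the definition \eqref{eq:def g2s} as written, the difference $a(-s)-\vg(s)$ is \emph{not} of norm $O(e^{-s})$: since $a(-s)\mathbf{e}_{n+1}=-\sinh(s)\mathbf{e}_1+\cosh(s)\mathbf{e}_{n+1}=\tfrac{e^{-s}}{2}v^+-\tfrac{e^{s}}{2}v^-$, while $\vg(s)\mathbf{e}_{n+1}=+\tfrac{e^{s}}{2}v^-$, one gets $(a(-s)-\vg(s))\mathbf{e}_{n+1}=\tfrac{e^{-s}}{2}v^+-e^{s}v^-$, which blows up. Carrying the computation through gives
\[
a(s)^*\vg(s)^{-1}v^-=a(-s)\bigl(2e^{-s}\mathbf{e}_{n+1}\bigr)=e^{-2s}v^+-v^-\longrightarrow -v^-,
\]
so $a(s)^*\vg(s)^{-1}$ converges to the involution fixing $v^+,\mathbf{e}_2,\dots,\mathbf{e}_n$ and negating $v^-$, not to the identity. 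The expanding eigendirection of $a(s)^{*}=a(-s)$ contained in $a(-s)\mathbf{e}_{n+1}$ carries the coefficient $-\tfrac12$, not $+\tfrac12$, so for the stated limit to hold one must take $\vg(s)\mathbf{e}_{n+1}:=-\tfrac{e^{s}}{2}v^-$ (with the corresponding sign change in $R_2$). To be fair, the paper leaves this half of the proof ``to the reader'' and its displayed definition contains the same sign slip, which is harmless downstream because the offending involution preserves $P_2^\infty$ and the relevant invariant measures; but your proof asserts the $O(e^{-s})$ bound without performing the sign check, and the assertion is false as the definitions stand. You need to either carry out the check and correct the sign in \eqref{eq:def g2s}, or note explicitly that the limit is the identity only up to this involution.
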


\begin{proof}
It is enough to check the convergence on the basis
$(v^+,\mathbf{e}_2,\dots,\mathbf{e}_n,v^-)$. 

We first prove that $a(s)(g_1(s))^{-1}\to \id$. For $2\le q\le n$,
\[
a(s)(g_1(s))^{-1}\mathbf{e}_q=\mathbf{e}_q.
\]
Also,
\[
a(s)(g_1(s))^{-1}v^-=a(s)(e^s v^-)=v^-.
\]
Finally, since $2\mathbf{e}_1=v^++v^-$, we get
\[
a(s)(g_1(s))^{-1}v^+
=
a(s)(2e^{-s}\mathbf{e}_1)
=
e^{-s}a(s)(v^++v^-)
=
v^+ + e^{-2s}v^-
\longrightarrow v^+.
\]
Hence $a(s)(g_1(s))^{-1}\to \id$.

Similarly, for the second limit, for $2\le q\le n$,
\[
a(s)^*(g_2(s))^{-1}\mathbf{e}_q=\mathbf{e}_q,
\]
and
\[
a(s)^*(g_2(s))^{-1}v^+=a(s)^*(e^s v^+)=v^+.
\]
Since $2\mathbf{e}_{n+1}=v^+-v^-$, we have
\[
(g_2(s))^{-1}v^-=-2e^{-s}\mathbf{e}_{n+1}=e^{-s}(v^- - v^+).
\]
Therefore,
\[
a(s)^*(g_2(s))^{-1}v^-
=
e^{-s}\bigl(a(s)^*v^- - a(s)^*v^+\bigr)
=
v^- - e^{-2s}v^+
\longrightarrow v^-.
\]
This proves $a(s)^*(g_2(s))^{-1}\to \id$.
\end{proof}

Choose $R_1\in \SL(n+1,\R)$ such that
\begin{equation}\label{eq:def of R1}
    R_1\mathbf{e}_1=\frac{1}{2}v^+,~~
    R_1\mathbf{e}_q=\mathbf{e}_q \textup{ for }2\le q\le n,
\end{equation}
and choose $R_2\in \SL(n+1,\R)$ such that
\begin{equation}\label{eq:def of R2}
    R_2\mathbf{e}_q=\mathbf{e}_q \textup{ for }2\le q\le n,
    ~~
    R_2\mathbf{e}_{n+1}=-\frac{1}{2}v^-.
\end{equation}
We denote
\[R([\La_1],[\La_2]):=([R_1\La_1],[R_2\La_2]),\;([\La_1],[\La_2])\in X_{r,n+1}.\]

Consider the complementary orthogonal  degenerate subspaces
\begin{align}
    P_1^{\infty}&:=\Span_\R\{v^+,\mathbf e_2,\dots,\mathbf e_{i+1}\},\nonumber\\
    P_2^{\infty}&:=\Span_\R\{\mathbf e_{i+2},\dots,\mathbf e_n,v^-\},\label{eq:def of Pinfty}\\
    \Pinfty&:=(P_1^{\infty},P_2^{\infty}).\nonumber
\end{align}
We notice that both $R_i$ and $g_i(s)$ map $P_i^{(0)}$ into $P_i^{\infty}$ for $i=1,2$. In particular, $R$ maps $X_{\Pstd}$ bijectively onto $X_{\Pinfty}$.

Denote
\begin{equation}\label{eq:defining d}
   d_1(s):=\begin{bsmallmatrix}
        e^{\frac{is}{i+1}}&0\\
        0&e^{-\frac{s}{i+1}}I_i
    \end{bsmallmatrix}\in \SL(i+1,\R), \quad d_2(s):=\begin{bsmallmatrix}
        e^{-\frac{s}{j+1}}I_j&0\\
        0&e^{\frac{js}{j+1}}
    \end{bsmallmatrix}\in \SL(j+1,\R).
\end{equation}

\begin{lemma}\label{lem:description of gs}
Let $([\Lambda_1],[\Lambda_2])\in X_{\Pstd}$ and let
$(y_1,y_2)\in X_{i+1}\times X_{j+1}$ such that \(\rho^{(0)}(y_1,y_2)=([\Lambda_1],[\Lambda_2]).\) Then
\begin{equation}\label{eq:description g1g2}
    ([g_1(s)\Lambda_1],[g_2(s)\Lambda_2])
    =
    R\Bigl(\rho^{(0)}\bigl(d_1(s)y_1,d_2(s)y_2\bigr)\Bigr).
\end{equation}
\end{lemma}
\begin{proof}
Since $g_i(s)$ and $R_i$ map $P_i^{(0)}$ to $P_i^{\infty}$ for $i=1,2$, we obtain that
\[
A_1(s):=\left.R_1^{-1}g_1(s)\right|_{P^{(0)}_1}\in \GL(P^{(0)}_1),
\quad
A_2(s):=\left.R_2^{-1}g_2(s)\right|_{P^{(0)}_2}\in \GL(P^{(0)}_2).
\]
To prove the claim, we compute the matrices representing $A_1(s)$ and $A_2(s)$ in the ordered bases \((\mathbf e_1,\dots,\mathbf e_{i+1})\) and \((\mathbf e_{i+2},\dots,\mathbf e_{n+1})\)  respectively.

For $A_1(s)$, we have
\[
A_1(s)\mathbf e_1
=
R_1^{-1}\!\left(\frac{e^s}{2}v^+\right)
=
e^s\mathbf e_1,
\quad
A_1(s)\mathbf e_q=\mathbf e_q\quad(2\le q\le i+1).
\]
Hence
\[
[A_1(s)]_{B_1}=\diag(e^s,1,\dots,1)=e^{\frac{s}{i+1}}d_1(s).
\]

For $A_2(s)$, we have
\[
A_2(s)\mathbf e_q=\mathbf e_q\quad(i+2\le q\le n),
\quad
A_2(s)\mathbf e_{n+1}
=
R_2^{-1}\!\left(-\frac{e^s}{2}v^-\right)
=
e^s\mathbf e_{n+1}.
\]
Hence
\[
[A_2(s)]_{B_2}=\diag(1,\dots,1,e^s)=e^{\frac{s}{j+1}}d_2(s).
\]

Therefore, for $([\Lambda_1],[\Lambda_2])\in X_{\Pstd}$ and \((y_1,y_2)\in X_{i+1}\times X_{j+1}\) such that  $\rho^{(0)}(y_1,y_2)=([\Lambda_1],[\Lambda_2])$, we obtain that
\[
([A_1(s)\La_1],[A_2(s)\La_2])=\rho^{(0)}(e^{\frac{s}{i+1}}d_1(s)y_1,e^{\frac{s}{j+1}}d_2(s)y_2)=\rho^{(0)}(d_1(s)y_1,d_2(s)y_2).
\]

Applying $R$, we obtain \eqref{eq:description g1g2}.
\end{proof}
For $x\in G/\Ga$, $y=(y_1,y_2)\in X_{i+1}\times X_{j+1}$, $k\in K$, and $T>0$,
we define the following measure on $G/\Ga\times X_{i+1}\times X_{j+1}$ by
\begin{equation}\label{eq:def of msr on XiXj Hij case}
   \mu_{x,y,k,T}(F):=
   \frac{1}{\vol(G_T)}
   \int_{H_{i,j}}\int_{b_T(k,h)}
   F\bigl(a(s)hx,d_1(s)h_1y_1,d_2(s)h_2^*y_2\bigr)\omega(s)\,ds\,dh,
\end{equation}
where $b_T(k,h)$ is given in \eqref{eq:def of bTkh}, and \(\omega(s)=\sinh(s)^i\cosh(s)^j.\)

Define $\Psi:G/\Ga\times X_{i+1}\times X_{j+1}\to G/\Ga\times X_{\Pinfty}$ by:
\begin{align}
    \Psi(x,y)&:=\bigl(x,R\rho^{(0)}(y)\bigr),\quad x\in G/\Ga,\;y\in X_{i+1}\times X_{j+1}, \label{eq:def of Psi}
\end{align}
and, for $k\in K$, define
\begin{equation}\label{eq:def kPsi}
    k\Psi(x,y):=\bigl(kx,k\cdot (R\rho^{(0)}(y))\bigr),
\end{equation}
where in the second coordinate $k$ acts as in \eqref{eq:def of action on X}.

\begin{lemma}\label{lem:replacing by averages in GmodD times expading in XiXj}
Fix $x_0\in G/\Ga$, let $([\Lambda_1],[\Lambda_2])\in X_{\Pstd}$, and choose
 $y_0\in X_{i+1}\times X_{j+1}$ with \(\rho^{(0)}(y_0)=([\Lambda_1],[\Lambda_2]).\) 
 
 Suppose that for every $k\in K$,
\begin{equation}\label{eq:requirement for equid in nondeg case}
    \underset{T\to\infty}{\weaklim\lim}\ \mu_{x_0,y_0,k,T}=\mu_{k,\infty}.
\end{equation}
Then, for every $F\in C_c(G/\Ga\times X)$,
\begin{align*}
    \lim_{T\to\infty}\frac{1}{\vol(G_T)}
    \int_{H_{i,j}}\int_{b_T(k,h)}
    F\bigl(ka(s)hx_0,ka(s)h\cdot([\Lambda_1],[\Lambda_2])\bigr)\omega(s)\,ds\,dh
    =
    (k\Psi)_*\mu_{k,\infty}(F).
\end{align*}
\end{lemma}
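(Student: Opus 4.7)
My strategy is to reduce the target integral to $\mu_{x_0,y_0,k,T}(F\circ k\Psi)$ up to an error that vanishes as $T\to\infty$, and then invoke the weak-$*$ convergence hypothesis \eqref{eq:requirement for equid in nondeg case}.

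First, I verify $F \circ k\Psi \in C_c(G/\Ga \times X_{i+1} \times X_{j+1})$: since $k\Psi$ is continuous and restricts to a homeomorphism onto $G/\Ga \times k\cdot X_{\Pinfty}$, and $X_{\Pinfty}$ is closed in $X$ (a fiber of $\pi$), the preimage of $\text{supp}(F)$ under $k\Psi$ is compact. Applying Lemma \ref{lem:description of gs} to the element $([\hh\La_1],[\vh^*\La_2])\in X_{\Pstd}$, which corresponds under $\rho^{(0)}$ to $(\hh y_1, \vh^* y_2)$, one obtains
\begin{equation*}
R\rho^{(0)}\bigl((d_1(s)\hh y_1, d_2(s)\vh^* y_2)\bigr) = ([\hg(s)\hh\La_1], [\vg(s)\vh^*\La_2]).
\end{equation*}
Thus $\mu_{x_0,y_0,k,T}(F\circ k\Psi)$ is the $\omega(s)\,ds\,dh$-average of $F(ka(s)hx_0,\, k\cdot([\hg(s)\hh\La_1],[\vg(s)\vh^*\La_2]))$ over $b_T(k,h)\times H_{i,j}$, normalized by $\vol(G_T)$, and by hypothesis \eqref{eq:requirement for equid in nondeg case} converges to $(k\Psi)_*\mu_{k,\infty}(F)$.

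Next I would show this average differs from the target by $o_T(1)$. Expanding the target, $ka(s)h\cdot([\La_1],[\La_2]) = k\cdot([a(s)\hh\La_1],[a(s)^*\vh^*\La_2])$, and setting $g_s:=a(s)\hg(s)^{-1}$, $g'_s := a(s)^*\vg(s)^{-1}$, Lemma \ref{lem:difference of gs and as} gives $g_s, g'_s\to I$ as $s\to\infty$, with $a(s)\hh\La_1 = g_s\cdot \hg(s)\hh\La_1$ and analogously for $\La_2$. Given $\epsilon>0$, the argument of Corollary \ref{cor:H component can be bounded by paying epsilon} (which applies to any uniformly bounded integrand via Lemma \ref{lem:the set dtL is epsilon negilble}) lets me restrict both averages to $s\in b_{T,L}(k,h)$ for $L = L(\epsilon)$ sufficiently large, paying total error at most $2\epsilon\|F\|_\infty$; moreover this forces $\|h\|\leq c_0 e^L$. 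On the restricted region, $s\geq \log T - L\to\infty$ uniformly as $T\to\infty$, so $g_s, g'_s \to I$ uniformly. Uniform continuity of $F$ on a compact thickening of its support, together with continuity of the action of $\SL(n+1,\R)\times\SL(n+1,\R)$ on pairs of homothety classes of lattices, then yields
\begin{equation*}
\bigl|F(ka(s)hx_0,\,k\cdot([g_s\hg(s)\hh\La_1],[g'_s\vg(s)\vh^*\La_2])) - F(ka(s)hx_0,\,k\cdot([\hg(s)\hh\La_1],[\vg(s)\vh^*\La_2]))\bigr| < \epsilon
\end{equation*}
uniformly on the restricted region for $T$ large (note that both points lie in $X$, the first being the $ka(s)h\in G$-action on $([\La_1],[\La_2])$). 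Therefore the target differs from $\mu_{x_0,y_0,k,T}(F\circ k\Psi)$ by at most $O(\epsilon)$ for $T$ large, and since $\epsilon$ is arbitrary the conclusion follows.

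The main subtlety is the uniform approximation step: the lattice $[\hg(s)\hh\La_1]\in X_{\Pinfty}$ generally escapes every compact subset as $s\to\infty$ since $d_1(s)$ escapes every compact set in $\SL(i+1,\R)$. The resolution is that compact support of $F$ restricts the effective range of integration to configurations where both compared points lie in a fixed compact subset of $X$; on any such compact set $F$ is uniformly continuous, and the independent-perturbation action of $(g_s, g'_s)$ on pairs of lattices provides the required uniform estimate once $g_s, g'_s$ are close enough to $I$.
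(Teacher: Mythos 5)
Your proof is correct and follows the route the paper intends: the paper states this lemma as ``a direct corollary'' of Lemma \ref{lem:difference of gs and as} and Lemma \ref{lem:description of gs}, and your argument is exactly that reduction, with the one genuinely needed extra detail --- truncating to $b_{T,L}(k,h)$ via Corollary \ref{cor:H component can be bounded by paying epsilon} so that $s\to\infty$ uniformly and $a(s)\hg(s)^{-1}, a(s)^*\vg(s)^{-1}\to I$ uniformly --- spelled out rather than left implicit. Your identification of the escaping-to-infinity issue and its resolution via uniform continuity of $F$ under the ambient $\SL(n+1,\R)\times\SL(n+1,\R)$-action is the right way to make the paper's omitted step rigorous.
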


The above lemma follows as a direct corollary of Lemmata~\ref{lem:difference of gs and as}--\ref{lem:description of gs} as follows. By applying
Lemma~\ref{lem:description of gs} to the point
\(
h\cdot([\Lambda_1],[\Lambda_2])
=
([h_1\Lambda_1],[h_2^*\Lambda_2])
\),
and then using Lemma~\ref{lem:difference of gs and as} to replace
\(
([g_1(s)h_1\Lambda_1],[g_2(s)h_2^*\Lambda_2])
\)
by
\(
a(s)h\cdot([\Lambda_1],[\Lambda_2])
\)
inside the test function. The contribution of bounded values of \(s\)
is negligible after normalization by \(\vol(G_T)\), exactly as in the proof of Lemma~\ref{lem:assymp of bT integral for gen case}.

\subsection{The case of $U$}

The discussion here is analogous to the previous subsection treating the case
$H=H_{i,j}$.
Recall the pair of orthogonal degenerate subspaces
$\Pinfty=(P_1^\infty,P_2^\infty)$ from \eqref{eq:def of Pinfty}.
We consider the representation
\begin{equation}\label{eq:def rho infinity}
    \rho^\infty:\GL(i+1,\R)\times \GL(j+1,\R)\to \GL(P_1^\infty)\times\GL(P_2^\infty),
\end{equation}
defined by expressing linear transformations
$P_1^\infty\to P_1^\infty$ and $P_2^\infty\to P_2^\infty$
in the ordered bases
\[
B_1^\infty:=(v^+,\mathbf{e}_2,\dots,\mathbf{e}_{i+1}),
\quad
B_2^\infty:=(v^-,\mathbf{e}_{i+2},\dots,\mathbf{e}_n),
\]
respectively.

For $\mathbf{x}=(\boldsymbol a,\boldsymbol b)\in\R^{n-1}=\R^i\times\R^j$, we define
\begin{equation}\label{eq:defining upsilon}
\upsilon(\mathbf{x})
=
\upsilon(\boldsymbol a,\boldsymbol b)
:=
\left(
\begin{bmatrix}
1&\boldsymbol a\\
\mathbf 0&I_i
\end{bmatrix},
\begin{bmatrix}
1&\boldsymbol b\\
\mathbf 0&I_j
\end{bmatrix}
\right)
\in \GL(i+1,\R)\times\GL(j+1,\R).
\end{equation}
We also define
\begin{equation}\label{eq:diagonal pushing in unip case}
\delta(s)
:=
\left(
\begin{bsmallmatrix}
e^{\frac{is}{i+1}}&0\\
0&e^{-\frac{s}{i+1}}I_i
\end{bsmallmatrix},
\begin{bsmallmatrix}
e^{\frac{js}{j+1}}&0\\
0&e^{-\frac{s}{j+1}}I_j
\end{bsmallmatrix}
\right),
\quad s>0.
\end{equation}

Before proving the next lemma, recall that
\begin{equation}\label{eq:ux-on-lightlike-basis}
u(\mathbf x)v^+=v^+,\quad
u(\mathbf x)\mathbf e_q=x_qv^+ + \mathbf e_q
\quad(2\le q\le n).
\end{equation}
Also, using the explicit formula for $u(\mathbf x)$, one checks that
\begin{equation}\label{eq:uxstar-on-lightlike-basis}
u(\mathbf x)^*v^-=v^-,
\quad
u(\mathbf x)^*\mathbf e_q=x_qv^-+\mathbf e_q
\quad(2\le q\le n).
\end{equation}

\begin{lemma}\label{lem:description of AU action}
Let $([\Lambda_1],[\Lambda_2])\in X_{\Pinfty}$ and let
$y\in X_{i+1}\times X_{j+1}$ such that \(\rho^\infty(y)=([\Lambda_1],[\Lambda_2]).\) Then, for every $\mathbf x=(\boldsymbol a,\boldsymbol b)\in\R^i\times\R^j$ and every $s>0$,
\begin{equation}\label{eq:AU-action-via-rho-infty}
    a(s)u(\mathbf x)\cdot([\Lambda_1],[\Lambda_2])
    =
    \rho^\infty\bigl((\delta(s)\upsilon(\mathbf x))\cdot y\bigr).
\end{equation}
\end{lemma}

\begin{proof}
Since $a(s)u(\mathbf x)$ preserves $P_1^\infty$, and
$(a(s)u(\mathbf x))^*$ preserves $P_2^\infty$, it is enough to compute the
representing matrices of
\[
a(s)u(\mathbf x)\big|_{P_1^\infty}
\quad\text{and}\quad
(a(s)u(\mathbf x))^*\big|_{P_2^\infty}
\]
in the bases $B_1^\infty$ and $B_2^\infty$.

Using \eqref{eq:a(s) eigenval eqn} and \eqref{eq:ux-on-lightlike-basis}, we get \(a(s)u(\mathbf x)v^+=e^sv^+,\) and, 
\[
a(s)u(\mathbf x)\mathbf e_q
=
a(s)(x_qv^+ + \mathbf e_q)
=
e^sx_qv^+ + \mathbf e_q,\quad \textup{for } 2\le q\le i+1.
\]
Therefore the representing matrix of $a(s)u(\mathbf x)\big|_{P_1^\infty}$
in the basis $B_1^\infty$ is
\[
\begin{bmatrix}
e^s& e^s\boldsymbol a\\
0&I_i
\end{bmatrix}
=
\begin{bmatrix}
e^s&0\\
0&I_i
\end{bmatrix}
\begin{bmatrix}
1&\boldsymbol a\\
0&I_i
\end{bmatrix}
=
e^{\frac{s}{i+1}}
\begin{bsmallmatrix}
e^{\frac{is}{i+1}}&0\\
0&e^{-\frac{s}{i+1}}I_i
\end{bsmallmatrix}
\begin{bmatrix}
1&\boldsymbol a\\
0&I_i
\end{bmatrix}.
\]

Similarly, using \eqref{eq:uxstar-on-lightlike-basis} and the fact that
$a(s)^*v^-=e^sv^-$ and $a(s)^*\mathbf e_q=\mathbf e_q$ for $2\le q\le n$, we get
\[
(a(s)u(\mathbf x))^*v^-=e^sv^-,
\]
and, for $i+2\le q\le n$,
\[
(a(s)u(\mathbf x))^*\mathbf e_q
=
a(s)^*(x_qv^-+\mathbf e_q)
=
e^sx_qv^-+\mathbf e_q.
\]
Therefore the representing matrix of $(a(s)u(\mathbf x))^*\big|_{P_2^\infty}$
in the basis $B_2^\infty$ is
\[
\begin{bmatrix}
e^s& e^s\boldsymbol b\\
0&I_j
\end{bmatrix}
=
\begin{bmatrix}
e^s&0\\
0&I_j
\end{bmatrix}
\begin{bmatrix}
1&\boldsymbol b\\
0&I_j
\end{bmatrix}
=
e^{\frac{s}{j+1}}
\begin{bsmallmatrix}
e^{\frac{js}{j+1}}&0\\
0&e^{-\frac{s}{j+1}}I_j
\end{bsmallmatrix}
\begin{bmatrix}
1&\boldsymbol b\\
0&I_j
\end{bmatrix}.
\]

Since scalar homotheties are trivial in the projective actions on
$X_{i+1}$ and $X_{j+1}$, the extra scalar factors
$e^{\frac{s}{i+1}}$ and $e^{\frac{s}{j+1}}$ are absorbed.
This proves \eqref{eq:AU-action-via-rho-infty}.
\end{proof}

For $x\in G/\Ga$, $y=(y_1,y_2)\in X_{i+1}\times X_{j+1}$, $k\in K$, and $T>0$,
define a measure on $G/\Ga\times X_{i+1}\times X_{j+1}$ by
\begin{equation}\label{eq:def of msr on XiXj U case}
   \nu_{x,y,k,T}(F)
   :=
   \frac{1}{\vol(G_T)}
   \int_{\R^{n-1}}\int_{b_T(k,u(\mathbf x))}
   F\bigl(a(s)u(\mathbf x)x,\delta(s)\upsilon(\mathbf x)y\bigr)e^{(n-1)s}\,ds\,d\mathbf x,
\end{equation}
where $b_T(k,u(\mathbf x))$ is given in \eqref{eq:def of bTkh}.

If $n=2$, then either $i=0$ or $j=0$, so one of $X_{i+1},X_{j+1}$ is a singleton.
In that case, we identify $X_{i+1}\times X_{j+1}$ with $X_2$, and
\eqref{eq:def of msr on XiXj U case} becomes
\begin{equation}\label{eq:def of msr on XiXj U case n2 case}
   \nu_{x,y,k,T}(F)
   :=
   \frac{1}{\vol(G_T)}
   \int_{\R}\int_{b_T(k,u(t))}
   F\bigl(a(s)u(t)x,\delta(s)\upsilon(t)y\bigr)e^s\,ds\,dt,
\end{equation}
where \(
\upsilon(t)=
\begin{bsmallmatrix}
1&t\\
0&1
\end{bsmallmatrix}.
\)

We consider the map $\Psi^\infty:G/\Ga\times X_{i+1}\times X_{j+1}\to G/\Ga\times X_{\Pinfty}$,
\begin{align}
    \Psi^\infty(x,y)&:=(x,\rho^\infty(y))\label{eq:def of Psiinfty},\;x\in G/\Ga,\;y\in X_{i+1}\times X_{j+1},
\end{align}
and, for $k\in K$, we define
\begin{equation}\label{eq:def kPsiinfty}
    k\Psi^\infty(x,y):=(kx,k\cdot\rho^\infty(y)),\;x\in G/\Ga,\;y\in X_{i+1}\times X_{j+1},
\end{equation}
where in the second coordinate $k$ acts as in \eqref{eq:def of action on X}.

We have the following statement, analogous to
Lemma~\ref{lem:replacing by averages in GmodD times expading in XiXj}.
It is an immediate consequence of Lemma~\ref{lem:description of AU action}.

\begin{lemma}\label{lem:replacing by averages in GmodD times expading in XiXj, unipotent case}
Fix $x_0\in G/\Ga$, let $([\Lambda_1],[\Lambda_2])\in X_{\Pinfty}$, and let $y_0\in X_{i+1}\times X_{j+1}$ such that \(\rho^\infty(y_0)=([\Lambda_1],[\Lambda_2]).\) Suppose that for every $k\in K$,
\begin{equation}\label{eq:requirement for equid in degnrt case}
       \underset{T\to\infty}{\weaklim\lim}\ \nu_{x_0,y_0,k,T}=\nu_{k,\infty}.
\end{equation}

Then, for all $F\in C_c(G/\Ga\times X)$,
\begin{align*}
    &\lim_{T\to\infty}
    \frac{1}{\vol(G_T)}
    \int_{\R^{n-1}}\int_{b_T(k,u(\mathbf x))}
    F\bigl(ka(s)u(\mathbf x)x_0,ka(s)u(\mathbf x)\cdot([\Lambda_1],[\Lambda_2])\bigr)
    e^{(n-1)s}\,ds\,d\mathbf x
    \\&=
    (k\Psi^\infty)_*\nu_{k,\infty}(F).
\end{align*}
\end{lemma}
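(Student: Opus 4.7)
The strategy is a direct substitution argument: I rewrite the integrand using Lemma~\ref{lem:description of AU action}, recognize the resulting integral as $\nu_{x_0,y_0,k,T}$ evaluated at $F\circ k\Psi^\infty$, and then apply the weak$^*$ convergence hypothesis~\eqref{eq:requirement for equid in degnrt case}. This is why the author flags the lemma as an immediate corollary of Lemma~\ref{lem:description of AU action}.

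Concretely, since $\rho^\infty(y_0)=([\La_1],[\La_2])$, Lemma~\ref{lem:description of AU action} gives
\[
a(s)u(\mathbf{x})\cdot([\La_1],[\La_2]) \;=\; \rho^\infty\bigl(\de(s)\ups(\mathbf{x})\,y_0\bigr),
\]
so $ka(s)u(\mathbf{x})\cdot([\La_1],[\La_2]) = k\cdot\rho^\infty(\de(s)\ups(\mathbf{x})\,y_0)$. Combining this with the first coordinate and invoking the definition~\eqref{eq:def kPsiinfty} of $k\Psi^\infty$, the integrand is identified with $(F\circ k\Psi^\infty)\bigl(a(s)u(\mathbf{x})x_0,\,\de(s)\ups(\mathbf{x})\,y_0\bigr)$. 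Comparing with the definition~\eqref{eq:def of msr on XiXj U case} of $\nu_{x_0,y_0,k,T}$, the left-hand side of the asserted equality becomes exactly $\nu_{x_0,y_0,k,T}(F\circ k\Psi^\infty)$, and passing $T\to\infty$ using~\eqref{eq:requirement for equid in degnrt case} yields $(k\Psi^\infty)_*\nu_{k,\infty}(F)$, as claimed.

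The one genuine point to verify is that $F\circ k\Psi^\infty$ is an admissible test function for the assumed weak$^*$ limit---i.e., that $F\circ k\Psi^\infty\in C_c(G/\Ga\times X_{i+1}\times X_{j+1})$. Because $\rho^\infty$ identifies $X_{i+1}\times X_{j+1}$ with the fiber $X_{\Pinfty}$ homeomorphically and left $k$-translation is a homeomorphism of $X$, the map $k\Psi^\infty$ is a homeomorphism from $G/\Ga\times X_{i+1}\times X_{j+1}$ onto the closed subset $G/\Ga\times k\cdot X_{\Pinfty}\subset G/\Ga\times X$. Therefore $(k\Psi^\infty)^{-1}(\text{supp}(F))$ is compact and $F\circ k\Psi^\infty$ is continuous. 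This topological check is the main (mild) obstacle; once it is in hand, the conclusion is a one-line application of the hypothesised weak$^*$ convergence.
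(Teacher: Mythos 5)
Your proposal is correct and is precisely the argument the paper intends: the paper offers no written proof, stating only that the lemma ``follows directly from Lemma~\ref{lem:description of AU action}'', and your substitution of the integrand via that lemma, identification of the integral with $\nu_{x_0,y_0,k,T}(F\circ k\Psi^\infty)$, and application of the assumed weak$^*$ limit is exactly that one-line deduction spelled out. The topological check that $F\circ k\Psi^\infty\in C_c(G/\Ga\times X_{i+1}\times X_{j+1})$ is a sensible addition and is handled correctly.
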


\section{Equidistribution statements}\label{sec:equid results}

The goal of this section is to prove the equidistribution statements needed to establish the convergence of \eqref{eq:requirement for equid in nondeg case} and \eqref{eq:requirement for equid in degnrt case}.

We divide the argument into two cases, \(n \geq 3\) and the \(n=2\) case.

For \(n\geq 3\), we will show that every weak-* limit has Haar marginals in each factor and is invariant under a nontrivial unipotent element whose projection to each factor acts ergodically. We will then use the fact that, in this setting, there are no nontrivial algebraic joinings. It will follow that the limiting measure is the full product measure.

The case \(n=2\) is more delicate because of the exceptional isogeny
\(\SL(2,\R)\to \SO(2,1)^\circ\), which allows additional algebraic joinings and therefore requires a separate analysis.

\subsection{The \(n\geq 3\)-case}

Let \(n \geq 3\), and fix nonnegative integers \(i,j\) such that \(i+j=n-1\). Set \(\G := \SO(n,1)^\circ\times \SL(i+1,\R) \times \SL(j+1,\R),\) and consider the subgroup
\[
\mathcal H:=\left\{ (h,\hh,\vh^*):h=\begin{bsmallmatrix}
    \hh & \\
    && \vh 
\end{bsmallmatrix}\in H_{i,j}\right\}\subseteq \G,
\]
where \(H_{i,j}\cong \SO(i+1)\times\SO(j,1)^\circ\) is as in \eqref{eq:DefHij}. We also define
\[
\U:=\{(u(\mathbf{x}),\ups(\mathbf{x})):\mathbf{x}\in\R^{n-1}\},
\]
where \(u(\mathbf{x})\) is given by \eqref{eq:u_x expression} and \(\ups(\mathbf{x})\in \SL(i+1,\R)\times \SL(j+1,\R)\) is defined in \eqref{eq:defining upsilon}.

Next, we consider the diagonalizable one-parameter subgroups of \(\G\) given by
\begin{equation}\label{eq:DefalH}
    \al_{\mathcal H}(t):=(a(t),d_1(t),d_2(t)), \; t\in\R,
\end{equation}
where \(a(t), d_1(t), d_2(t)\) are as in \eqref{eq:expression for a(s)}, \eqref{eq:defining d}, and
\begin{equation}\label{eq:DefalU}
    \al_{\U}(t):=(a(t),\de(t)), \; t\in\R,
\end{equation}
where \(\de(t)\in \SL(i+1,\R)\times \SL(j+1,\R)\) is as in \eqref{eq:diagonal pushing in unip case}.

\begin{proposition}\label{prop:HigherDimensionalExpendingPieces}
Let \(\mathcal L\) denote either \(\mathcal H\) or \(\U\), and let \(\al_{\mathcal L}\) be the corresponding diagonalizable subgroup defined above. Let \(\tilde\Gamma:=\Gamma\times\SL(i+1,\Z)\times\SL(j+1,\Z)\), where \(\Gamma\subseteq\SO(n,1)^\circ\) is a lattice, and fix \(x\in \G/\tilde\Gamma\). Let \(w\in C_c(\mathcal L)\) be nonnegative, and define
\begin{equation*}
    \mu^{\mathcal L}_s(F):=\int_{\mathcal L}F(\alpha_{\mathcal L}(s)lx)\,w(l)\,dl,
    \; F\in C_c(\G/\tilde\Gamma),
\end{equation*}
where \(dl\) denotes a Haar measure on \(\mathcal L\). Then
\[
\lim_{s\to\infty}\mu^{\mathcal L}_s(F)
=
\mu_{\G/\tilde\Gamma}(F)\int_{\mathcal L}w(l)\,dl,
\]
for all \(F\in C_c(\G/\tilde\Gamma)\).
\end{proposition}

\subsubsection{Generalities}

To prove Proposition~\ref{prop:HigherDimensionalExpendingPieces}, we will use the following general facts about equidistribution on homogeneous spaces.

Throughout this subsection, let \(\mathcal G\subseteq \mathrm{SL}(m,\mathbb R)\) be a connected semisimple Lie group, let \(\Gamma\subseteq \mathcal G\) be a lattice, and let \(\mathcal H\subseteq \mathcal G\) be a closed connected subgroup. We denote by \(\mathfrak g\) and \(\mathfrak h\) their Lie algebras.

Let \(\{\alpha(t)\}_{t\in\mathbb R}\subseteq \mathcal G\) be a nontrivial one-parameter subgroup consisting of diagonalizable elements. Then \(\mathfrak g\) admits a decomposition \(\mathfrak g=\mathfrak g^{-}\oplus \mathfrak g^{0}\oplus \mathfrak g^{+}\),  where \(\mathfrak g^{0}\) is the fixed-point subspace of \(\Ad(\alpha(t))\), and \(\mathfrak g^{+}\) and \(\mathfrak g^{-}\) are the sums of the eigenspaces corresponding, respectively, to the positive and negative weights of the one-parameter group \(\Ad(\alpha(t))\). We write \(\pi^{+}:\mathfrak g\to\mathfrak g^{+}\) for the canonical projection associated with this direct sum decomposition.

\begin{theorem}\label{thm:margulis banana trick}
Assume further that \(\mathcal G\) is simple and that \(\pi^+(\mathfrak h)=\mathfrak g^+\). Let \(x_0\in \mathcal G/\Gamma\), and let \(w\in C_c(\mathcal H)\) be nonnegative. Then
\begin{equation}\label{eq:margulis banana equidistribution}
     \lim_{s\to\infty}\int_{\mathcal H}f(\alpha(s)hx_0)\,w(h)\,dh
     =
     \mu_{\mathcal G/\Gamma}(f)\mu_{\mathcal H}(w)
\end{equation}
for all \(f\in C_c(\mathcal G/\Gamma)\). Here, \(\mu_{\mathcal G/\Gamma}\) denotes the \(\mathcal G\)-invariant probability measure on \(\mathcal G/\Gamma\), and \(\mu_{\mathcal H}\) denotes a Haar measure on \(\mathcal H\).
\end{theorem}

\begin{proof}
The result follows from mixing of the \(\{\alpha(t)\}\)-action with respect to \(\mu_{\mathcal G/\Gamma}\), together with the classical ``banana trick'' due to Margulis; see \cite{EskinMcMullen}.
\end{proof}

To prove Proposition~\ref{prop:HigherDimensionalExpendingPieces}, we will use the above theorem to obtain equidistribution in each simple factor. We will supplement this with the following lemma, which ensures that every limiting measure is invariant under a suitable unipotent element.

\begin{lemma}\label{lem: unipotent invariance general}
Let \(w\in C_c(\mathcal H)\) be nonnegative, and consider the measures
\begin{equation}\label{eq:measures of s, for unip invar}
     \mu_s(f):=\int_{\mathcal H}f(\alpha(s)hx_0)\,w(h)\,dh,
     \quad f\in C_c(\mathcal G/\Gamma).
\end{equation}
Suppose that \(\boldsymbol{v}\in \mathfrak g^+\) is an eigenvector for \(\{\Ad(\alpha(t))\}_{t\in\R}\) and that \(\boldsymbol{v}\in \pi^+(\mathfrak h)\). Then every weak-* limit of \(\mu_s\) as \(s\to\infty\) is invariant under left translation by \(\exp(\boldsymbol{v})\).
\end{lemma}

\begin{proof}
Suppose that \(\Ad(\alpha(t))\boldsymbol{v}=e^{\chi(t)}\boldsymbol{v},\) where \(\chi(t)\to\infty\) as \(t\to\infty\), and assume that there exists \(\boldsymbol{h}\in \mathfrak h\) such that \(\boldsymbol{v}=\pi^+(\boldsymbol{h})\). Since \(\ker(\pi^+)=\mathfrak g^0\oplus\mathfrak g^{-}\), there exists \(\boldsymbol{u}\in \mathfrak g^0\oplus\mathfrak g^{-}\) such that \(\boldsymbol{h}=\boldsymbol{v}+\boldsymbol{u}.\)

Denote \(\boldsymbol{h}_t:=e^{-\chi(t)}\boldsymbol{h}.\) Then
\[
\Ad(\alpha(t))\boldsymbol{h}_t=\boldsymbol{v}+O(e^{-\chi(t)}).
\]

Now suppose that \(\mu_{s_i}\to \nu\) in the weak-* topology. For \(f\in C_c(\mathcal G/\Gamma)\), using the left invariance of Haar measure on \(\mathcal H\), we obtain
\begin{align*}
\nu(f)
&=\lim_{i\to\infty}\int_{\mathcal H}f(\alpha(s_i)hx_0)\,w(h)\,dh\\
&=\lim_{i\to\infty}\int_{\mathcal H}
f(\alpha(s_i)\exp(\boldsymbol{h}_{s_i})hx_0)\,
w(\exp(\boldsymbol{h}_{s_i})h)\,dh\\
&=\lim_{i\to\infty}\int_{\mathcal H}
f(\exp(\Ad(\alpha(s_i))\boldsymbol{h}_{s_i})\alpha(s_i)hx_0)\,
w(\exp(\boldsymbol{h}_{s_i})h)\,dh\\
&=\lim_{i\to\infty}\int_{\mathcal H}
f(\exp(\boldsymbol{v}+O(e^{-\chi(s_i)}))\alpha(s_i)hx_0)\,
w(\exp(O(e^{-\chi(s_i)}))h)\,dh\\
&=\lim_{i\to\infty}\int_{\mathcal H}
f(\exp(\boldsymbol{v})\alpha(s_i)hx_0)\,w(h)\,dh\\
&=(\exp(\boldsymbol{v})_*\nu)(f),
\end{align*}
where in the second-to-last line we use the uniform continuity of \(f\) and \(w\). This proves the claimed invariance.
\end{proof}

The following proposition shows that, once the limiting measure has Haar marginals and is invariant under a suitable unipotent element, full product equidistribution follows provided there are no proper algebraic joinings.

\begin{proposition}\label{prop:if projections are full and no iso then equid is full}
Suppose that \(\G=\G_1\times\cdots\times \G_m\) and \(\Gamma=\Gamma_1 \times \cdots \times \Gamma_m\), where each \(\G_l\) is a connected Lie group and each \(\Gamma_l\) is a lattice in \(\G_l\). For each \(l\), let \(\pi_l:\G\to\G_l\) and \(\bar\pi_l:\G/\Gamma\to \G_l/\Gamma_l\) denote the natural projections. Suppose that \(u\in\G\) is an \(\Ad\)-unipotent element such that \(\pi_l(u)\) acts ergodically with respect to the \(\G_l\)-invariant probability measure \(\mu_{\G_l/\Gamma_l}\) on \(\G_l/\Gamma_l\).

Let \(\mu\) be a \(u\)-invariant Borel probability measure on \(\G/\Gamma\) such that
\[
(\bar\pi_l)_*(\mu)=\mu_{\G_l/\Gamma_l}
\]
for each \(l\). Assume further that there is no proper subgroup \(L\subseteq \G\) such that \(L\Gamma/\Gamma\) is closed and \(\pi_l(L)=\G_l\) for every \(l\). Then \(\mu\) is \(\G\)-invariant.
\end{proposition}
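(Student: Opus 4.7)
The plan is to combine Ratner's measure classification theorem with the projection hypothesis to pin down $\mu$. Since $u$ is $\Ad$-unipotent and $\mu$ is $u$-invariant and ergodic, Ratner's theorem (in the form covering ergodic invariant measures for cyclic unipotent actions, as in Ratner and in Margulis--Tomanov) produces a closed connected subgroup $H \subseteq \G$ containing $u$ and a point $g\Ga \in \G/\Ga$ such that $\mu$ is the $H$-invariant probability measure supported on the closed orbit $Hg\Ga/\Ga$. To recast the Ratner subgroup so it stabilizes the identity coset, set $L := g^{-1}Hg$. Left translation by $g^{-1}$ is a homeomorphism of $\G/\Ga$ carrying $Hg\Ga/\Ga$ onto $L\Ga/\Ga$, so $L\Ga/\Ga$ is closed, and $\pi_l(L) = \pi_l(g)^{-1}\pi_l(H)\pi_l(g)$ equals $\G_l$ if and only if $\pi_l(H) = \G_l$.

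The main step is to verify $\pi_l(H) = \G_l$ for every $l$. The pushforward $(\bar\pi_l)_\ast \mu$ is $\pi_l(H)$-invariant and supported on $\pi_l(H)\pi_l(g)\Ga_l/\Ga_l$; by hypothesis it equals $\mu_{\G_l/\Ga_l}$, which has full support. Hence $\pi_l(H)\pi_l(g)\Ga_l = \G_l$, equivalently $\pi_l(H)\cdot \Lambda_l = \G_l$, where $\Lambda_l := \pi_l(g)\Ga_l\pi_l(g)^{-1}$ is a (countable) lattice in $\G_l$. If $\overline{\pi_l(H)}$ were a proper closed subgroup of $\G_l$, then the quotient $\G_l/\overline{\pi_l(H)}$ would be a positive-dimensional manifold and hence uncountable, contradicting the fact that it is covered by the image of the countable set $\Lambda_l$. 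So $\overline{\pi_l(H)} = \G_l$, and the algebraicity of the Ratner subgroup ensures that $\pi_l(H)$ is already closed in $\G_l$, giving $\pi_l(H) = \G_l$.

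Finally, $L$ is a subgroup with $L\Ga/\Ga$ closed and $\pi_l(L) = \G_l$ for every $l$; the hypothesis then forces $L = \G$, whence $H = gLg^{-1} = \G$ and $\mu = \mu_{\G/\Ga}$. The main obstacle is the application of Ratner's classification to a single $\Ad$-unipotent element (rather than a one-parameter unipotent flow): the cyclic group $\langle u\rangle$ is discrete in the one-parameter subgroup through $u$, so $u$-invariance does not a priori give flow invariance, and one must invoke the extension of measure rigidity to cyclic unipotent actions. A secondary technical point is the closedness of $\pi_l(H)$ in $\G_l$, handled via the algebraic nature of the Ratner subgroup.
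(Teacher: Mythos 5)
Your overall architecture coincides with the paper's: apply Ratner's measure classification to the $u$-ergodic measure $\mu$ (her theorem does cover a single $\Ad$-unipotent element, so the point you flag as the "main obstacle" is handled by the literature, exactly as in the paper), conjugate the resulting subgroup to the identity coset, deduce $\pi_l(L)=\G_l$ from the projection hypothesis, and invoke the non-existence of a proper such $L$. You skip the paper's ergodic decomposition step, which is redundant under the stated ergodicity hypothesis, and consequently you never use the ergodicity of $\pi_l(u)$; that is legitimate for the statement as written. However, your justification of the key step $\pi_l(H)=\G_l$ has a genuine gap. Full support of $\mu_{\G_l/\Ga_l}$ only yields that $\pi_l(H)\pi_l(g)\Ga_l$ is \emph{dense} in $\G_l$ (or, arguing via the pushforward of a measure concentrated on the closed orbit $Hg\Ga/\Ga$, that this set is conull); it does not yield the set equality $\pi_l(H)\Lambda_l=\G_l$ on which your cardinality argument rests. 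A countable union of cosets of a proper closed subgroup can certainly be dense, so "the quotient is uncountable yet covered by countably many cosets" is not a contradiction you are entitled to. Likewise, the parenthetical claim that $\pi_l(H)$ is closed in $\G_l$ "by algebraicity" is unjustified: projections of closed subgroups of a product need not be closed, and you need genuine equality $\pi_l(L)=\G_l$ (not density) to invoke the hypothesis on $L$.

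The step is repairable by a measure-theoretic rather than a cardinality argument. Since $\mu$ is concentrated on $Hg\Ga/\Ga$, the Haar measure $(\bar\pi_l)_\ast\mu=\mu_{\G_l/\Ga_l}$ gives full measure to the orbit $\pi_l(H)\pi_l(g)\Ga_l/\Ga_l$. The orbit of the connected immersed subgroup $\pi_l(H)$ through a point with discrete stabilizer is a countable union of embedded submanifolds of dimension $\dim\pi_l(H)$, hence Haar-null unless $\dim\pi_l(H)=\dim\G_l$; in that case $\pi_l(H)$ contains a neighborhood of the identity, is therefore open, and equals the connected group $\G_l$. With this substitution your argument closes and agrees with the paper's proof.
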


\begin{proof}
Consider the ergodic decomposition of \(\mu\) with respect to the \(u\)-action:
\[
\mu=\int_Y \mu_y\,d\bar\mu(y),
\]
where each \(\mu_y\) is a \(u\)-ergodic \(u\)-invariant Borel probability measure on \(\G/\Gamma\), and \((Y,\bar\mu)\) is a Borel probability space on which \(u\) acts trivially.

Fix \(1\le l\le m\). Since \(\bar\pi_l\) is equivariant with respect to the action of \(u\) on \(\G/\Gamma\) and the action of \(\pi_l(u)\) on \(\G_l/\Gamma_l\), the measure \((\bar\pi_l)_*(\mu_y)\) is \(\pi_l(u)\)-ergodic and \(\pi_l(u)\)-invariant for \(\bar\mu\)-almost every \(y\in Y\). Hence
\[
(\bar\pi_l)_*\mu=\int_Y (\bar\pi_l)_*(\mu_y)\,d\bar\mu(y)
\]
is an ergodic decomposition of \((\bar\pi_l)_*\mu\).

By hypothesis, \((\bar\pi_l)_*\mu=\mu_{\G_l/\Gamma_l}\), and this measure is \(\pi_l(u)\)-ergodic. Therefore, for \(\bar\mu\)-almost every \(y\in Y\),
\begin{equation}\label{eq:muy-Gl}
(\bar\pi_l)_*(\mu_y)=\mu_{\G_l/\Gamma_l}.
\end{equation}

Now let \(y\in Y\) be such that \(\mu_y\) is \(u\)-ergodic and \eqref{eq:muy-Gl} holds for every \(1\le l\le m\). By Ratner's measure-classification theorem, there exists a closed connected subgroup \(L\subseteq \G\) such that \(\mu_y\) is \(L\)-invariant and supported on a closed orbit \(Lg\Gamma/\Gamma\) for some \(g\in \G\). For each \(l\), the projection
\[
\pi_l(Lg\Gamma/\Gamma)=\pi_l(L)\pi_l(g)\Gamma_l/\Gamma_l
\]
has full \(\mu_{\G_l/\Gamma_l}\)-measure by \eqref{eq:muy-Gl}. This is only possible if \(\pi_l(L)=\G_l\), since otherwise \(\pi_l(L)\) is a proper lower-dimensional subgroup and the projected orbit has zero Haar measure.

Finally, let \(L_1=g^{-1}Lg\). Then \(L_1\Gamma/\Gamma\) is closed, and \(\pi_l(L_1)=\G_l\) for every \(l\). By assumption, this forces \(L_1=\G\), and hence \(L=\G\). Therefore \(\mu_y\) is the \(\G\)-invariant probability measure \(\mu_{\G/\Gamma}\).

Thus \(\mu_y=\mu_{\G/\Gamma}\) for \(\bar\mu\)-almost every \(y\), and consequently \(\mu=\mu_{\G/\Gamma}\).
\end{proof}
\subsubsection{Proving Proposition \ref{prop:HigherDimensionalExpendingPieces}}

Let \(n\geq 3\) and let \(i,j\) be nonnegative integers such that \(i+j=n-1\). Set
\begin{equation}\label{eq:SimpleGpsGi}
    \G_1:=\SO(n,1)^\circ,\quad \G_2:=\SL(i+1,\R),\quad \G_3:=\SL(j+1,\R),
\end{equation}
and let \(\mathfrak g_l\) denote the corresponding Lie algebras.

\medskip
\noindent\textbf{The subgroup \(\mathcal H\).} Let
\begin{equation}\label{eq:DefAli}
    \al_1(t):=a(t),\quad \al_2(t):=d_1(t),\quad \al_3(t):=d_2(t),
\end{equation}
where \(a(t),d_1(t),d_2(t)\) are as in \eqref{eq:expression for a(s)} and \eqref{eq:defining d}. Let \(\mathfrak g_l=\mathfrak g_l^-\oplus \mathfrak g_l^0\oplus \mathfrak g_l^+\) be the corresponding weight-space decomposition for \(\Ad(\al_l(t))\), and let \(\pi_l^+:\mathfrak g_l\to \mathfrak g_l^+\) be the associated projection. Then
\begin{equation}\label{eq:positive_spaces_H}
    \mathfrak g_1^+=\mathfrak u,\quad
    \mathfrak g_2^+=\left\{\begin{bsmallmatrix}
        0&\boldsymbol{x}\\
        0&0
    \end{bsmallmatrix}:\boldsymbol{x}\in\R^i\right\},\quad
    \mathfrak g_3^+=\left\{\begin{bsmallmatrix}
        0&0\\
        \boldsymbol{y}&0
    \end{bsmallmatrix}:\boldsymbol{y}\in\R^j\right\},
\end{equation}
where \(\mathfrak u=\Lie(U)\).

Now denote
\begin{equation}\label{eq:SimpleGpsHi}
    \HH_1:=H_{i,j},\quad \HH_2:=\SO(i+1),\quad \HH_3:=\SO(j,1)^\circ,
\end{equation}
and let \(\mathfrak h_l:=\Lie(\HH_l)\).

\begin{lemma}\label{lem:H-factorwise-surjectivity}
For each \(l=1,2,3\) we have \(\pi_l^+(\mathfrak h_l)=\mathfrak g_l^+.\)
\end{lemma}

\begin{proof}
For \(\boldsymbol{a}\in\R^i\) and \(\boldsymbol{b}\in\R^j\), consider
\begin{align}
    \boldsymbol{v}_1:=\begin{bsmallmatrix}
        0&(\boldsymbol{a},\boldsymbol{b})&0\\
        -(\boldsymbol{a},\boldsymbol{b})^{\tr}&\mathbf{0}&(\boldsymbol{a},\boldsymbol{b})^{\tr}\\
        0&(\boldsymbol{a},\boldsymbol{b})&0
    \end{bsmallmatrix},\;\; \boldsymbol{v}_2:=\begin{bmatrix}
        0&2\boldsymbol{a}\\
        0&0
    \end{bmatrix}\;\; \boldsymbol{v}_3:=\begin{bmatrix}
        0&0\\
        -2\boldsymbol{b}&0
    \end{bmatrix}. \label{eq:PositveWeightVecsi}
\end{align}
Let
\begin{align}
    \boldsymbol{h}_1:=2\begin{bsmallmatrix}
        0&(\boldsymbol{a},\mathbf{0})&0\\
        -(\boldsymbol{a},\mathbf{0})^{\tr}&\mathbf{0}&(\mathbf{0},\boldsymbol{b})^{\tr}\\
        0&(\mathbf{0},\boldsymbol{b})&0
    \end{bsmallmatrix},\;\;
    \boldsymbol{h}_2:=2\begin{bmatrix}
        0&\boldsymbol{a}\\
        -\boldsymbol{a}^{\tr}&0
    \end{bmatrix},\;\;
    \boldsymbol{h}_3&:=2\begin{bmatrix}
        0&-\boldsymbol{b}^{\tr}\\
        -\boldsymbol{b}&0
    \end{bmatrix}.\label{eq:VecsInhi}
\end{align}
Then \(\boldsymbol{h}_l\in\mathfrak h_l\) for \(l=1,2,3\), and
\[
\Ad(\al_l(t))(\boldsymbol{h}_l-\boldsymbol{v}_l)=e^{-t}(\boldsymbol{h}_l-\boldsymbol{v}_l),
\quad l=1,2,3.
\]
Hence \(\boldsymbol{h}_l-\boldsymbol{v}_l\in\ker(\pi_l^+)\), so \(\pi_l^+(\boldsymbol{h}_l)=\boldsymbol{v}_l,\; l=1,2,3.\)

As \(\boldsymbol{a}\) and \(\boldsymbol{b}\) vary, the vectors \(\boldsymbol{v}_1,\boldsymbol{v}_2,\boldsymbol{v}_3\) run through all of \(\mathfrak g_1^+,\mathfrak g_2^+,\mathfrak g_3^+\). This proves the lemma.
\end{proof}

Let
\[
\HH:=\left\{(h,\hh,\vh^*):h=\begin{bsmallmatrix}
    \hh & \\
    && \vh
\end{bsmallmatrix}\in H_{i,j}\right\}\subseteq \HH_1\times\HH_2\times\HH_3,
\]
and let \(\mathfrak h:=\Lie(\HH)\). Set
\[
\al(t):=(\al_1(t),\al_2(t),\al_3(t)),
\]
and let \(\pi^+:\mathfrak g\to \mathfrak g^+\) be the positive-weight projection for \(\Ad(\al(t))\).

\begin{lemma}\label{lem:lemma implying unip invariance for Hij averages}
For \(\boldsymbol{a}\in\R^i\) and \(\boldsymbol{b}\in\R^j\), set
\[
\boldsymbol{v}^+_{\boldsymbol{a},\boldsymbol{b}}
=
\left(
\begin{bsmallmatrix}
        0&(\boldsymbol{a},\boldsymbol{b})&0\\
        -(\boldsymbol{a},\boldsymbol{b})^{\tr}&\mathbf{0}&(\boldsymbol{a},\boldsymbol{b})^{\tr}\\
        0&(\boldsymbol{a},\boldsymbol{b})&0
\end{bsmallmatrix},
\begin{bmatrix}
        0&2\boldsymbol{a}\\
        0&0
\end{bmatrix},
\begin{bmatrix}
        0&0\\
        -2\boldsymbol{b}&0
\end{bmatrix}
\right).
\]
Then
\[
\Ad(\al(s))\boldsymbol{v}^+_{\boldsymbol{a},\boldsymbol{b}}=e^s\boldsymbol{v}^+_{\boldsymbol{a},\boldsymbol{b}},
\quad
\boldsymbol{v}^+_{\boldsymbol{a},\boldsymbol{b}}\in \pi^+(\mathfrak h).
\]
\end{lemma}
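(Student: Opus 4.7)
The statement has two independent parts and both are verified by a direct calculation in each of the three factors of $\G = \SO(n,1)^\circ \times \SL(i+1,\R) \times \SL(j+1,\R)$. My plan is to handle the eigenvalue claim first, and then to exhibit a concrete element of $\mathfrak{h}$ that projects to $\bold{v}^+_{\bold{a},\bold{b}}$.

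\textbf{Eigenvalue under $\Ad(\al(s))$.} For the second coordinate, $\bold{v}_{\bold{a}}:=\begin{bsmallmatrix} 0 & 2\bold{a}\\ 0 & 0\end{bsmallmatrix}$ is a standard upper-triangular root vector for the diagonal $d_1(s)=\diag(e^{is/(i+1)}, e^{-s/(i+1)}I_i)$, and a direct matrix conjugation shows $\Ad(d_1(s))\bold{v}_{\bold{a}} = e^{is/(i+1)}\cdot e^{s/(i+1)}\,\bold{v}_{\bold{a}} = e^s\bold{v}_{\bold{a}}$; the third coordinate is symmetric. For the first coordinate, I would pass to the basis $\{v^+,\mathbf{e}_2,\ldots,\mathbf{e}_n,v^-\}$ in which $a(s)$ is diagonal with eigenvalues $e^s,1,\ldots,1,e^{-s}$. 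Applying the first-coordinate matrix of $\bold{v}^+_{\bold{a},\bold{b}}$ to these basis vectors shows that its image lies entirely in the sum of the $(v^+,\mathbf{e}_k)$ and $(\mathbf{e}_k,v^-)$ matrix units, each of which has $\Ad(a(s))$-weight $e^s$. Adding up the three coordinates gives the claimed weight $e^s$ for $\bold{v}^+_{\bold{a},\bold{b}}$.

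\textbf{Exhibiting an element of $\mathfrak{h}$ that projects to $\bold{v}^+_{\bold{a},\bold{b}}$.} The Lie algebra $\mathfrak{h}$ consists of triples $(X, X_1, -X_2^\tr)$ with $X=\diag(X_1,X_2)$, $X_1\in\mathfrak{so}(i+1)$ and $X_2\in\mathfrak{so}(j,1)$. I would try the concrete ansatz
\begin{equation*}
X_1 = \begin{bsmallmatrix} 0 & 2\bold{a} \\ -2\bold{a}^\tr & 0 \end{bsmallmatrix}, \qquad X_2 = \begin{bsmallmatrix} 0 & 2\bold{b}^\tr \\ 2\bold{b} & 0 \end{bsmallmatrix},
\end{equation*}
which lie in $\mathfrak{so}(i+1)$ and in $\mathfrak{so}(j,1)$ respectively (the latter using $X_2^\tr J' + J'X_2 = 0$ with $J'=\diag(I_j,-1)$). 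The projection $\pi^+$ in the $\mathfrak{sl}(i+1)$ factor just reads off the strictly upper-right block, so $\pi^+(X_1) = \bold{v}_2$; since $X_2^\tr = X_2$, we have $-X_2^\tr = -X_2$, whose strictly lower-left block gives exactly $\bold{v}_3$. For the $\mathfrak{so}(n,1)$ factor I would again use the $\{v^\pm,\mathbf{e}_k\}$ basis to extract the $e^s$-weight component of $X=\diag(X_1,X_2)$: one computes $X\mathbf{e}_k = a_{k-1}(v^++v^-)$ for $k\leq i+1$ and $X\mathbf{e}_k = b_{k-i-1}(v^+-v^-)$ for $i+2\leq k\leq n$, so the $(v^+,\mathbf{e}_k)$-entries combine to the row $(\bold{a},\bold{b})=\mathbf{y}$ and the $(\mathbf{e}_k,v^-)$-entries combine to the column $-2\mathbf{y}^\tr$, reproducing precisely $\bold{v}_1$ after changing back to the standard basis.

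\textbf{What is (not) the hard part.} This lemma is essentially a bookkeeping statement: all weights are obvious from the weight diagrams of $d_1(s),d_2(s),a(s)$, and the only mildly subtle point is matching the factors of $2$ that appear in $\bold{v}^+_{\bold{a},\bold{b}}$ with the natural form of skew-symmetric/pseudo-skew-symmetric elements of $\mathfrak{so}(i+1)\oplus\mathfrak{so}(j,1)$, together with the fact that in the second coordinate the $\mathfrak{h}$-embedding uses $h_1$ while in the third it uses $h_2^\ast$ (hence the transpose that flips upper-right into lower-left). Once the basis change to $\{v^+,\mathbf{e}_2,\ldots,\mathbf{e}_n,v^-\}$ is made, both claims reduce to checking matching coefficients entry by entry, so no genuine obstacle is expected.
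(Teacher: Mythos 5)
Your proposal is correct and follows essentially the same route as the paper: a direct weight computation for the eigenvalue claim, and then exhibiting the element $(X,X_1,-X_2^\tr)\in\mathfrak h$ with $X_1=\begin{bsmallmatrix}0&2\bold{a}\\-2\bold{a}^\tr&0\end{bsmallmatrix}$, $X_2=\begin{bsmallmatrix}0&2\bold{b}^\tr\\2\bold{b}&0\end{bsmallmatrix}$, which is exactly the paper's $\bold{h}_{\bold{a},\bold{b}}$. The only cosmetic difference is that you extract the $e^s$-weight component of $X$ directly in the basis $\{v^+,\mathbf{e}_2,\dots,\mathbf{e}_n,v^-\}$, whereas the paper checks equivalently that $\bold{h}_{\bold{a},\bold{b}}-\bold{v}^+_{\bold{a},\bold{b}}$ is an $e^{-s}$-eigenvector of $\Ad(\al(s))$.
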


\begin{proof}
Let \(\boldsymbol{v}_1,\boldsymbol{v}_2,\boldsymbol{v}_3\) and \(\boldsymbol{h}_1,\boldsymbol{h}_2,\boldsymbol{h}_3\) be as in
\eqref{eq:PositveWeightVecsi} and \eqref{eq:VecsInhi}, and set
\[
\boldsymbol{h}:=(\boldsymbol{h}_1,\boldsymbol{h}_2,\boldsymbol{h}_3)\in \mathfrak h.
\]
By Lemma~\ref{lem:H-factorwise-surjectivity}, \(\pi_l^+(\boldsymbol{h}_l)=\boldsymbol{v}_l,\; l=1,2,3\).

Hence 
\[
\pi^+(\boldsymbol{h})
=
\bigl(\pi_1^+(\boldsymbol{h}_1),\pi_2^+(\boldsymbol{h}_2),\pi_3^+(\boldsymbol{h}_3)\bigr)
=
(\boldsymbol{v}_1,\boldsymbol{v}_2,\boldsymbol{v}_3)
=
\boldsymbol{v}^+_{\boldsymbol{a},\boldsymbol{b}}.
\]
The eigenvalue relation is immediate from the definitions.
\end{proof}

\medskip
\noindent\textbf{The subgroup \(\U\).} Write
\[
\de(t)=\bigl(\de_1(t),\de_2(t)\bigr)\in \SL(i+1,\R)\times\SL(j+1,\R),
\]
where \(\de(t)\) is as in \eqref{eq:diagonal pushing in unip case}, and set
\begin{equation}\label{eq:DefBetai}
    \be_1(t):=a(t),\quad \be_2(t):=\de_1(t),\quad \be_3(t):=\de_2(t).
\end{equation}
Let \(\mathfrak g_l=\mathfrak g_l^-\oplus\mathfrak g_l^0\oplus\mathfrak g_l^+\) be the weight decomposition for \(\Ad(\be_l(t))\), and let \(\varpi_l^+:\mathfrak g_l\to\mathfrak g_l^+\) be the associated projection. Then
\begin{equation}\label{eq:positive_spaces_U}
\mathfrak g_1^+=\mathfrak u,\quad
\mathfrak g_2^+=\left\{
\begin{bmatrix}
0&\boldsymbol{a}\\
0&0
\end{bmatrix}:\boldsymbol{a}\in\R^i\right\},\quad
\mathfrak g_3^+=\left\{
\begin{bmatrix}
0&\boldsymbol{b}\\
0&0
\end{bmatrix}:\boldsymbol{b}\in\R^j\right\}.
\end{equation}

Let
\[
\U_1:=U\subseteq \G_1,\quad
\U_2:=\left\{\begin{bmatrix}
1&\boldsymbol{a}\\
0&I_i
\end{bmatrix}:\boldsymbol{a}\in\R^i\right\}\subseteq \G_2,\quad
\U_3:=\left\{\begin{bmatrix}
1&\boldsymbol{b}\\
0&I_j
\end{bmatrix}:\boldsymbol{b}\in\R^j\right\}\subseteq \G_3,
\]
and let \(\mathfrak u_l:=\Lie(\U_l)\).

\begin{lemma}\label{lem:U-factorwise-surjectivity}
For each \(l=1,2,3\) we have \(\varpi_l^+(\mathfrak u_l)=\mathfrak g_l^+.\)
\end{lemma}

\begin{proof}
For \(l=1\), we have \(\mathfrak u_1=\mathfrak u=\mathfrak g_1^+\). For \(l=2,3\), the groups \(\U_l\) are precisely the expanding horospherical subgroups of \(\be_l(t)\), so \(\mathfrak u_l=\mathfrak g_l^+\). The claim follows.
\end{proof}

Let \(\mathfrak u_\Delta:=\Lie(\U)\subseteq \mathfrak g_1\oplus\mathfrak g_2\oplus\mathfrak g_3\), and write \(\boldsymbol{x}=(\boldsymbol{a},\boldsymbol{b})\in \R^i\times \R^j.\) Define
\[
\boldsymbol{w}^+_{\boldsymbol{a},\boldsymbol{b}}
:=
\left(
\begin{bsmallmatrix}
0&(\boldsymbol{a},\boldsymbol{b})&0\\
-(\boldsymbol{a},\boldsymbol{b})^{\tr}&\mathbf 0&(\boldsymbol{a},\boldsymbol{b})^{\tr}\\
0&(\boldsymbol{a},\boldsymbol{b})&0
\end{bsmallmatrix},
\begin{bmatrix}
0&\boldsymbol{a}\\
0&0
\end{bmatrix},
\begin{bmatrix}
0&\boldsymbol{b}\\
0&0
\end{bmatrix}
\right).
\]

\begin{lemma}\label{lem:U-product-unipotent}
For every \(\boldsymbol{a}\in\R^i\) and \(\boldsymbol{b}\in\R^j\),
\[
\Ad\bigl((a(s),\de(s))\bigr)\boldsymbol{w}^+_{\boldsymbol{a},\boldsymbol{b}}
=
e^s\boldsymbol{w}^+_{\boldsymbol{a},\boldsymbol{b}},
\quad
\boldsymbol{w}^+_{\boldsymbol{a},\boldsymbol{b}}\in \pi^+(\mathfrak u_\Delta),
\]
where \(\pi^+\) denotes the positive-weight projection for \(\Ad((a(s),\de(s)))\).
\end{lemma}

\begin{proof}
The vector \(\boldsymbol{w}^+_{\boldsymbol{a},\boldsymbol{b}}\) is tangent at the identity to the one-parameter subgroup
\[
t\mapsto \bigl(u(t(\boldsymbol{a},\boldsymbol{b})),\ups(t(\boldsymbol{a},\boldsymbol{b}))\bigr)\subseteq \U,
\]
hence \(\boldsymbol{w}^+_{\boldsymbol{a},\boldsymbol{b}}\in \mathfrak u_\Delta\). By \eqref{eq:positive_spaces_U},
\(\mathfrak u_\Delta\subseteq \mathfrak g^+\), and therefore \(\pi^+(\boldsymbol{w}^+_{\boldsymbol{a},\boldsymbol{b}})=\boldsymbol{w}^+_{\boldsymbol{a},\boldsymbol{b}}.\)

The eigenvalue relation follows from the definitions of \(a(s)\), \(\de_1(s)\), and \(\de_2(s)\).
\end{proof}

\begin{proof}[Proof of Proposition \ref{prop:HigherDimensionalExpendingPieces} for \(n\geq 3\)]
Let $\Ga\subseteq\SO(n,1)^\circ$ be a lattice, and set
\[
\Gamma_1:=\Gamma,\quad
\Gamma_2:=\SL(i+1,\Z),\quad
\Gamma_3:=\SL(j+1,\Z),
\]
so that \(\tilde\Gamma=\Gamma_1\times\Gamma_2\times\Gamma_3.\)

We first verify that there are no proper algebraic joinings. Set
\[
R:=\PGL(i+1,\R)\times\PGL(j+1,\R),
\]
and let \(\pi_{\mathrm{left}}:\G_1\times R\to \G_1\) and \(\pi_{\mathrm{right}}:\G_1\times R\to R\) be the natural projections.

We claim that if \(L\subseteq \G_1\times R\) is a closed connected subgroup such that both \(\pi_{\mathrm{left}}|_L\) and \(\pi_{\mathrm{right}}|_L\) are onto,  then \(L=\G_1\times R\).

Indeed, let \(\mathfrak l\subseteq \mathfrak g_1\oplus \mathfrak r\) be the Lie algebra of \(L\), where
\[
\mathfrak g_1=\mathfrak{so}(n,1),
\quad
\mathfrak r
=
\mathfrak{pgl}(i+1,\R)\oplus \mathfrak{pgl}(j+1,\R)
\cong
\mathfrak{sl}_{i+1}(\R)\oplus \mathfrak{sl}_{j+1}(\R).
\]
Since both projections are surjective, the differentials \(d\pi_{\mathrm{left}}|_{\mathfrak l}:\mathfrak l\to \mathfrak g_1\) and \(d\pi_{\mathrm{right}}|_{\mathfrak l}:\mathfrak l\to \mathfrak r\) are surjective. Consider
\[
\mathfrak n:=\ker(d\pi_{\mathrm{right}}|_{\mathfrak l})\subseteq \mathfrak g_1\oplus\{0\}.
\]
Identifying \(\mathfrak n\) with its projection to \(\mathfrak g_1\), we see that \(\mathfrak n\) is an ideal in \(\mathfrak g_1\). Since \(\mathfrak g_1=\mathfrak{so}(n,1)\) is simple for \(n\geq 3\), either \(\mathfrak n=0\) or \(\mathfrak n=\mathfrak g_1\oplus\{0\}\).

If \(\mathfrak n=\mathfrak g_1\oplus\{0\}\), then \(\mathfrak g_1\oplus\{0\}\subseteq \mathfrak l\), and surjectivity of
\(d\pi_{\mathrm{right}}\) gives \(\mathfrak l=\mathfrak g_1\oplus\mathfrak r\), hence \(L=\G_1\times R\).

Assume now that \(\mathfrak n=0\). Then \(d\pi_{\mathrm{right}}|_{\mathfrak l}\) is an isomorphism, so \(\mathfrak l\) is the graph of a surjective Lie algebra homomorphism \(\varphi:\mathfrak r\to \mathfrak g_1\). Writing
\[
\mathfrak r=\mathfrak r_1\oplus\mathfrak r_2,
\quad
\mathfrak r_1\cong\mathfrak{sl}_{i+1}(\R),
\quad
\mathfrak r_2\cong\mathfrak{sl}_{j+1}(\R),
\]
we have \([\mathfrak r_1,\mathfrak r_2]=0\), so \(\varphi(\mathfrak r_1)\) and \(\varphi(\mathfrak r_2)\) are commuting ideals in \(\mathfrak g_1\). Since \(\mathfrak g_1\) is simple, one of them is \(0\) and the other is all of \(\mathfrak g_1\). Thus for some \(m\in\{i+1,j+1\}\) there exists a surjective Lie algebra homomorphism \(\mathfrak{sl}_m(\R)\twoheadrightarrow \mathfrak{so}(n,1).\)

As both Lie algebras are simple, this must be an isomorphism. But \(\rank_\R(\mathfrak{sl}_m(\R))=m-1\) and \(\rank_\R(\mathfrak{so}(n,1))=1\), so this is possible only when \(m=2\), which would force
\(\mathfrak{sl}_2(\R)\cong \mathfrak{so}(2,1)\), contrary to \(n\geq 3\). This proves the claim.

Now fix \(\mathcal L\in\{\mathcal H,\U\}\), and let \(\nu\) be a weak-* limit of \(\mu_s^{\mathcal L}\). We verify the hypotheses of Proposition~\ref{prop:if projections are full and no iso then equid is full} uniformly in \(\mathcal L\).

For \(k=1,2,3\), define
\[
\mathcal L_k:=
\begin{cases}
\HH_k,& \mathcal L=\mathcal H,\\[2mm]
\U_k,& \mathcal L=\U,
\end{cases}
\quad
\al^{\mathcal{L}}_k(t):=
\begin{cases}
\al_k(t),& \mathcal L=\mathcal H,\\[2mm]
\be_k(t),& \mathcal L=\U.
\end{cases}
\]
Then the \(k\)-th marginal of \(\mu_s^{\mathcal L}\) is of the form
\[
f\longmapsto \int_{\mathcal L_k} f(\al^{\mathcal{L}}_k(s)\ell x_k)\,w_k(\ell)\,d\ell,
\]
for some nonnegative \(w_k\in C_c(\mathcal L_i)\). By Theorem~\ref{thm:margulis banana trick}, together with
Lemma~\ref{lem:H-factorwise-surjectivity} when \(\mathcal L=\mathcal H\) and
Lemma~\ref{lem:U-factorwise-surjectivity} when \(\mathcal L=\U\), it follows that \((\bar\pi_k)_*\nu=\mu_{\G_k/\Gamma_l}\) for \((\bar\pi_k)_*\nu=\mu_{\G_k/\Gamma_l},
k=1,2,3,\) with the obvious convention in the trivial-factor cases \(i=0\) or \(j=0\).

Next, by Lemma~\ref{lem: unipotent invariance general}, together with
Lemma~\ref{lem:lemma implying unip invariance for Hij averages} when \(\mathcal L=\mathcal H\) and
Lemma~\ref{lem:U-product-unipotent} when \(\mathcal L=\U\), the measure \(\nu\) is invariant under a nontrivial unipotent element whose projection to each nontrivial factor is nontrivial. By \cite{Moore1966}, these projections act ergodically on the corresponding factor spaces.

Thus, all the hypotheses of Proposition~\ref{prop:if projections are full and no iso then equid is full} are now satisfied, so \(\nu=\mu_{\G/\tilde\Gamma}\).
\end{proof}

\subsection{The case of $n=2$}
We first treat the unipotent case, and then the two possibilities $H_{1,0}$ and $H_{0,1}$.

\medskip

\noindent\textbf{The unipotent case.}
Let
\begin{equation}\label{eq:deltasandupss}
    \de(s):=\begin{bmatrix}
e^{s/2}&0\\
0&e^{-s/2}
\end{bmatrix},
\quad
\ups(t):=\begin{bmatrix}
1&t\\
0&1
\end{bmatrix}.
\end{equation}
Let $G:=\SO(2,1)^\circ$. Then, there exists an isogeny $\Phi:\SL(2,\R)\to G$ such that:
\[
a(s)=\Phi(\de(s))
=
\exp\!\begin{bsmallmatrix}
0&0&s\\
0&0&0\\
s&0&0
\end{bsmallmatrix},
\quad
u(t)=\Phi(\ups(t))
=
\exp\!\begin{bsmallmatrix}
0&t&0\\
-t&0&t\\
0&t&0
\end{bsmallmatrix}.
\]
Let $X_2:=\SL(2,\R)/\SL(2,\Z)$, let $\Gamma\subseteq G$ be a lattice, and let $w\in C_c(\R)$ be nonnegative. Given $(x_0,y_0)\in G/\Gamma\times X_2$, for each $s\in\R$, let $\mu_s^U$ be the measure defined by  
\[
\mu_s^U(F):=\int_{\R}F(a(s)u(t)x_0,\de(s)\ups(t)y_0)\,w(t)\,dt,
\quad
\forall\, F\in C_c(G/\Gamma\times X_2).
\]
Set $\widetilde\Gamma:=\Phi^{-1}(\Gamma)$, and let \(\bar\Phi:\SL(2,\R)/\widetilde\Gamma\to G/\Gamma\) be the map induced by $\Phi$. If $\tilde x_0\in \SL(2,\R)/\widetilde\Gamma$ projects to $x_0$, then
\[
\mu_s^U(F)=
\int_{\R}
F\bigl(\bar\Phi(\de(s)\ups(t)\tilde x_0),\de(s)\ups(t)y_0\bigr)\,w(t)\,dt.
\]

\newcommand{\dsl}{\SL^\Delta(2,\R)}
\begin{definition}\label{def:commensurability}
Let
\[
\dsl:=\{(\Phi(g),g):g\in\SL(2,\R)\}\subseteq G\times \SL(2,\R).
\]
We say that $(x_0,y_0)\in G/\Gamma\times X_2$ is \emph{commensurable} if the $\dsl$-orbit of $(x_0,y_0)$ is closed.
\end{definition}

\begin{proposition}\label{prop:unipotent equidistribution low dim}
Let $w\in C_c(\R)$ be nonnegative, and let $(x_0,y_0)\in G/\Gamma\times X_2$. Then, for every
$F\in C_c(G/\Gamma\times X_2)$, the following hold:
\begin{enumerate}
    \item If $(x_0,y_0)$ is not commensurable, then
    \begin{equation}\label{eq:full}
        \lim_{s\to\infty}\mu_s^U(F)
        =
        \mu_{G/\Gamma}\otimes\mu_{X_2}(F)\int_{\R}w(t)\,dt.
    \end{equation}
    \item If $(x_0,y_0)$ is commensurable, then
    \begin{equation}\label{eq:L}
        \lim_{s\to\infty}\mu_s^U(F)
        =
        \mu_{\dsl(x_0,y_0)}(F)\int_{\R}w(t)\,dt,
    \end{equation}
    where $\mu_{\dsl(x_0,y_0)}$ denotes the $\dsl$-invariant probability measure on the closed orbit $\dsl(x_0,y_0)$.
\end{enumerate}
\end{proposition}

\begin{proof}
Let
\[
M:=G\times \SL(2,\R),\quad H:=\dsl\subseteq M.
\]
The subgroup \(\{(\Phi(\ups(t)),\ups(t)):t\in\R\}\subseteq H\) is the expanding horospherical subgroup of $(a(s),\de(s))\in H$. Let $L\subseteq M$ be the smallest closed connected subgroup containing $H$ such that $L(x_0,y_0)$ is closed. Since $H\cong \SL(2,\R)$ is generated by one-parameter unipotent subgroups, $L(x_0,y_0)$ carries a unique $L$-invariant probability measure $\mu_{L(x_0,y_0)}$; see \cite[Property-D]{MargulisProceedingsVarna86}, \cite[(2.3) Theorem]{Shah1991}. By \cite[Theorem~1.4]{Shah1996LimitDO},
\[
\lim_{s\to\infty}\mu_s^U(F)
=
\mu_{L(x_0,y_0)}(F)\int_{\R}w(t)\,dt.
\]

Let $p_1:M\to G$ and $p_2:M\to \SL(2,\R)$ be the coordinate projections. Since $p_1(H)=G$, we have $p_1(L)=G$. Set
\[
N:=L\cap (G\times\{e\}).
\]
Then $p_1(N)$ is a connected normal subgroup of the simple group $G$, so either $N=\{e\}$ or $N=G\times\{e\}$.

If $N=G\times\{e\}$, then $L$ contains $G\times\{e\}$ and $H$, hence \(L=(G\times\{e\})H=M\). Therefore, $\mu_{L(x_0,y_0)}=\mu_{G/\Gamma}\otimes\mu_{X_2}$, and \eqref{eq:full} follows.

If $N=\{e\}$, then $p_2|_L:L\to \SL(2,\R)$ is injective. Since $H\subseteq L$ and $p_2(H)=\SL(2,\R)$, it is also surjective. Hence $p_2|_L$ is an isomorphism. For each $g\in\SL(2,\R)$, the unique point of $L$ with second coordinate $g$ must be $(\Phi(g),g)\in H$, since $H\subseteq L$. Thus $L=H=\dsl$. Therefore $(x_0,y_0)$ is commensurable and $\mu_{L(x_0,y_0)}=\mu_{\dsl(x_0,y_0)}$, which gives \eqref{eq:L}.
\end{proof}

\medskip

\noindent\textbf{The cases $H_{1,0}$ and $H_{0,1}$.}
Recall
\[
H_{1,0}=
\left\{
\begin{bsmallmatrix}
\cos\theta&-\sin\theta&0\\
\sin\theta&\cos\theta&0\\
0&0&1
\end{bsmallmatrix}
:\theta\in\R
\right\},
\quad
H_{0,1}=
\left\{
\begin{bsmallmatrix}
1&0&0\\
0&\cosh t&\sinh t\\
0&\sinh t&\cosh t
\end{bsmallmatrix}
:t\in\R
\right\},
\]
and let
\[
h_{1,0}(t):=
\begin{bsmallmatrix}
\cos t&-\sin t\\
\sin t&\cos t
\end{bsmallmatrix},
\quad
h_{0,1}(t):=
\begin{bsmallmatrix}
\cosh t&\sinh t\\
\sinh t&\cosh t
\end{bsmallmatrix}.
\]

Let $w\in C_c(\R)$ be nonnegative, let $(x_0,y_0)\in G/\Gamma\times X_2$, and for each $s\in\R$, define
\begin{align*}
\mu_s^{1,0}(F)
&:=
\int_{\R}
F\!\left(
a(s)\begin{bsmallmatrix} h_{1,0}(t)&0\\ 0&1\end{bsmallmatrix}x_0,\,
\de(s)h_{1,0}(t)y_0
\right)w(t)\,dt,\\
\mu_s^{0,1}(F)
&:=
\int_{\R}
F\!\left(
a(s)\begin{bsmallmatrix} 1&0\\ 0&h_{0,1}(t)\end{bsmallmatrix}x_0,\,
(\de(s)h_{0,1}(t))^*y_0
\right)w(t)\,dt,
\end{align*}
for all $F\in C_c(G/\Gamma\times X_2)$.

\begin{proposition}\label{prop:FullEquidLowDimNonDeg}
For $(i,j)=(1,0),\,(0,1)$ and every $F\in C_c(G/\Gamma\times X_2)$,
\[
\lim_{s\to\infty}\mu_s^{i,j}(F)
=
\mu_{G/\Gamma}\otimes\mu_{X_2}(F)\int_{\R}w(t)\,dt.
\]
\end{proposition}
We observe that there exists an isogeny $\Phi:\SL(2,\R)\to G$ such that
\begin{align}
\Phi\begin{bsmallmatrix}
e^{s/2}&0\\
0&e^{-s/2}
\end{bsmallmatrix}
&=a(s),\nonumber\\
\Phi(h_{1,0}(\theta/2))
&=
\begin{bsmallmatrix}
h_{1,0}(\theta)&0\\
0&1
\end{bsmallmatrix},\label{eq:isogeny formulae}\\
\Phi(h_{0,1}(t/2))
&=
\begin{bsmallmatrix}
1&0\\
0&h_{0,1}(t)
\end{bsmallmatrix}.\nonumber
\end{align}
Using this isogeny, we reduce the proposition to an equidistribution statement in $\SL(2,\R)/\tilde\Ga\times\SL(2,\R)/\SL(2,\Z)$. Let $\widetilde\Gamma:=\Phi^{-1}(\Gamma)$ and let \(\bar\Phi:\SL(2,\R)/\widetilde\Gamma\to G/\Gamma\) be the induced map. Also let $\Psi(g):=(g^\top)^{-1}$; since $\Psi(\SL(2,\Z))=\SL(2,\Z)$, it induces an involution of $X_2$, still denoted by $\Psi$.

If $\tilde x_0\in \SL(2,\R)/\widetilde\Gamma$ projects to $x_0$, then by \eqref{eq:isogeny formulae},
\[
\mu_s^{1,0}(F)
=
\int_{\R}
F\bigl(\bar\Phi(\de(s)h_{1,0}(t/2)\tilde x_0),\de(s)h_{1,0}(t)y_0\bigr)\,w(t)\,dt.
\]
Similarly,
\[
\mu_s^{0,1}(F)
=
\int_{\R}
F\bigl(\bar\Phi(\de(s)h_{0,1}(t/2)\tilde x_0),\Psi(\de(s)h_{0,1}(t)\tilde y_0)\bigr)\,w(t)\,dt.
\]
Here $\tilde y_0=\Psi(y_0)$.

The required equidistribution is therefore a consequence of the following product statement.

\begin{proposition}\label{prop:equid of expanding circle in the product}
Let $\Gamma=\Gamma_1\times\Gamma_2\subseteq \SL(2,\R)\times\SL(2,\R)=:M$ be a lattice, and let $z_0\in M/\Gamma$. Let $I=[a,b]$, and let
\[
\psi(t):=(h(t/2),h(t)),
\]
where $h(t)$ is either $h_{1,0}(t)$ or $h_{0,1}(t)$.

Then, for every continuous nonnegative nonzero function $w:I\to\R$, the probability measures
\[
\eta_s(f):=
\frac{\int_I f\bigl((\de(s),\de(s))\psi(t)z_0\bigr)\,w(t)\,dt}
{\int_I w(t)\,dt},
\quad
f\in C_c(M/\Gamma),
\]
converge to the $M$-invariant probability measure on $M/\Gamma$.
\end{proposition}

We now prove Proposition \ref{prop:equid of expanding circle in the product}. Our main tool is  \cite[Theorem~1.6, Remark~1.7]{Yang_prod_of_SOn1}, which is stated as follows in a form suitable for our needs.

\begin{theorem}\label{thm:lei yang thm}
Let $\Gamma=\Gamma_1\times\Gamma_2\subseteq \SL(2,\R)\times\SL(2,\R)=:M$ be a lattice, and let $z_0\in M/\Gamma$. Let $\varphi=(\varphi_1,\varphi_2):[a,b]\to\R^2$ be analytic. Assume that there exists $s_0\in[a,b]$ such that
\[
(\varphi(t)-\varphi(s_0))^{-1}
:=
\left(
\frac{1}{\varphi_1(t)-\varphi_1(s_0)},
\frac{1}{\varphi_2(t)-\varphi_2(s_0)}
\right)
\]
is defined for almost every $t\in[a,b]$, and assume that
\begin{equation}\label{eq:Yang geometric condition}
\Bigl\{
(\varphi(s_1)-\varphi(s_0))^{-1}
-
(\varphi(s_2)-\varphi(s_0))^{-1}
:
s_1,s_2\in[a,b]
\Bigr\}
\end{equation}
is not contained in any line in $\R^2$. Then, for every subinterval $J\subseteq[a,b]$, every $f\in C_c(M/\Gamma)$, and every continuous function $w:J\to\R$,
\[
\lim_{s\to\infty}
\int_J
f\bigl((\de(s)\ups(\varphi_1(t)),\de(s)\ups(\varphi_2(t)))z_0\bigr)\,w(t)\,dt
=
\mu_{M/\Gamma}(f)\int_J w(t)\,dt,
\]
where $\ups(t)$ and $\de(s)$ are given by \eqref{eq:deltasandupss}.
\end{theorem}

We need the following lemma to verify the geometric condition \eqref{eq:Yang geometric condition} in our proof.

\begin{lemma}\label{lem:verifying Lei geometric condition}
For $\alpha_0,\alpha_1\in\R$, consider the curves
\[
\gamma_{\tan}(t):=
\left(
\frac{1}{\tan(t/2)-\alpha_1},\,
\frac{1}{\tan t-\alpha_0}
\right),\; \gamma_{\tanh}(t):=
\left(
\frac{1}{\tanh(t/2)-\alpha_1},\,
\frac{1}{\tanh t-\alpha_0}
\right)
\]
defined wherever the denominators are nonzero. Then neither image is contained in an affine line in $\R^2$.
\end{lemma}

\begin{proof}
We show the proof for $\gamma_{\tan}$; the proof for \(\gamma_{\tanh}\) is the same as $\tanh(t)=-i\tan(it)$.  

Suppose, toward a contradiction, that its image is contained in an affine line. Then there exist $A,B,C\in\R$, not all zero, such that
\[
\frac{A}{\tan(t/2)-\alpha_1}
+
\frac{B}{\tan t-\alpha_0}
=
C
\]
for all $t$ in some interval on which the curve is defined.

Set $x=\tan(t/2)$. Since \(
\tan t=\frac{2x}{1-x^2},
\) this becomes
\[
\frac{A}{x-\alpha_1}
+
B\,\frac{1-x^2}{\alpha_0x^2+2x-\alpha_0}
=
C.
\]
After clearing denominators,
\[
A(\alpha_0x^2+2x-\alpha_0)+B(1-x^2)(x-\alpha_1)
=
C(x-\alpha_1)(\alpha_0x^2+2x-\alpha_0).
\]
Comparing the coefficients of $x^3$ and of the constant term gives
\[
-B=C\alpha_0,
\quad
-B\alpha_1-A\alpha_0=C\alpha_0\alpha_1.
\]
If $\alpha_0=0$, then the first identity gives $B=0$, and the original relation reduces to
\(
\frac{A}{x-\alpha_1}=C,
\)
which is impossible on an interval unless $A=0$, contradiction. Thus $\alpha_0\neq0$. Substituting $C=-B/\alpha_0$ into the second identity yields $A\alpha_0=0$, hence $A=0$. Comparing the coefficient of $x^2$ now gives
\[
B\alpha_1=\frac{-B}{\alpha_0}(2-\alpha_0\alpha_1),
\]
that is,
\(\alpha_0\alpha_1=-2+\alpha_0\alpha_1,\) a contradiction.
\end{proof}

\begin{proof}[Proof of Proposition \ref{prop:equid of expanding circle in the product}]
We first treat \(h(t)=h_{1,0}(t)\). Let
\[
u_-(x):=\begin{bmatrix}
1&0\\
x&1
\end{bmatrix},
\quad
b(r):=\begin{bmatrix}
r&0\\
0&r^{-1}
\end{bmatrix}.
\]
In view of $u_-(\R) b(\R^\times)\ups(\R)$ being Zariski open and dense in $\SL(2,\R)$, for $t\notin \frac{\pi}{2}+\pi\Z$, we find that
\[
h(t)=u_-(\tan t)\,b(\cos t)\,\ups(-\tan t),
\]
and therefore
\begin{equation}\label{eq:circle decomposition}
\de(s)h(t)=u_-(e^{-s}\tan t)\,b(\cos t)\,\de(s)\ups(-\tan t).
\end{equation}

Let $K:=\text{supp}(w)\subseteq I$. The singular set
\[
\Sigma:=K\cap\Bigl(\Bigl(\frac{\pi}{2}+\pi\Z\Bigr)\cup(\pi+2\pi\Z)\Bigr)
\]
is finite; these are precisely the poles of $\tan t$ and $\tan(t/2)$ on $K$. Fix $\e>0$. Choose an open neighborhood $\mathcal U$ of $\Sigma$ in $I$ such that \(\int_{\mathcal U}w(t)\,dt<\e.\) Since $f$ is bounded,
\[
\left|
\int_{\mathcal U}f\bigl((\de(s)h(t/2),\de(s)h(t))z_0\bigr)\,w(t)\,dt
\right|
\leq
\|f\|_\infty\,\e.
\]

Write \(K\smallsetminus \mathcal U=\bigsqcup_{m=1}^N J_m\) as a finite disjoint union of compact intervals. On each $J_m$, the functions $\tan t$, $\tan(t/2)$, $\cos t$, and $\cos(t/2)$ are continuous and bounded. Fix such an interval $J=[c,d]$.

By uniform continuity of $f$, for every $\e>0$ there exists an identity neighborhood $O_\e\subseteq M$ such that \(|f(gz)-f(z)|\le \e
\) for all \(g\in O_\e,\ z\in M/\Gamma.\)

Since $\tan t$ and $\tan(t/2)$ are bounded on $J$, \eqref{eq:circle decomposition} implies that for all sufficiently large $s$,
\[
\bigl(u_-(e^{-s}\tan(t/2)),u_-(e^{-s}\tan t)\bigr)\in O_\e
\quad\forall\,t\in J.
\]
Next, partition $J$ into finitely many closed intervals $J_1',\dots,J_r'$ and choose $t_\ell\in J_\ell'$ so that
\[
\bigl(b(\cos(t/2)),b(\cos t)\bigr)\bigl(b(\cos(t_\ell/2)),b(\cos t_\ell)\bigr)^{-1}\in O_\e
\quad\forall\,t\in J_\ell'.
\]
Then, for $t\in J_\ell'$ and all sufficiently large $s$,
\[
f\bigl((\de(s)h(t/2),\de(s)h(t))z_0\bigr)
=
f_\ell\bigl((\de(s)\ups(-\tan(t/2)),\de(s)\ups(-\tan t))z_0\bigr)+O(\e),
\]
where
\[
f_\ell(z):=
f\bigl((b(\cos(t_\ell/2)),b(\cos t_\ell))z\bigr).
\]
Hence
\begin{align*}
  &\int_{J_\ell'}
f\bigl((\de(s)h(t/2),\de(s)h(t))z_0\bigr)\,w(t)\,dt
\\
&=
\int_{J_\ell'}
f_\ell\bigl((\de(s)\ups(-\tan(t/2)),\de(s)\ups(-\tan t))z_0\bigr)\,w(t)\,dt
+
O\!\left(\e\int_{J_\ell'}w(t)\,dt\right).  
\end{align*}

We now apply Theorem \ref{thm:lei yang thm} on each $J_\ell'$ with \(\varphi(t)=(-\tan(t/2),-\tan t).\) Fix $s_0\in J_\ell'$. Then $(\varphi(t)-\varphi(s_0))^{-1}$ is defined for all $t\in J_\ell'\smallsetminus\{s_0\}$. To verify \eqref{eq:Yang geometric condition}, suppose the set in \eqref{eq:Yang geometric condition} were contained in a line in $\R^2$. Fixing $s_2\in J_\ell'$, the curve
\[
t\mapsto
(\varphi(t)-\varphi(s_0))^{-1}
-
(\varphi(s_2)-\varphi(s_0))^{-1}
\]
would then be contained in an affine line. But this curve is of the form covered by Lemma \ref{lem:verifying Lei geometric condition}, with \(\alpha_1=-\tan(s_0/2)\) and \(\alpha_0=-\tan(s_0)\).
Hence \eqref{eq:Yang geometric condition} holds. Therefore, by Theorem \ref{thm:lei yang thm},
\[
\lim_{s\to\infty}
\int_{J_\ell'}
f_\ell\bigl((\de(s)\ups(-\tan(t/2)),\de(s)\ups(-\tan t))z_0\bigr)\,w(t)\,dt
=
\mu_{M/\Gamma}(f)\int_{J_\ell'}w(t)\,dt,
\]
where we used the $M$-invariance of $\mu_{M/\Gamma}$.

Summing over $\ell$ and then over $m$, and using the bound on $\mathcal U$, we obtain
\[
\limsup_{s\to\infty}
\left|
\int_I f\bigl((\de(s)h(t/2),\de(s)h(t))z_0\bigr)\,w(t)\,dt
-
\mu_{M/\Gamma}(f)\int_I w(t)\,dt
\right|
\ll
\e\bigl(\|f\|_\infty+\int_I w(t)\,dt\bigr).
\]
Letting $\e\to0$ proves the proposition in the trigonometric case.

The hyperbolic case $h(t)=h_{0,1}(t)$ is identical, using
\[
h(t)=u_-(\tanh t)\,b(\cosh t)\,\ups(\tanh t),
\quad
\de(s)h(t)=u_-(e^{-s}\tanh t)\,b(\cosh t)\,\de(s)\ups(\tanh t),
\]
and applying Theorem \ref{thm:lei yang thm} to \(\varphi(t)=(\tanh(t/2),\tanh t)\). The required geometric condition follows from Lemma \ref{lem:verifying Lei geometric condition} in exactly the same way.
\end{proof}

\section{Proving the main results}

In this section we combine the volume estimates from Section~\ref{sec:vol estiamtes} with the equidistribution results from Section~\ref{sec:equid results}.

The results from the previous sections apply most naturally to integrals of the form
\[
\frac{1}{\vol(G_T)}\int_{G_T}\varphi(g\Ga)\,f\bigl(g\cdot([\La_1],[\La_2])\bigr)\,dg
\]
when $([\La_1],[\La_2])\in X_{\Pstd}$ or $([\La_1],[\La_2])\in X_{\Pinfty}$; see
\eqref{eq:def of ver and hor vspaces} and \eqref{eq:def of Pinfty} to recall \(\Pstd\) and \(\Pinfty\).
For a general point of $X_{r,n+1}$, we reduce to one of these situations by translation as follows.

Let $([\tilde\La_1],[\tilde\La_2])\in X_{r,n+1}$. According to our conventions, either $\tilde\La_1$ is positive definite or degenerate. If $\tilde\La_1$ is positive definite, choose $g_0\in G$ so that
$g_0^{-1}\tilde\La_1\subseteq P^{(0)}_1=\Span_\R\{\mathbf e_1,\dots,\mathbf e_r\}$.
If $\tilde\La_1$ is degenerate, choose $g_0\in G$ so that
$g_0^{-1}\tilde\La_1\subseteq P^\infty_1=\Span_\R\{\mathbf e_1+\mathbf e_{n+1},\mathbf e_2,\dots,\mathbf e_r\}$.
Consider the skewed norm \(  \|g\|_{g_0}:=\|gg_0^{-1}\|\) and denote
\begin{equation*}
    G_T^{g_0}:=\{g\in G:\|g\|_{g_0}\le T\}.
\end{equation*}
Then $G_Tg_0=G_T^{g_0}$, and by right-invariance of Haar measure,
\begin{equation}\label{eq:reducing general integral by translating}
\frac{1}{\vol(G_T)}\int_{G_T}\varphi(g\Ga)f\bigl(g\cdot([\tilde\La_1],[\tilde\La_2])\bigr)\,dg
=
\frac{1}{\vol(G_T^{g_0})}\int_{G_T^{g_0}}\varphi(gg_0^{-1}\Ga)\,f\bigl(g\cdot([\La_1],[\La_2])\bigr)\,dg,
\end{equation}
where \(\)
\[
([\La_1],[\La_2]):=g_0^{-1}\cdot([\tilde\La_1],[\tilde\La_2]),
\]
$([\La_1],[\La_2])\in X_{\Pstd}$ in the positive-definite case, and
$([\La_1],[\La_2])\in X_{\Pinfty}$ in the degenerate case.

\subsection{Partition of unity}\label{sec:PartOfUnity}

Our next goal is to identify the weak-$*$ limits of the measures
$\mu_{x,y,k,T}$ from \eqref{eq:def of msr on XiXj Hij case}
and $\nu_{x,y,k,T}$ from \eqref{eq:def of msr on XiXj U case}.
As in \cite[Proof of Theorem~10.1]{Gorodnik2004DistributionOL},
the key point is a partition-of-unity argument.

\begin{lemma}\label{lem:bounding above and below bT by small O}
Let $H$ denote either $H_{i,j}$ or $U$.
For every $0<\e<1/2$, there exists a symmetric identity neighborhood
$O\subseteq H$ such that for all $h\in H$, $k\in K$, $L>0$, and $u_1,u_2,u_3\in O$,
\[
b_{T(1-\e),L-1}(k,hu_1)\subseteq b_{T,L}(k,hu_2)\subseteq b_{T(1+\e),L+1}(k,hu_3),
\]
where \(b_{T,L}(k,h)\subset\R\) are defined by \eqref{eq:definition of bTL}.
\end{lemma}

\begin{proof}
Choose $0<\delta<\e$ such that $(1+\delta)(1-\e)\le 1$.
Let $C\ge 1$ be such that $\|AB\|\le C\|A\|\,\|B\|$ for all matrices $A,B$.
Choose a symmetric identity neighborhood $O\subseteq H$ such that
\[
\|I-u\|\le \delta/C,\, \quad\forall\,u\in O^2.
\]
If $h_1,h_2\in hO$, then $h_2^{-1}h_1\in O^2$, hence for any $s\in\R$, 
\[
\|ka(s)h_1\|
\le \|ka(s)h_2\|+\|ka(s)h_2(h_2^{-1}h_1-I)\|
\le (1+\delta)\|ka(s)h_2\|.
\]
Interchanging $h_1$ and $h_2$ gives
\[
(1-\delta)\|ka(s)h_2\|\le \|ka(s)h_1\|\le (1+\delta)\|ka(s)h_2\|.
\]

Now let $s\in b_{T(1-\e),L-1}(k,hu_1)$.
Then
\[
s\ge \log(T(1-\e))-(L-1)\ge \log T-L,
\]
and
\[
\|ka(s)hu_2\|\le (1+\delta)\|ka(s)hu_1\|
\le (1+\delta)T(1-\e)\le T.
\]
Hence $s\in b_{T,L}(k,hu_2)$.
The second inclusion is proved in the same way.
\end{proof}

For $H=H_{i,j}$ with $i\neq 0$, or for $H=U$, define $c:K\to\R$ by
\[
c(k_0):=
\frac{\int_H \|k_0a(\infty)h\|^{-(n-1)}\,dh}
{\int_K\int_H \|ka(\infty)h\|^{-(n-1)}\,dh\,dk},\; \textup{for }k_0\in K.
\]
For $H=H_{0,n-1}$ define
\[
c(k_0):=
\frac{\int_H\Bigl(\|k_0a(\infty)h\|^{-(n-1)}+\|k_0a(-\infty)h\|^{-(n-1)}\Bigr)\,dh}
{\int_K\int_H\Bigl(\|ka(\infty)h\|^{-(n-1)}+\|ka(-\infty)h\|^{-(n-1)}\Bigr)\,dh\,dk},\; \textup{for }k_0\in K.
\]

\begin{proposition}\label{prop:convergence of inner integrals}
Let $n\ge 2$, let $i,j\ge 0$ with $i+j=n-1$, let $x_0\in G/\Ga$ and
$y_0\in X_{i+1}\times X_{j+1}$, and let $k\in K$.
Then the following hold.
\begin{enumerate}
    \item For every $F\in C_c(G/\Ga\times X_{i+1}\times X_{j+1})$,
    \[
    \lim_{T\to\infty}\mu_{x_0,y_0,k,T}(F)
    =
    c(k)\,\mu_{G/\Ga}\otimes\mu_{X_{i+1}}\otimes\mu_{X_{j+1}}(F).
    \]
    \item Suppose that $n\ge 3$, or that $n=2$ and $(x_0,y_0)$ is not commensurable
    in the sense of \Cref{def:commensurability}.
    Then for every $F\in C_c(G/\Ga\times X_{i+1}\times X_{j+1})$,
    \[
    \lim_{T\to\infty}\nu_{x_0,y_0,k,T}(F)
    =
    c(k)\,\mu_{G/\Ga}\otimes\mu_{X_{i+1}}\otimes\mu_{X_{j+1}}(F).
    \]
    \item Suppose that $n=2$ and that $(x_0,y_0)$ is commensurable.
    Then for every $F\in C_c(G/\Ga\times X_2)$,
    \[
    \lim_{T\to\infty}\nu_{x_0,y_0,k,T}(F)
    =
    c(k)\,\mu_{\SL^\De(2,\R)(x_0,y_0)}(F),
    \]
    where $\mu_{\SL^\De(2,\R)(x_0,y_0)}$ is the $\SL^\De(2,\R)$-invariant probability
    measure on $\SL^\De(2,\R)(x_0,y_0)$.
\end{enumerate}
\end{proposition}

\begin{proof}
We give the details for~(1); the proofs of~(2) and~(3) are identical, with the same partition-of-unity
argument and the corresponding $U$-equidistribution statement from Section~\ref{sec:equid results}.

By linearity it is enough to treat $F\ge 0$.
Fix $\e>0$.
By Corollary~\ref{cor:H component can be bounded by paying epsilon}, there exists $L>0$ such that
\begin{align*}
\mu_{x_0,y_0,k,T}(F)
&=
\frac{1}{\vol(G_T)}
\int_{H_{i,j}}\int_{b_T(k,h)}
F\bigl(a(s)hx_0,d_1(s)\hh y_{0,1},d_2(s)\vh^* y_{0,2}\bigr)\omega(s)\,ds\,dh\\
&=
\frac{1}{\vol(G_T)}
\int_{H_{i,j}}\int_{b_{T,L}(k,h)}
F\bigl(a(s)hx_0,d_1(s)\hh y_{0,1},d_2(s)\vh^* y_{0,2}\bigr)\omega(s)\,ds\,dh
+O(\e),
\end{align*}
where \(\omega(s)=\sinh(s)^i\cosh(s)^j.\)

Again by Corollary~\ref{cor:H component can be bounded by paying epsilon}, there exists $c_0>0$
such that $b_{T,L}(k,h)=\emptyset$ whenever $\|h\|\ge c_0e^L$.

Choose a symmetric identity neighborhood $O\subseteq H_{i,j}$ as in
Lemma~\ref{lem:bounding above and below bT by small O}.
Since
\[
H_{c_0e^L}:=\{h\in H_{i,j}:\|h\|\le c_0e^L\}
\]
is compact, we may cover it by finitely many translates
\begin{equation}\label{eq:cover of H compact range}
H_{c_0e^L}\subseteq \bigcup_{q=1}^N h_qO.
\end{equation}
Let $w_1,\dots,w_M\in C_c(H_{i,j})$ be a partition of unity subordinate to this cover.
Thus
\[
\sum_{l=1}^M w_l(h)=1 \quad\forall\,h\in H_{c_0e^L},
\]
and for each $l$ the support of $w_l$ is contained in some $h_{q(l)}O$.

Using this partition of unity, we write
\begin{align*}
&\int_{H_{i,j}}\int_{b_{T,L}(k,h)}
F\bigl(a(s)hx_0,d_1(s)\hh y_{0,1},d_2(s)\vh^* y_{0,2}\bigr)\omega(s)\,ds\,dh\\
&\quad=
\sum_{l=1}^M
\int_{h_{q(l)}O}w_l(h)\int_{b_{T,L}(k,h)}
F\bigl(a(s)hx_0,d_1(s)\hh y_{0,1},d_2(s)\vh^* y_{0,2}\bigr)\omega(s)\,ds\,dh.
\end{align*}
For fixed $l$, Lemma~\ref{lem:bounding above and below bT by small O} gives
\[
b_{T,L}(k,h)\subseteq b_{T(1+\e),L+1}(k,h_{q(l)})\quad\forall\,h\in h_{q(l)}O,
\]
and therefore
\begin{align*}
&\int_{h_{q(l)}O}w_l(h)\int_{b_{T,L}(k,h)}
F\bigl(a(s)hx_0,d_1(s)\hh y_{0,1},d_2(s)\vh^* y_{0,2}\bigr)\omega(s)\,ds\,dh\\
&\quad\le
\int_{h_{q(l)}O}w_l(h)\int_{b_{T(1+\e),L+1}(k,h_{q(l)})}
F\bigl(a(s)hx_0,d_1(s)\hh y_{0,1},d_2(s)\vh^* y_{0,2}\bigr)\omega(s)\,ds\,dh\\
&\quad=
\int_{b_{T(1+\e),L+1}(k,h_{q(l)})}\omega(s)\,ds\int_{h_{q(l)}O}
F\bigl(a(s)hx_0,d_1(s)\hh y_{0,1},d_2(s)\vh^* y_{0,2}\bigr)w_l(h)\,dh.
\end{align*}
We may now apply the expanding-$H_{i,j}$ equidistribution statements Proposition \ref{prop:HigherDimensionalExpendingPieces} and Proposition \ref{prop:FullEquidLowDimNonDeg} to the $h$-integral. For $T$ sufficiently large,
\begin{align*}
&\int_{b_{T(1+\e),L+1}(k,h_{q(l)})}\omega(s)\,ds\int_{h_{q(l)}O}
F\bigl(a(s)hx_0,d_1(s)\hh y_{0,1},d_2(s)\vh^* y_{0,2}\bigr)w_l(h)\,dh\\
&\quad=
(1+O(\e))
\mu_{G/\Ga}\otimes\mu_{X_{i+1}}\otimes\mu_{X_{j+1}}(F)\,
\int_{b_{T(1+\e),L+1}(k,h_{q(l)})}\omega(s)\,ds
\int_{H_{i,j}}w_l(h)\,dh.
\end{align*}
Using Lemma~\ref{lem:bounding above and below bT by small O} once more,
\begin{align*}
\int_{b_{T(1+\e),L+1}(k,h_{q(l)})}\omega(s)\,ds
\int_{H_{i,j}}w_l(h)\,dh\le
\int_{H_{i,j}} w_l(h)\int_{b_{T(1+\e)^2,L+2}(k,h)}\omega(s)\,ds\,dh.
\end{align*}
Summing over $l$ yields
\[
\mu_{x_0,y_0,k,T}(F)
\le
(1+O(\e))
\mu_{G/\Ga}\otimes\mu_{X_{i+1}}\otimes\mu_{X_{j+1}}(F)
\frac{\int_{H_{i,j}}\int_{b_{T(1+\e)^2,L+2}(k,h)}\omega(s)\,ds\,dh}
{\vol(G_T)}.
\]
The corresponding lower bound is obtained in the same way:
\[
\mu_{x_0,y_0,k,T}(F)
\ge
(1+O(\e))
\mu_{G/\Ga}\otimes\mu_{X_{i+1}}\otimes\mu_{X_{j+1}}(F)
\frac{\int_{H_{i,j}}\int_{b_{T(1-\e)^2,L-2}(k,h)}\omega(s)\,ds\,dh}
{\vol(G_T)}.
\]
By Lemma~\ref{lem:assymp of bT integral for gen case},
Corollary~\ref{cor:volume of G balls},
and Corollary~\ref{cor:H component can be bounded by paying epsilon},
both outer ratios converge to $c(k)+O(\e)$.
Since $\e>0$ is arbitrary, this proves~(1).
\end{proof}

\subsection{Proof of the main results}

We split the argument into two parts.
The proofs of \Cref{thm:non-degen orb,thm:degen orbits higher dim}
and of the non-special case of \Cref{thm:degen orbits low dim}
follow the same pattern.
The special case in \Cref{thm:degen orbits low dim} requires an additional argument to relate the limiting $\SL(2,\R)$-diagonal embedded measure to our to the limit of the lattice norm ball averages.

\subsubsection{The generic cases}

We give the details for \Cref{thm:non-degen orb} when $r\ge 2$.
The degenerate non-special case is entirely analogous, and the case $r=1$
is handled in the same way after keeping the extra $a(-\infty)$-term coming from the
$H_{0,n-1}$-decomposition.

Let $([\tilde\La_1],[\tilde\La_2])\in X$ and assume that $\tilde\La_1$ is positive definite.
Let $g_0\in G$ such that \(g_0^{-1}\tilde\La_1\subseteq P^{(0)}_1.\) Denote \(([\La_1],[\La_2]):=g_0^{-1}\cdot([\tilde\La_1],[\tilde\La_2])\) and let \(x_0:=g_0\Ga\). Choose $y_0\in X_{i+1}\times X_{j+1}$ with \(\rho^{(0)}(y_0)=([\La_1],[\La_2]).\)

Assume $i>0$.
Using \eqref{eq:reducing general integral by translating} and the Haar decomposition
\eqref{eq:Haar measure in KAHij for i nonzero}, we obtain
\begin{align}
&\frac{1}{\vol(G_T)}\int_{G_T}\varphi(g\Ga)\,f\bigl(g\cdot([\tilde\La_1],[\tilde\La_2])\bigr)\,dg \nonumber\\
&\quad=
\int_K
\frac{1}{\vol(G_T^{g_0})}
\int_{H_{i,j}}\int_{b_T^{g_0}(k,h)}
\varphi\bigl(ka(s)hx_0\bigr)\,
f\bigl(ka(s)h\cdot([\La_1],[\La_2])\bigr)\,
\omega(s)\,ds\,dh\,dk,
\label{eq:main proof nondeg}
\end{align}
where
\[
b_T^{g_0}(k,h):=\{s\ge 0:\|ka(s)hg_0^{-1}\|\le T\},
\quad
\omega(s)=\sinh(s)^i\cosh(s)^j.
\]

Apply Lemma~\ref{lem:replacing by averages in GmodD times expading in XiXj}
and Proposition~\ref{prop:convergence of inner integrals} to the skewed norm
$\|\cdot\|_{g_0}$.
The dominated convergence theorem then gives
\begin{equation}\label{eq:generic limit after K integration}
\lim_{T\to\infty}
\frac{1}{\vol(G_T)}\int_{G_T}\varphi(g\Ga)\,f\bigl(g\cdot([\tilde\La_1],[\tilde\La_2])\bigr)\,dg
=
\int_K c_{g_0}(k)\,(k\Psi)_*
\bigl(\mu_{G/\Ga}\otimes\mu_{X_{i+1}}\otimes\mu_{X_{j+1}}\bigr)(\varphi\otimes f)\,dk,
\end{equation}
where
\[
c_{g_0}(k_0):=
\frac{\int_{H_{i,j}}\|k_0a(\infty)hg_0^{-1}\|^{-(n-1)}\,dh}
{\int_K\int_{H_{i,j}}\|ka(\infty)hg_0^{-1}\|^{-(n-1)}\,dh\,dk}, \; \textup{for }k_0\in K.
\]
Since
\[
(k\Psi)_*
\bigl(\mu_{G/\Ga}\otimes\mu_{X_{i+1}}\otimes\mu_{X_{j+1}}\bigr)(\varphi\otimes f)
=
\mu_{G/\Ga}(\varphi)\,\mu_{X_{k\cdot\Pinfty}}(f),
\]
we obtain
\[
\lim_{T\to\infty}
\frac{1}{\vol(G_T)}\int_{G_T}\varphi(g\Ga)\,f\bigl(g\cdot([\tilde\La_1],[\tilde\La_2])\bigr)\,dg
=
\mu_{G/\Ga}(\varphi)\int_K \mu_{X_{k\cdot\Pinfty}}(f)\,c_{g_0}(k)\,dk.
\]
Finally, $\mu_{X_{k\kappa\cdot\Pinfty}}=\mu_{X_{k\cdot\Pinfty}}$ for every $\kappa\in K_{\Pinfty}$, so
\[
\int_K \mu_{X_{k\cdot\Pinfty}}(f)\,c_{g_0}(k)\,dk
=
\int_{K/K_{\Pinfty}}
\mu_{X_{k\cdot\Pinfty}}(f)
\left(\int_{K_{\Pinfty}}c_{g_0}(k\kappa)\,d\kappa\right)\,d(kK_{\Pinfty}).
\]
This is exactly the density appearing in \Cref{thm:non-degen orb}.
Applying \Cref{prop:reduction general method} finishes the proof.

\subsubsection{The special case in \Cref{thm:degen orbits low dim}}

Let $\Ga\subseteq G$ be a lattice, and let $\La$ be a degenerate $2$-lattice which is $\Ga$-special.
Choose $k_{\theta_0}\in K$ such that \(\La\subseteq k_{\theta_0}P^\infty\).

Using \eqref{eq:reducing general integral by translating} and the Haar decomposition
\eqref{eq:Haar measure in KAU}, we obtain
\begin{align}
&\frac{1}{\vol(G_T)}\int_{G_T}\varphi(g\Ga)\,f(g\cdot[\La])\,dg\nonumber\\
&=
\int_0^{2\pi}
\frac{1}{\vol(G_T^{k_{\theta_0}})}
\int_{\R}\int_{b_T^{\theta_0}(\theta,u(t))}
\varphi\bigl(k_\theta a(s)u(t) k_{\theta_0}^{-1}\Ga\bigr)\,
f\bigl(k_{\theta} a(s)u(t)k_{\theta_0}^{-1}\cdot[\La]\bigr)\,
e^s\,ds\,dt\,d\theta\nonumber\\
&=
\int_0^{2\pi}
\frac{1}{\vol(G_T^{k_{\theta_0}})}
\int_{\R}\int_{b_T^{\theta_0}(\theta,u(t))}
\varphi\bigl(k_\theta\Phi(\de(s)\ups(t)) k_{\theta_0}^{-1}\Ga\bigr)\,
f\bigl(k_{\theta-\theta_0}\cdot[\rho_{\theta_0}^{\infty}(\de(s)\ups(t))\La]\bigr)\,
e^s\,ds\,dt\,d\theta\nonumber,
\end{align}
where $\Phi:\SL(2,\R)\to G$ is the isogeny characterized by \eqref{eq:CharacterisationOfIsogeny}, where  recalling \eqref{eq:DefOfRhoInftyTheta},
\[
\rho_{\theta_0}^{\infty}(g):=k_{\theta_0}\rho^\infty(g)k_{\theta_0}^{-1},
\quad g\in \SL(2,\R),
\]
and where \(b_T^{\theta_0}(\theta,u(t))
:=
\{s\ge 0:\|k_\theta a(s)u(t)k_{\theta_0}^{-1}\|\le T\}\).

Since $\La$ is $\Ga$-special, the subgroup \(S:=\Ga_{\theta_0}^\Phi \cap \stab_{\theta_0}([\La])\) is a lattice in $\SL(2,\R)$; see Remark \ref{rem:StabIslatticeForSpecialOrbits}.
Set \(x_0:=k_{\theta_0}^{-1}\Ga\), and choose \(y_0\in \SL(2,\R)/\SL(2,\Z)\) such that \(\rho_{\theta_0}^\infty(y_0)=[\La]\).

Then \((x_0,y_0)\) is commensurable in the sense of \Cref{def:commensurability}. Hence,
Lemma~\ref{lem:replacing by averages in GmodD times expading in XiXj, unipotent case}
and Proposition~\ref{prop:convergence of inner integrals},
applied with respect to the skewed norm $\|\cdot\|_{k_{\theta_0}}$, yield
\begin{equation}\label{eq:special orbit limit on closed orbit rewritten}
\lim_{T\to\infty}
\frac{1}{\vol(G_T)}\int_{G_T}\varphi(g\Ga)\,f(g\cdot[\La])\,dg
=
\int_0^{2\pi}
w(\theta)\,
(k_\theta\Psi^\infty)_*\mu_Y(\varphi\otimes f)\,d\theta,
\end{equation}
where
\[Y:=\SL^\De(2,\R)(x_0,y_0)=\bigl\{(k_{\theta_0}\Phi(g)k_{\theta_0}^{-1}\Ga,[\rho_{\theta_0}^\infty(g)\La]):g\in\SL(2,\R)\bigr\},\]
where  $\mu_Y$ is the $\SL(2,\R)$-invariant probability on $Y$, and
\[
w(\vartheta):=
\frac{\int_\R \|k_\vartheta a(\infty)u(t)k_{\theta_0}^{-1}\|^{-1}\,dt}
{\int_0^{2\pi}\int_\R \|k_\theta a(\infty)u(t)k_{\theta_0}^{-1}\|^{-1}\,dt\,d\theta}\,.
\]

Identifying $Y$ with $\SL(2,\R)/S$, and writing $d\bar\eta$ for the $\SL(2,\R)$-invariant probability measure on $\SL(2,\R)/S$, we get
\begin{align*}
(k_\theta\Psi^\infty)_*\mu_Y(\varphi\otimes f)
&=
\int_{\SL(2,\R)/S}
\varphi\bigl(k_\theta\Phi(\bar\eta)k_{\theta_0}^{-1}\Ga\bigr)\,
f\bigl(k_\theta\cdot[\rho^\infty(\bar\eta)k_{\theta_0}^{-1}\La]\bigr)\,d\bar\eta\\
&=
\int_{\SL(2,\R)/S}
\varphi\bigl(k_{\theta_0}\Phi(\bar\eta)k_{\theta_0}^{-1}\Ga\bigr)\,
f\bigl(k_{\theta-\theta_0}\cdot[\rho_{\theta_0}^\infty(\kappa_{-(\theta-\theta_0)/2}\bar\eta)\La]\bigr)\,d\bar\eta,
\end{align*}
where we used the identity \(k_\vartheta=\Phi(\kappa_{\vartheta/2})\) and the left-invariance of \(d\bar\eta\).

Let \(S_1:=\Ga_{\theta_0}^\Phi,\) choose representatives \(\sigma_1,\dots,\sigma_m\in S_1\) for the finite quotient \(S_1/S\), and define
\begin{equation}\label{eq:SpecialOrbit in the final proof}
[\La_i]:=[\rho_{\theta_0}^\infty(\sigma_i)\La],\quad i=1,\dots,m.
\end{equation}
Then \(\{[\La_1],\dots,[\La_m]\}\) is precisely the \((\Ga,\La)\)-packet.

Consider the first-coordinate projection \(p:Y\to G/\Ga.\) Under the identifications \(Y\simeq \SL(2,\R)/S\) and \(G/\Ga\simeq \SL(2,\R)/S_1\), the map \(p\) is the natural quotient map \(\SL(2,\R)/S\to \SL(2,\R)/S_1\). Since \([S_1:S]=m\), it is an \(m\)-sheeted finite covering.

Choose a sufficiently small identity neighborhood \(U\subseteq G\) and a bump function
\(\varphi\in C_c(G/\Ga)\) supported on \(U\Ga/\Ga\) with \(\mu_{G/\Ga}(\varphi)=1\).
Shrinking \(U\) if necessary, 
\[
p^{-1}(U\Ga/\Ga)=Y_1\sqcup\cdots\sqcup Y_m,
\]
where each restriction \(p|_{Y_i}:Y_i\to U\Ga/\Ga\) is a homeomorphism, and we may arrange that \(\sigma_iS\in Y_i\).

Moreover, each sheet carries mass \(1/m\) with respect to \(\varphi\circ p\):
\begin{equation}\label{eq:sheet mass one over m}
\int_{Y_i}\varphi\circ p\,d\mu_Y=\frac1m,
\quad i=1,\dots,m.
\end{equation}
Indeed, after lifting to a sufficiently small neighborhood \(\widetilde U\subseteq \SL(2,\R)\), the sheets have the form
\[
\widetilde U\sigma_iS/S,\quad i=1,\dots,m,
\]
and each \(\widetilde U\sigma_i\) projects injectively to both
\(\SL(2,\R)/S\) and \(\SL(2,\R)/S_1\). Since Haar measure on \(\SL(2,\R)\) is right-invariant, the corresponding sheet integrals are equal. Summing over \(i\) gives
\[
\sum_{i=1}^m\int_{Y_i}\varphi\circ p\,d\mu_Y
=
\int_Y\varphi\circ p\,d\mu_Y
=
\mu_{G/\Ga}(\varphi)
=
1,
\]
which proves \eqref{eq:sheet mass one over m}.

Fix \(\e>0\). By uniform continuity of \(f\) and compactness of \(\theta\in[0,2\pi]\), after shrinking \(U\) further if necessary, we may assume that uniformly in \(\theta\) and uniformly for \(\bar\eta\in Y_i\),
\[
f\bigl(k_{\theta-\theta_0}\cdot[\rho_{\theta_0}^\infty(\kappa_{-(\theta-\theta_0)/2}\bar\eta)\La]\bigr)
=
f\bigl(k_{\theta-\theta_0}\cdot[\rho_{\theta_0}^\infty(\kappa_{-(\theta-\theta_0)/2}\sigma_i)\La]\bigr)
+O(\e).
\]
Therefore,
\begin{align*}
(k_\theta\Psi^\infty)_*\mu_Y(\varphi\otimes f)
&=
\sum_{i=1}^m
\int_{Y_i}
\varphi\bigl(k_{\theta_0}\Phi(\bar\eta)k_{\theta_0}^{-1}\Ga\bigr)\,
f\bigl(k_{\theta-\theta_0}\cdot[\rho_{\theta_0}^\infty(\kappa_{-\theta-\theta_0)/2}\bar\eta)\La]\bigr)\,d\bar\eta\\
&=
\sum_{i=1}^m
f\bigl(k_{\theta-\theta_0}\cdot[\rho_{\theta_0}^\infty(\kappa_{-\theta-\theta_0)/2}\sigma_i)\La]\bigr)
\int_{Y_i}\varphi\bigl(k_{\theta_0}\Phi(\bar\eta)k_{\theta_0}^{-1}\Ga\bigr)\,d\bar\eta
+O(\e)\\
&=
\frac1m\sum_{i=1}^m
f\bigl(k_{\theta-\theta_0}\cdot[\rho_{\theta_0}^\infty(\kappa_{-(\theta-\theta_0)/2}\sigma_i)\La]\bigr)
+O(\e).
\end{align*}
Substituting this into \eqref{eq:special orbit limit on closed orbit rewritten}, we obtain
\[
\lim_{T\to\infty}
\frac{1}{\vol(G_T)}\int_{G_T}\varphi(g\Ga)\,f(g\cdot[\La])\,dg
=
\frac1m\sum_{i=1}^m\int_0^{2\pi}
f\bigl(k_{\theta-\theta_0}\cdot[\rho_{\theta_0}^\infty(\kappa_{-(\theta-\theta_0)/2}\sigma_i)\La]\bigr)
\,w(\theta)\,d\theta
+O(\e).
\]
After the change of variables $\phi=\theta-\theta_0$ and by $2\pi$-periodicity, this is exactly the formula in \Cref{thm:degen orbits low dim}\eqref{enu:special case nondeg}, since $w(\phi+\theta_0)=w_{\theta_0}(\phi+\theta_0)$ by definition.

Since \(\e>0\) is arbitrary, \Cref{prop:reduction general method} now implies
\Cref{thm:degen orbits low dim} in the special case.


\end{document}